\documentclass[12pt]{article}

\usepackage{fullpage}
\let\hat\widehat

\usepackage[pdfencoding=auto, psdextra]{hyperref}
\hypersetup{
	colorlinks,
	citecolor=blue,
	filecolor=black,
	linkcolor=black,
	urlcolor=black
}
\usepackage{bookmark}
\bookmarksetup{depth=2}  % Set depth to exclude paragraphs

\usepackage{amsmath}
\usepackage{amssymb}
\usepackage{amsthm}
\usepackage{derivative}
\usepackage{mathtools} % provides dcases
\usepackage{thmtools,thm-restate}
\mathtoolsset{showonlyrefs=true}
\usepackage{enumitem} % list with letters

\usepackage{graphicx}
\graphicspath{ {./img/} }
\usepackage{caption}
\usepackage{subcaption} % for subfigures
\usepackage{float}

\usepackage{etoc}

\usepackage{natbib}
\theoremstyle{plain}
\newtheorem{theorem}{Theorem}
\newtheorem{lemma}{Lemma}
\newtheorem*{lemma*}{Lemma}

\newtheorem{proposition}{Proposition}
\newtheorem{definition}{Definition}
\newtheorem{property}{Property}
\newtheorem*{property*}{Property}
\newtheorem{remark}{Remark}
\DeclareMathOperator{\iid}{\stackrel{iid}{\sim}}
\DeclareMathOperator{\st}{\text{ s.t. }}
\DeclareMathOperator{\period}{\text{.}}

\DeclareMathOperator{\vs}{\text{ versus }}
\DeclareMathOperator{\where}{\text{ where }}

\DeclareMathOperator{\textif}{\text{if }}

\DeclareMathOperator{\textand}{\text{ and }}
\DeclareMathOperator{\textor}{\text{ or }}
\DeclareMathOperator{\for}{\text{ for }}
\DeclareMathOperator{\Corr}{CS}

\DeclareMathOperator{\Rank}{rank}
\DeclareMathOperator{\Span}{span}
\DeclareMathOperator{\supp}{supp}

\DeclareMathOperator*{\op}{op}
\DeclareMathOperator*{\proj}{Proj}
\DeclareMathOperator*{\intrinsic}{Int}

\DeclareMathOperator*{\trace}{Tr}

\DeclareMathOperator*{\arginf}{arg\,inf}

\let\norm\relax
\newcommand{\norm}[1]{\lVert #1 \rVert}
\newcommand{\inner}[2]{\left\langle #1 , #2 \right\rangle}

\newcommand{\Normal}{\mathcal{N}}

\newcommand{\Epsilon}{\mathcal{E}}
\newcommand{\R}{\mathbb{R}} % reals

\usepackage{bbm} % for indicator functions
\usepackage[bb=boondox]{mathalfa} % bb for numbers
\usepackage[bbgreekl]{mathbbol} % bb for greek letters

\DeclareMathOperator{\LR}{LR}

\begin{document}

\begin{center}
{\bf{\Large{Optimal Inference with Black-box Predictions}}}

\vspace*{.2in}

\begin{tabular}{cc}
Lucas Kania$^{\dagger,*}$
& Abhinav Chakraborty$^{\diamond,*}$
\end{tabular}
% \vspace{.12in}
\begin{tabular}{ccc}
Edward Kennedy$^{\S}$
& Larry Wasserman$^{\S, \ddagger}$
& Sivaraman Balakrishnan$^{\S,\ddagger}$
\end{tabular}
% \end{adjustwidth}

\vspace{.14in}

\begin{tabular}{c}
    $^\dagger$Department of Statistics and Data Science, University of Pennsylvania\\
    $^\diamond$Department of Statistics, Columbia University\\
    $^\S$Department of Statistics and Data Science, Carnegie Mellon University\\
	$^\ddagger$Machine Learning Department, Carnegie Mellon University \\[0.12in]
\end{tabular}
\begin{tabular}{cc}
    \texttt{lkania@upenn.edu},
    \texttt{ac4662@columbia.edu},
     \texttt{\{edward,larry,siva\}@stat.cmu.edu}
\end{tabular}
% \phantom{\footnote{Email}}

\vspace{.14in}

\today

\end{center}

\begin{abstract}
Powerful black-box predictive models have motivated many proposals for combining observed data with predictions to perform valid statistical inference. Despite this progress, the field lacks a unifying principle that explains how hypothesis tests should integrate data and predictions in a way that is both valid and efficient. In this work, we address this gap in the high-dimensional Gaussian sequence model. We characterize the information-theoretic limits of inference with black-box predictions when their accuracies are known and, for orthogonal predictions, when they are unknown. Building on these characterizations, we develop practical hypothesis tests that adapt to the unknown accuracies of the predictions while benefiting from strong alignment among them.
\end{abstract}

\begingroup
\renewcommand\thefootnote{*}
\footnotetext[1]{These authors contributed equally.}
\endgroup

\etocdepthtag.toc{mtchapter}
\etocsettagdepth{mtchapter}{section}
\etocsettagdepth{mtappendix}{none}
\etocsettagdepth{mtreferences}{section}
{
\normalsize
\parskip=0em
\renewcommand{\contentsname}{\normalsize Table of contents}
\tableofcontents
}

\pagebreak

\section{Introduction}

Modern scientific research often centers on evaluating whether available data supports or contradicts a given hypothesis. Statistical hypothesis testing provides a structured framework for addressing this question. At a high level, practitioners specify a reference distribution and apply a test to determine whether the data deviates from this baseline. Each test, either implicitly or explicitly, targets a specific kind of deviation and is designed to detect whether the data aligns more closely with an alternative distribution.

Early developments in parametric and semi-parametric statistics emphasized the design of tests with strong detection power against a small set of well-specified alternative distributions \citep{bickelTailormadeTestsGoodness2006,janssenGlobalPowerFunctions2000}. However, even minor deviations from the assumed alternative distributions can substantially reduce power \citep{breimanStatisticalModelingTwo2001}. To overcome this limitation, researchers developed nonparametric tests that guarantee detection power across broad classes of alternatives. Yet, spreading power across infinitely many possibilities often weakens performance.

In practice, effective testing strategies should strike a balance between these extremes: concentrating power on alternatives that the data suggests are plausible, while spreading power broadly when no strong alternative is apparent. One promising direction involves assumption-lean methods that leverage black-box predictions to concentrate power on data-driven alternatives adaptively.

Notable examples include prediction-powered inference \citep{angelopoulosPredictionPoweredInference2023,zrnic2024cross}, dimension-agnostic inference for M-estimators \citep{kimDimensionagnosticInferenceUsing2024,takatsuBridgingRoot$n$Nonstandard2025}, and universal inference \citep{wassermanUniversalInference2020}. These methods construct tests that integrate observed data with black-box approximations of the data distribution. Many rely on likelihood-ratio tests, either directly or via classifier-based approximations, as in likelihood-free inference \citep{dalmassoLikelihoodFreeFrequentistInference2023,gerberKernelBasedTestsLikelihoodFree2023}. However, there is no consensus on how to best fuse black-box predictions with empirical data.

In this work, we study how tests can optimally integrate black-box predictions of the data distribution with observed data in the Gaussian sequence model. Most importantly, we focus on test procedures that adapt to the unknown quality of the predictions. These adaptive tests automatically adjust their power allocation: they concentrate power on a narrow range of alternative distributions when the predictions are informative, and distribute it more broadly when the predictions are unreliable.

\noindent \textbf{Outline.} Section \ref{sec:overview_blackbox} summarizes our main results. Section \ref{sec:minimax} develops the minimax framework that we use to study adaptation to predictions with unknown quality. Section \ref{sec:oracle} characterizes the optimal performance of an oracle that knows the accuracy of each prediction. Section \ref{sec:orthogonal} then derives an optimal test that adapts to the unknown accuracy of orthogonal predictions. Section \ref{sec:arbitrary} extends this approach to arbitrary predictions. Finally, Section \ref{sec:simulations_blackbox} illustrates our results through simulations, and Section \ref{sec:discussion_blackbox} places our results in the broader context of the field and identifies directions for future research.

\noindent \textbf{Notation.} We write $a_n \lesssim b_n$ if there exists a positive constant $C$ such that $a_n \leq C \cdot b_n$ for all sufficiently large $n$. Analogously, $a_n \asymp b_n$ denotes that $a_n \lesssim b_n$ and $b_n \lesssim a_n$. We use $[k]=\{1,\dots,k\}$ to denote the set of the first $k$ natural numbers. Furthermore, we use the notation $I(\text{condition})$ to denote the indicator function that equals 1 if the condition is true and 0 otherwise. Whenever clear from context, we omit stating the random variable in expectations and variances: $E_{P}[T] =  E_{X\sim P}[T(X)]$ and $V_{P}[T] = V_{X\sim P}[T(X)]$. Furthermore, let $B_2^d(a,r)$ denote the $d$-dimensional Euclidean ball of radius $r$ centered at $a \in \R^d$: $B_2^d(a,r) = \{\theta \in \R^d: \norm{\theta-a}_2 \leq r\}$, and $S_2^{d-1}(a,r)$ denote the surface of the sphere of radius $r$ centered at $a \in \R^d$: $S_2^{d-1}(a,r) = \{\theta \in \R^d: \norm{\theta-a}_2 = r\}$. Throughout the paper, we use the inequality sign "$\geq$" in different contexts. Namely, for vectors $a,b \in R^d$, we state that $a \geq b$ if the inequality is satisfied entry-wise, while for functions $f,g$ whose domains are the reals, we state that $f\geq g$ if $f(r)\geq g(r)$ for all $r \in \R$. Finally, we let $\Corr(a,b)$ denote the cosine similarity between vectors $a,b\in \R^d$, that is, $\Corr(a,b)=\inner{a}{b}/(\norm{a}_2\cdot \norm{b}_2)$ if $a\not=0$ and $b\not=0$, where $\inner{a}{b}=a^Tb$ is the inner product between the vectors. If either of the vectors is zero, we let $\Corr(a,b)=0$.

\begin{figure}[t!]
\centering
\begin{subfigure}[t]{0.32\textwidth}
\centering
\includegraphics[width=\linewidth]{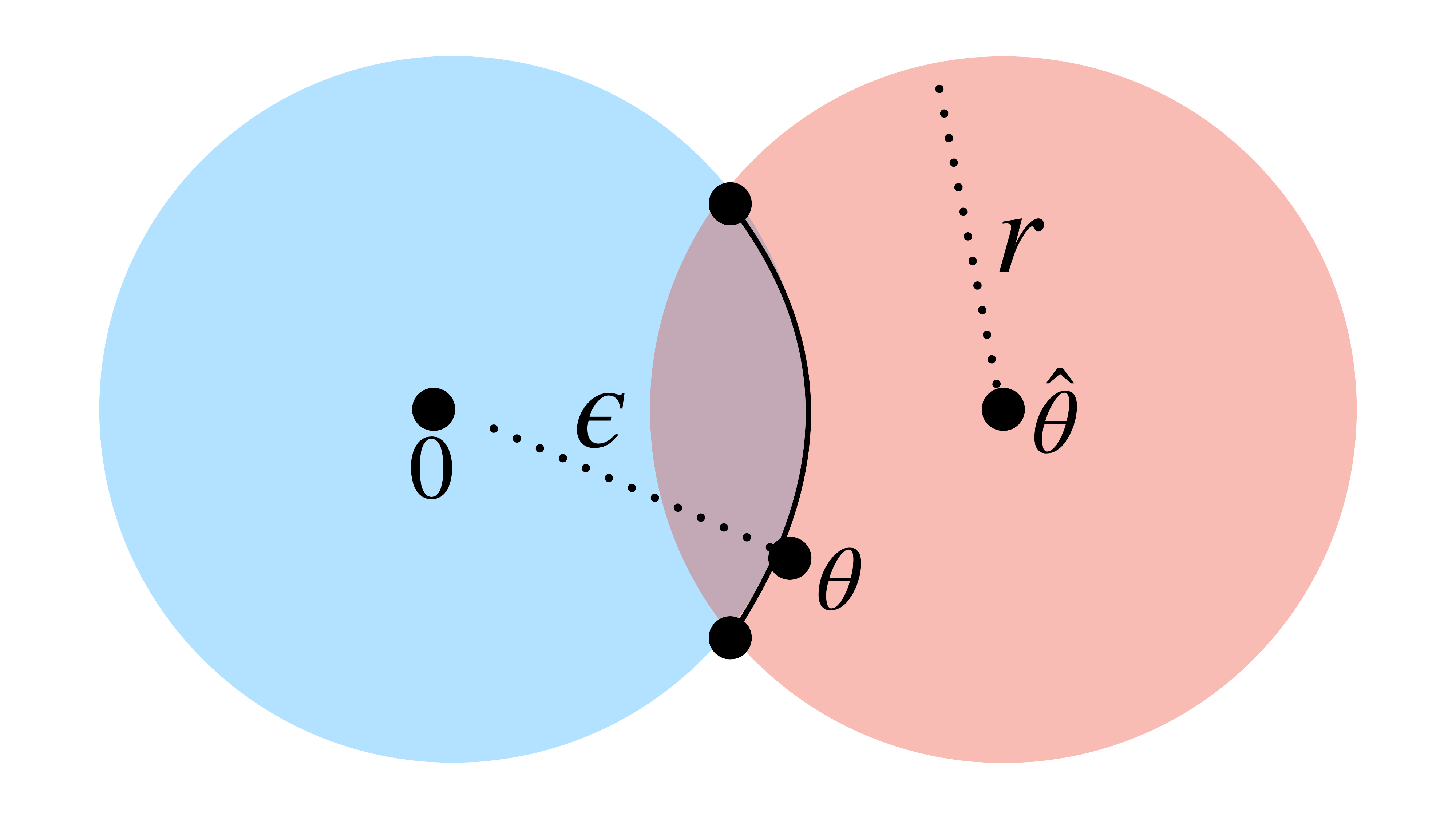}
\caption{Pictorial representation of distributions under the alternative hypothesis.}
\label{fig:intersection}
\end{subfigure}
\begin{subfigure}[t]{0.32\textwidth}
\centering
\includegraphics[width=0.8\linewidth]{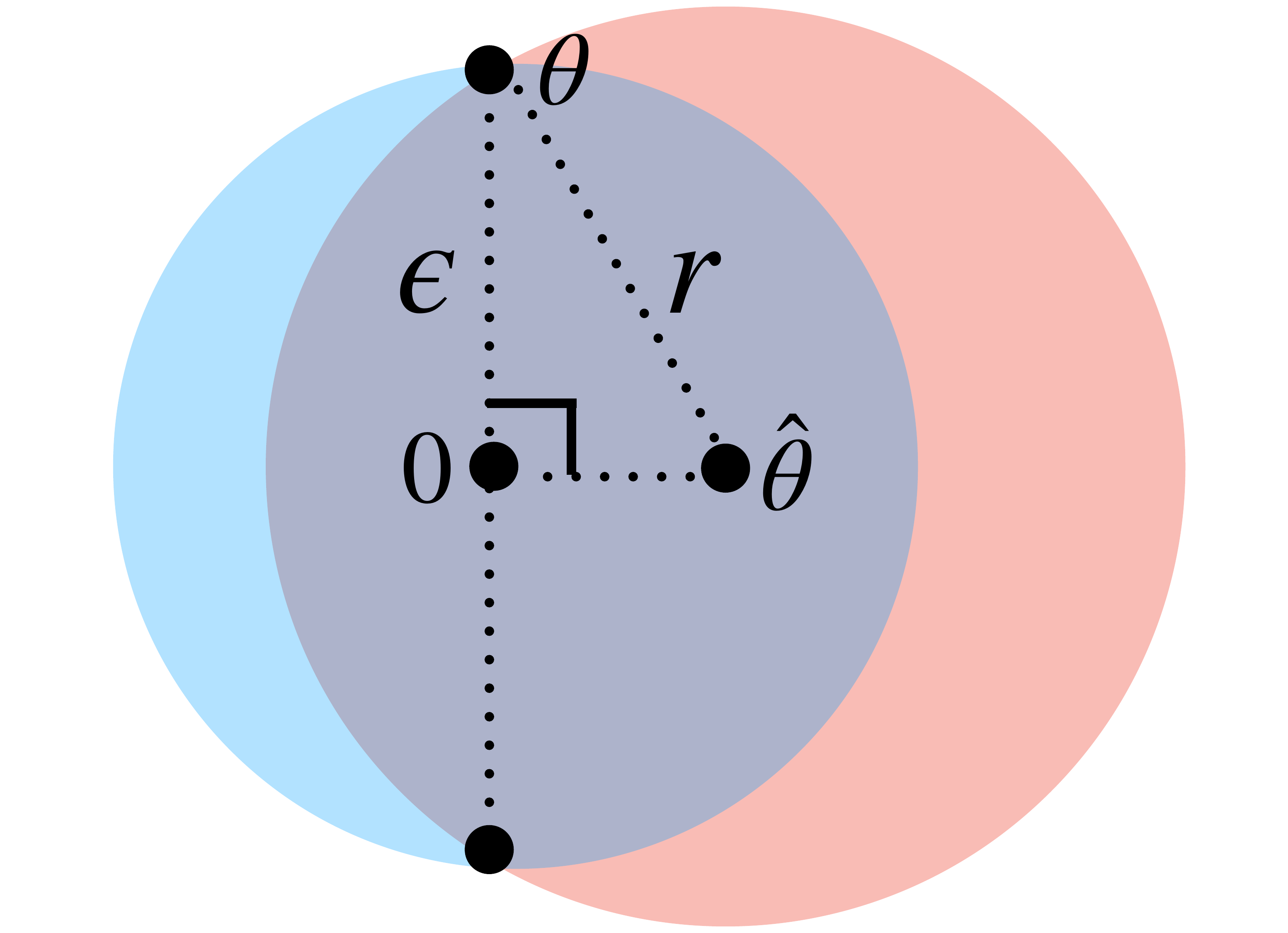}
\caption{Exactly half of $S_\epsilon$ is contained in $B_r$.}
\label{fig:half}
\end{subfigure}
\caption{Illustrative scenarios under the alternative hypothesis.}
\end{figure}

\subsection{Overview of main results}\label{sec:overview_blackbox}

We study how to optimally aggregate data with auxiliary black-box predictions in the Gaussian sequence model. This model provides a canonical setting for minimax hypothesis testing and a useful benchmark for more general nonparametric regression problems~\citep{ingsterNonparametricGoodnessofFitTesting2003,johnstone2019}. In this setting, one
is given access to $n$ observations from a standard $d$-dimensional Gaussian distribution $X_1,\dots,X_n \sim \Normal(\theta,I_d)$ with some unknown mean $\theta \in \mathbb{R}^d$, and $m$ predictions of the mean $\{\hat{\theta}_1,\dots,\hat{\theta}_m\}$ whose accuracy is unknown. The goal is to detect whether the mean of the data is zero ($\theta=0$), or far from it.  In this work, \begin{itemize}
\item We characterize the optimal testing strategy of an oracle test that has access to the unknown predictions' accuracies (Theorems \ref{thm:oracle_rates_single_prediction} and \ref{thm:oracle_rates_orthogonal_predictions}).
\item For orthogonal predictions, we develop a test that adapts to the unknown accuracy of the predictions and establish its optimality (Theorems \ref{thm:upperbound_adaptive_test_orthogonal_predictions} and \ref{thm:lowerbound_adaptive_test_orthogonal_predictions}).
\item For arbitrary predictions, we develop a test that adapts to the unknown accuracy of the predictions while benefiting from high cosine similarity among the predictions (Theorem \ref{thm:upperbound_adaptive_test_arbitrary_predictions}). This case is practically relevant because predictions often come from the same black-box predictive model and are not orthogonal.
\end{itemize}

% Finally, we remark that our results show that although functional estimation requires access to high-accuracy predictions, testing can use lower-accuracy predictions insofar as they have high cosine similarity with the mean of the underlying data distribution. %We further elaborate on the consequences of this fact for future research directions in Section \ref{sec:discussion_blackbox}. 

Throughout the paper, $X$ denotes the sample mean of the $n$ observations, so that $X \sim \mathcal{N}(\theta,I_d/n)$, where $I_d$ is the $d \times d$ identity matrix. We also assume that every prediction is nonzero, i.e., $\hat{\theta}_i \neq 0$ for all $i \in [m]$. Let us first consider the case where we do not have access to any black-box prediction. That is, one must decide if $\theta=0$ or if it is far away from the origin as measured by the $\ell_2$ norm: \begin{equation}\label{eq:gsn_seq_expo_large_r}
H_0: \theta = 0 \vs H_1: \norm{\theta}_2 \geq \epsilon.
\end{equation} It has been known since the work of \citet{ingsterAsymptoticallyMinimaxHypothesisI1993} that the hypotheses can be distinguished with nontrivial power if and only if $n \gtrsim \sqrt{d}\cdot \epsilon^{-2}$, which we call the prediction-free rate. The optimal test is based on the chi-squared test statistic: \begin{equation}\label{eq:expo_chi_squared_test}
\text{ reject $H_0$ if } \norm{X}_2^2 \text{  is large,}
\end{equation} which intuitively indicates that the strong dependence on the dimension of the data arises due to the optimal test checking for deviations from the null hypothesis in all directions.

In this work, we show that arbitrary predictions can substantially improve the prediction-free rate without requiring strong assumptions about their quality. Informative predictions help the test focus on a smaller set of directions where the null distribution may fail, which increases power when the predictions are reliable. Poor predictions, however, can point the test in unhelpful directions and reduce its effectiveness. We address this tension with tests that adapt to the unknown accuracy of the predictions, which we call \textit{adaptive tests}. As a result, the tests improve over the prediction-free rate when predictions are accurate, while still recovering that rate when predictions are uninformative.

Throughout this work, we consider black-box predictions as fixed non-random objects, which is commonplace in the model selection literature \citep{juditsky2000,nemirovski2000,tsybakov2003optimal}. Alternatively, one can adopt a conditional point of view \citep{balakrishnanFundamentalLimitsStructureAgnostic2023}, in which predictions are obtained from an independent sample of the data. Regardless of the chosen point of view, the focus of this work is on instance-wise optimality, that is, the optimal use of given fixed predictions with an unknown fixed accuracy. That is, we do not exploit any statistical guarantees of the process that generates the predictions, since such guarantees are rarely available in practice.  

To understand how a prediction can improve inference, let us first consider an oracle setting where we have access to a single black-box prediction $\hat{\theta} \in \R^d$, and we know its accuracy:
$\norm{\theta-\hat{\theta}}_2 \leq r$. We can incorporate the additional information in the hypotheses \eqref{eq:gsn_seq_expo_large_r} as follows:\begin{equation}\label{eq:hypotheses_oracle_single_prediction_expo}
H_0: \theta=0,
\vs H_1: \norm{\theta}_2 \geq \epsilon \textand \norm{\theta-\hat{\theta}}_2  \leq r.
\end{equation} Note that the accuracy of the prediction is not used in the definition of the null hypothesis, which implies that a test must be valid regardless of the quality of a black-box prediction. This setting parallels the attractive feature of methods such as universal or prediction-powered inference \citep{wassermanUniversalInference2020,angelopoulosPredictionPoweredInference2023}, whose validity does not depend on the quality of the predictions. Nevertheless, note that the prediction plays a key role in localizing the alternative hypothesis, which implies that the power of any test that distinguishes the hypotheses \eqref{eq:hypotheses_oracle_single_prediction_expo} must be affected by the accuracy of the prediction.

\begin{figure}[t!]
\centering
\begin{subfigure}[t]{0.31\textwidth}
  \centering
  \includegraphics[width=\linewidth]{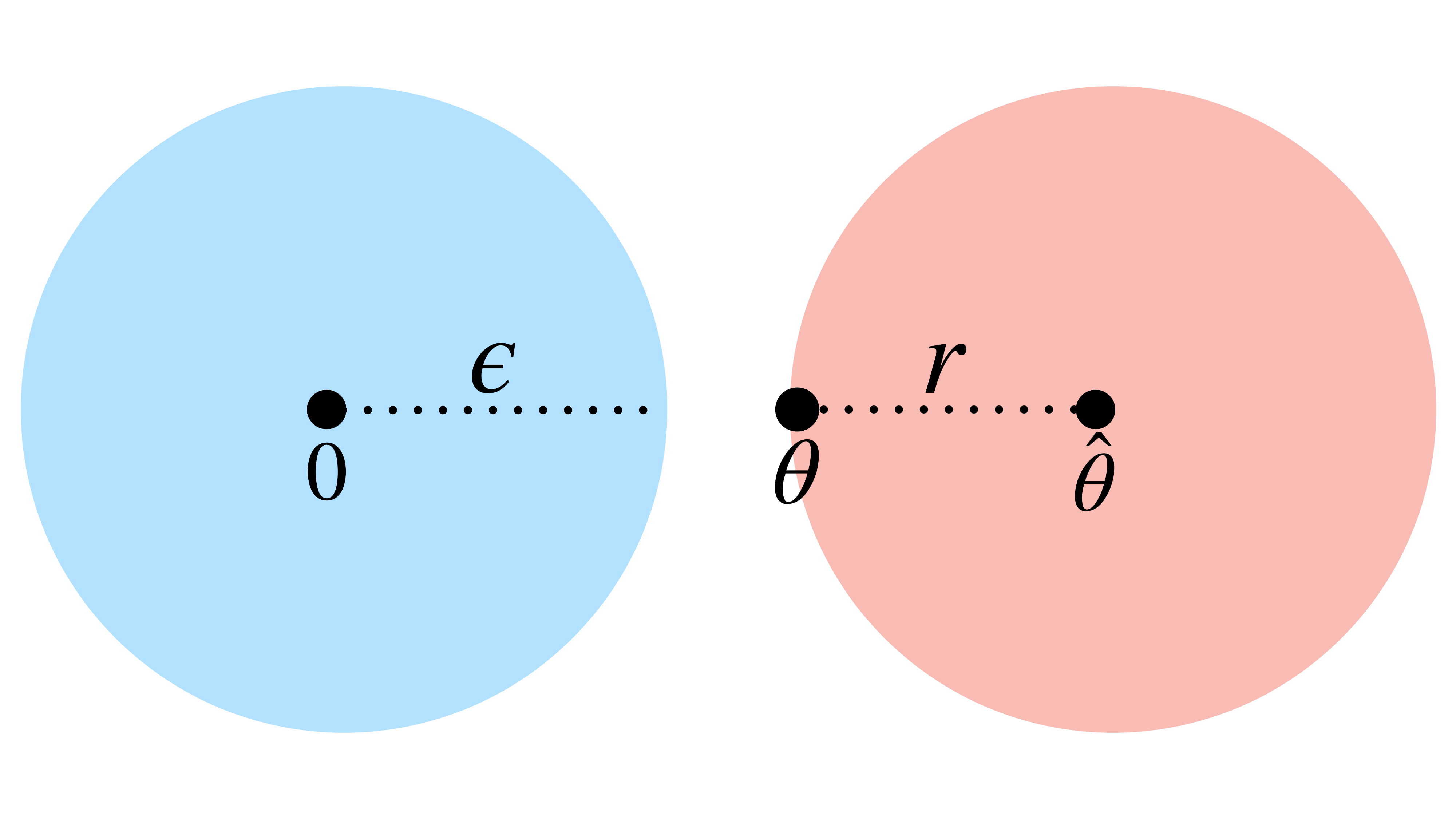}
  \caption{Extremely informative prediction when $S_\epsilon \cap B_r = \emptyset$.}
  \label{fig:disjoint}
\end{subfigure}%\hfill
\begin{subfigure}[t]{0.31\textwidth}
  \centering
  \includegraphics[width=\linewidth]{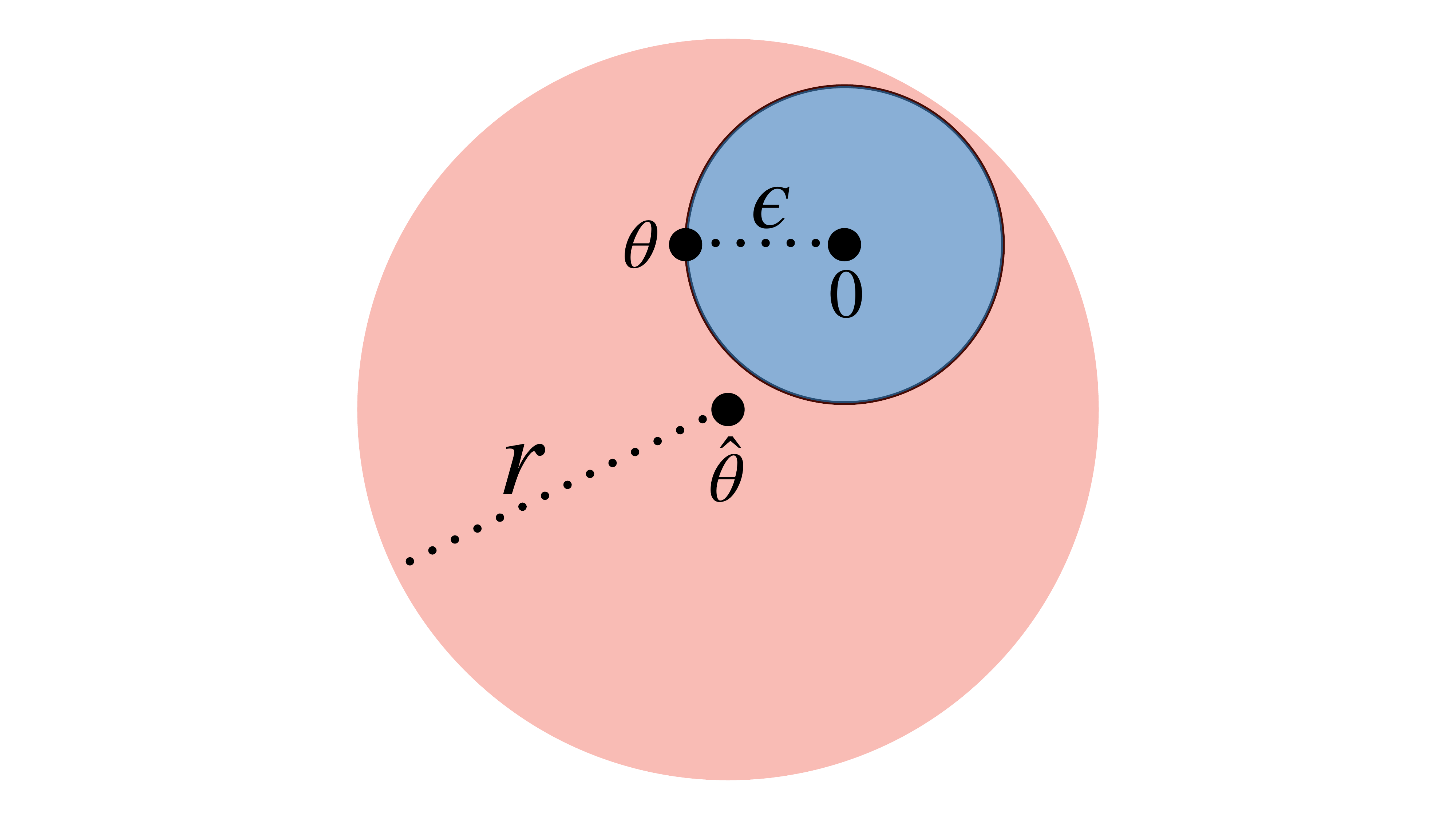}
  \caption{Uninformative prediction when $S_\epsilon \subset B_r$.}
  \label{fig:uninformative}
\end{subfigure}
\caption{Illustrative scenarios under the alternative hypothesis.}
\end{figure}

To intuitively understand how a prediction might change the hardness of the testing problem, let us consider an adversary that attempts to confuse a tester by choosing data distributions under the alternative hypothesis that are as close as possible to the null distribution. Going back to the alternative hypothesis in \eqref{eq:gsn_seq_expo_large_r}, without a prediction, an adversary may choose any point on the surface of the sphere $S_2^{d-1}(0,\epsilon)$, denoted by $S_\epsilon$ henceforth. A test must therefore search for deviations from the null hypothesis in every direction on this sphere. 

A prediction restricts this geometry. Under the alternative hypothesis of \eqref{eq:hypotheses_oracle_single_prediction_expo}, the mean of the data distribution must belong to the ball $B_2^d(\hat{\theta},r)$, denoted by $B_r$ henceforth. Thus, in the worst case, the adversary can only choose points in $S_\epsilon \cap B_r$, a spherical cap of $S_\epsilon$, as shown in Figure \ref{fig:intersection}. As this cap shrinks, testing becomes easier because fewer distributions under the alternative hypothesis are closer to the null distribution. Thus, a tester must look into fewer directions to detect deviations from the null distribution.

Let us consider two extreme cases to further clarify the role of the prediction. If $S_\epsilon \subseteq B_r$, see Figure \ref{fig:uninformative}, the prediction does not restrict the adversary. That is, hypotheses \eqref{eq:hypotheses_oracle_single_prediction_expo} reduce to \eqref{eq:gsn_seq_expo_large_r}, and we expect to attain the prediction-free rate. At the other extreme, if $S_\epsilon$ and $B_r$ are disjoint, see Figure \ref{fig:disjoint}, there is a unique point under the alternative hypothesis that is closest to the null distribution. Thus, one can attain parametric rates by performing a likelihood ratio test between the null distribution and the unique closest distribution under the alternative \citep{LeCamAsymptoticMethodsStatistical1986}: \begin{equation}\label{eq:theta_star}
\theta_*(r) = \arginf_{\theta \in B_r}\norm{\theta}_2^2. \end{equation}

In general, the hardness of distinguishing the hypotheses \eqref{eq:hypotheses_oracle_single_prediction_expo} must interpolate between these extreme cases. An oracle test allocates power according to the informativeness of the prediction. When the prediction is accurate, the test concentrates all the detection power along the predicted direction. Conversely, when the prediction is uninformative, the test spreads power across all directions. Intuitively, the key quantity governing this interpolation should be related to the amount of information that the prediction has about the data distribution. Henceforth, let $\hat{U}=\hat{\theta}/\norm{\hat{\theta}}_2$ be the direction of the prediction, and let $\Pi_{\hat{U}}$ denote the projection onto the span of the prediction. We let $\gamma_*(\epsilon,r)$ denote the length of the smallest projection onto the span of the prediction under the alternative hypothesis \begin{equation}\label{eq:projection_span_predictions_expo}
\gamma_*^2(\epsilon,r) = \inf_{\theta \in S_\epsilon \cap B_r}\norm{\Pi_{\hat{U}}\theta}_2^2.
\end{equation} The quantity is fundamentally linked to the amount of information about the data distribution that is contained in the projection onto the prediction's span. Henceforth, let $\Corr(\epsilon,r)$ denote the worst cosine similarity between the mean of the data $\theta$ and its projection onto the span of the prediction $\Pi_{\hat{U}}\theta$ under the alternative hypothesis: \begin{equation}\label{eq:worst_correlation}
\Corr(\epsilon,r)=\inf_{\theta \in S_\epsilon \cap B_r} \Corr\left(\Pi_{\hat{U}}\theta,\theta\right)  = \frac{\gamma_*(\epsilon,r)}{\epsilon} \quad \text{ if } S_\epsilon \cap B_r \neq \emptyset,
\end{equation} where we define the cosine similarity between two vectors $a$ and $b$ as $\Corr\left(a,b\right) = \inner{a}{b} / (\norm{a}_2 \cdot \norm{b}_2)$. Thus, as $\Corr(\epsilon,r)$ increases, the projection onto the prediction's span becomes more informative of the mean of the data distribution. 

The following theorem summarizes the oracle performance, which, by virtue of knowing the accuracy of the prediction, can decide optimally when to utilize it. We require that $B_r \not\subset B_\epsilon^\circ$, where $B_\epsilon^\circ$ is the interior of the ball $B_\epsilon$, i.e., $B_\epsilon^\circ=B_\epsilon \setminus S_\epsilon$, so that the alternative hypothesis \eqref{eq:hypotheses_oracle_single_prediction_expo} is not empty. 

\begin{theorem}\label{thm:oracle_rates_single_prediction} If $B_r \not\subset B_\epsilon^\circ$, the hypotheses \eqref{eq:hypotheses_oracle_single_prediction_expo} can be tested with nontrivial power if and only if \begin{equation}
n \gtrsim \begin{dcases}
\sqrt{d} \cdot \epsilon^{-2} &\textif  \Corr(\epsilon,r) \leq c \cdot d^{-1/4} \textand S_\epsilon \cap B_r \neq \emptyset.
\\
\gamma_*^{-2}(\epsilon,r) &\textif \Corr(\epsilon,r) \in \left[c \cdot d^{-1/4},1\right] \textand S_\epsilon \cap B_r \neq \emptyset. 
\\
\norm{\theta_*(r)}_2^{-2} &\text{if } S_\epsilon \cap B_r = \emptyset.
% \label{eq:disjoint_neigh}
\end{dcases} \end{equation} where $c$ is a positive constant. Furthermore, the rates depend continuously on the intersection $S_\epsilon \cap B_r$, and are ordered from worst to best, from top to bottom.
\end{theorem}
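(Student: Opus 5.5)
We prove matching upper and lower bounds separately in each of the three regimes, and then check continuity and ordering.

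\textbf{Upper bounds.} We combine two tests.
\begin{itemize}[label={}]
\item The first is the chi-squared test \eqref{eq:expo_chi_squared_test}, which rejects when $\norm{X}_2^2 - d/n$ exceeds a multiple of $\sqrt{d}/n$. It has nontrivial power whenever $n\gtrsim \sqrt{d}\,\epsilon^{-2}$, because under $H_1$ we have $\norm{\theta}_2\ge\epsilon$.
\item The second is the one-dimensional projected test, which rejects when $|\inner{\hat U}{X}|\gtrsim n^{-1/2}$. Since $\inner{\hat U}{X}\sim\Normal(\inner{\hat U}{\theta},1/n)$, its power is nontrivial once $|\inner{\hat U}{\theta}|\gtrsim n^{-1/2}$ uniformly over the alternative.
\end{itemize}
The key geometric step is to show that $\inf_{\theta\in B_r,\norm{\theta}_2\ge\epsilon}|\inner{\hat U}{\theta}|$ is attained on $S_\epsilon\cap B_r$ when this intersection is nonempty, so that it equals $\gamma_*(\epsilon,r)$, and that it equals $\norm{\theta_*(r)}_2$ otherwise. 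For the first claim, write $\theta = a\hat U + v$ with $v\perp \hat U$. Membership in $B_r$ reads $(a-\norm{\hat\theta}_2)^2+\norm{v}_2^2\le r^2$. For fixed $a$, shrinking $\norm{v}_2$ keeps $\theta$ in $B_r$, so lowering $\norm{\theta}_2$ down to $\epsilon$ while $a$ stays fixed does not increase the projection; a monotonicity argument in $a$ then completes the claim. For the second claim, $B_r\cap B_\epsilon=\emptyset$ together with $B_r\not\subset B_\epsilon^\circ$ forces every point of $B_r$ to have norm at least $\norm{\theta_*(r)}_2>\epsilon$. Moreover $\theta_*(r)$ lies on the ray of $\hat U$, and the projection is minimized at $\theta_*(r)$. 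The oracle then uses whichever test is better, giving the bound $n\gtrsim\min(\sqrt d\,\epsilon^{-2},\gamma_*^{-2})$. This matches the first two cases, since $\gamma_*^{-2}\le\sqrt d\,\epsilon^{-2}$ if and only if $\Corr(\epsilon,r)=\gamma_*/\epsilon\ge d^{-1/4}$.

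\textbf{Lower bounds.} In the disjoint case we use Le Cam's two-point bound between $\Normal(0,I_d/n)$ and $\Normal(\theta_*(r),I_d/n)$. The total variation distance is small unless $n\norm{\theta_*(r)}_2^2\gtrsim1$. In the intersecting cases, the $\gamma_*^{-2}$ part follows from the same two-point argument applied to the minimizer $\theta_0\in S_\epsilon\cap B_r$ of $\gamma_*$. This is valid because $\norm{\theta_0}_2=\epsilon\ge\gamma_*$, so the two-point bound gives $n\gtrsim\epsilon^{-2}$, which is weaker than $\gamma_*^{-2}$; we therefore need a refinement.

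The refinement is a mixture prior. Take $\theta = \gamma\hat U + \rho\,\xi$, where $\xi$ is uniform on the unit sphere of $\hat U^\perp$ (or Rademacher on an orthonormal basis of it, scaled by $1/\sqrt{d-1}$). Choose $\gamma\approx\gamma_*$ and $\rho=\sqrt{\epsilon^2-\gamma^2}$ so that the whole support lies in the cap $S_\epsilon\cap B_r$. Checking this needs the cap to be rotationally symmetric about $\hat U$, which it is: the points of $S_\epsilon$ with a fixed projection $\gamma$ form an orbit of rotations fixing $\hat U$, and $B_r$ is invariant under those rotations.

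The chi-squared divergence factorizes into the one-dimensional part and the orthogonal part:
\begin{equation}
1+\chi^2 \le e^{n\gamma^2}\cdot E\big[e^{n\rho^2\inner{\xi}{\xi'}}\big]\le e^{n\gamma^2}\,e^{C n^2\epsilon^4/d}.
\end{equation}
This is bounded when $n\lesssim\gamma_*^{-2}$ and $n\lesssim\sqrt d\,\epsilon^{-2}$. It gives the lower bound $\min(\gamma_*^{-2},\sqrt d\,\epsilon^{-2})$, which matches the upper bound in both intersecting regimes.

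\textbf{Continuity and ordering.} The rates are continuous at the boundary $\Corr=c\,d^{-1/4}$ because both expressions agree there. At the transition to the disjoint case, $S_\epsilon\cap B_r$ shrinks to the single point $\theta_*(r)$ with $\norm{\theta_*(r)}_2=\epsilon=\gamma_*$, so the rates agree there as well. The ordering follows from $\gamma_*\le\epsilon$ and $\norm{\theta_*(r)}_2\ge\epsilon$.

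\textbf{Main obstacle.} We expect the main obstacle to be the geometric step, namely verifying that the infimum of the projection over the unbounded alternative is achieved on the cap and identifying the cap as rotationally symmetric, so that the spherical prior is supported in $H_1$. The chi-squared calculation itself is standard (Ingster).
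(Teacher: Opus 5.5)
Your proposal is correct and is essentially the paper's proof: the upper bound combines the chi-squared test with a projection test onto $\hat U$ (the paper uses the one-sided version in the direction of $A_*(\epsilon,r)$, and the likelihood-ratio test in the disjoint case). The lower bound uses the same prior $\gamma_*\hat U+\sqrt{\epsilon^2-\gamma_*^2}\,\xi$ with $\xi$ uniform on the unit sphere of $\hat U^\perp$, giving $1+\chi^2\le e^{n\gamma_*^2}e^{Cn^2\epsilon^4/d}$, plus a two-point bound when $S_\epsilon\cap B_r=\emptyset$. The differences are cosmetic: you certify the prior's support via rotational symmetry about $\hat U$ rather than via the half-space description $\hat U^T\theta\ge A(\epsilon,r)$ of Lemma~\ref{lemma:Astar}, and you should drop the passing claim that a two-point argument yields the $\gamma_*^{-2}$ part, since, as you note yourself, it only gives $\epsilon^{-2}$.
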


In broad strokes, Theorem \ref{thm:oracle_rates_single_prediction} states that if the worst cosine similarity between the mean of the data distribution and its projection onto the prediction is low, then the prediction is not informative enough, and we obtain the prediction-free rate. Geometrically, the first case can be foreseen by noting that when $\Corr(\epsilon,r) \lesssim d^{-1/4}$, an adversary is allowed to choose a data distribution under the alternative hypothesis whose mean is orthogonal to the prediction, see Figure \ref{fig:half}, making the prediction uninformative. 

However, if $\Corr(\epsilon,r)$ is large enough, in the second case, the oracle can project the data onto the prediction ($\Pi_{\hat{U}}X$) and make a decision based on this one-dimensional quantity, avoiding any dependence on the ambient dimension of the data. Finally, in the third case, when a prediction is extremely informative, $S_\epsilon$ and $B_r$ are disjoint, and the testing problem reduces to the binary test between $H_0: \theta = 0$ and $H_1: \theta=\theta_*$, as defined in \eqref{eq:theta_star}, which yields dimension-independent parametric rates. Hereafter, we will omit the case where $S_\epsilon$ and $B_r$ do not intersect from the introduction, since it is related to the well-studied binary hypothesis testing problem and is relevant to practitioners only when predictions are extremely accurate.

Practitioners may have access to multiple predictions, each of which may contain useful information about the data distribution. To model this setting, we extend the hypotheses in \eqref{eq:hypotheses_oracle_single_prediction_expo} to multiple predictions. Let $r \in \R_+^m$ denote the vector whose entries quantify the accuracy of the $m$ black-box predictions. We then define the hypotheses as follows:
\begin{equation}\label{eq:hypotheses_oracle_multiple_predictions_expo}
H_0: \theta=0,
\vs H_1: \norm{\theta}_2 \geq \epsilon(r) \textand \norm{\theta-\hat{\theta}_i}_2  \leq r_i\for i \in [m].
\end{equation} Note that we have written explicitly $\epsilon$ as a function of the accuracy vector $r \in \R_+^m$, as one can intuit that the more accurate the predictions are, the smaller $\epsilon(r)$ should be. We call the function $r \mapsto \epsilon(r)$ a separation curve, as it quantifies the separation between the null and alternative hypotheses.

Henceforth, we let $B_r$ denote the intersection of all prediction neighborhoods, $B_r = \cap_{i=1}^m B(\hat{\theta}_i,r_i)$, and let $\Pi_{\hat{U}}\theta$ in \eqref{eq:projection_span_predictions_expo} and \eqref{eq:worst_correlation} denote the projection onto the span of the predictions, where $\hat{U}$ is a matrix in $\R^{d\times m}$, whose $i$-th column is given by the $i$-th predicted direction $\hat{u}_i = \hat{\theta}_i/\norm{\hat{\theta}_i}_2$. Since the oracle knows the accuracy of all predictions, it can aggregate them optimally and perform a one-dimensional test by combining their projections: \begin{equation}\label{eq:optimal_aggregation}
\text{ reject $H_0$ if }\inner{\Pi_{\hat{U}}X}{W} \text{ is large,}
\end{equation} where $W$ is a vector of weights that depends on the vector of accuracies $r$ and the separation curve. Thus, when predictions are informative, the performance of the oracle test is independent of the dimension of the data.

\begin{theorem}[Optimal performance of oracle with access to arbitrary predictions]\label{thm:oracle_rates_orthogonal_predictions} If $\Rank \hat{U} \leq d/2$, the hypotheses \eqref{eq:hypotheses_oracle_multiple_predictions_expo} can be tested with nontrivial power if and only if  \begin{equation}\label{eq:oracle_rates_orthogonal_predictions}
\epsilon(r) \gtrsim \inf\left\{\epsilon\geq0: n \gtrsim \frac{1}{\gamma_*^2(\epsilon,r)}\wedge \frac{\sqrt{d}}{\epsilon^2} \textand S_\epsilon \cap B_r \not= \emptyset \right\},
\end{equation} where we omitted the case $S_\epsilon \cap B_r = \emptyset$.
\end{theorem}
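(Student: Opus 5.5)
The plan is to prove matching upper and lower bounds, with the two regimes separated by which term attains the minimum in $\gamma_*^{-2}(\epsilon,r)\wedge \sqrt d\,\epsilon^{-2}$.

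\textbf{Upper bound.} I would combine two tests through a Bonferroni union, splitting the level $\alpha$ in half between them.

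The first test is the chi-squared test \eqref{eq:expo_chi_squared_test}. It rejects when $n\norm{X}_2^2 - d \gtrsim \sqrt d$. Its power is nontrivial as soon as $n\norm{\theta}_2^2 \gtrsim \sqrt d$, which holds whenever $n \gtrsim \sqrt d\,\epsilon^{-2}$.

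The second test is the projection test. Here $\Pi_{\hat U}X \sim \Normal(\Pi_{\hat U}\theta, \Pi_{\hat U}/n)$ lives in a $k=\Rank\hat U$ dimensional space. The natural oracle statistic is the linear test \eqref{eq:optimal_aggregation} with direction $W$. I would take $W$ to be the minimal-norm point $\Pi_{\hat U}\theta_\dagger$ of the convex set $\Pi_{\hat U}(B_\epsilon^c\cap B_r)$, or of its convex hull. By the separating hyperplane argument, $\inner{\Pi_{\hat U}\theta}{W/\norm{W}_2}\ge \gamma_*$ for every alternative $\theta$. The null law of $\sqrt n\inner{\Pi_{\hat U}X}{W}/\norm{W}_2$ is $\Normal(0,1)$, so the test has nontrivial power once $n\gamma_*^2\gtrsim 1$.

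One subtlety is that the infimum in \eqref{eq:projection_span_predictions_expo} is over the sphere, while the alternative set is $\{\norm\theta_2\ge\epsilon\}\cap B_r$. I would show that scaling toward $S_\epsilon$ inside the convex $B_r$ reduces $\norm{\Pi_{\hat U}\theta}_2$. Concretely, any $\theta$ with $\norm\theta_2>\epsilon$ can be joined to a point of $S_\epsilon\cap B_r$ on a segment inside $B_r$ that has smaller projection. I expect to need some care here, since the projection norm is not monotone along arbitrary segments. If the convexity argument fails, I would fall back on a $\chi^2_k$ test of $\norm{\Pi_{\hat U}X}_2^2$. That test still gives the rate $\gamma_*^{-2}$ up to a $\sqrt k$ factor, and the factor is absorbed since $\sqrt k\le\sqrt d$.

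\textbf{Lower bound.} I would use Le Cam's two-point/mixture method with $\epsilon$ below the stated infimum, so that $n\gamma_*^2\ll 1$ and $n\epsilon^2\ll\sqrt d$ simultaneously. I would pick a minimizer $\theta_0\in S_\epsilon\cap B_r$ of $\norm{\Pi_{\hat U}\theta}_2$ and decompose it as $\theta_0=a+b$ with $a=\Pi_{\hat U}\theta_0$ and $b\perp\operatorname{span}\hat U$. The prior is $\theta = a + \norm{b}_2 V$, where $V$ is uniform on the unit sphere of the orthogonal complement, which has dimension $d-k\ge d/2$ because $\Rank\hat U\le d/2$.

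Every such $\theta$ has the same norm $\epsilon$. It also lies in each $B(\hat\theta_i,r_i)$, because $\norm{\theta-\hat\theta_i}_2^2=\norm{a-\hat\theta_i}_2^2+\norm b_2^2$ depends only on $a$ and $\norm b_2$. So the prior is supported on the alternative.

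The chi-square divergence factorizes as $E[\exp(n\inner{\theta}{\theta'})]$ for independent draws $\theta,\theta'$ from the prior, which equals $e^{n\norm a_2^2}\,E[\exp(n\norm b_2^2\inner V{V'})]$. The first factor is $1+o(1)$ because $n\gamma_*^2\ll1$. The second factor is $1+o(1)$ by the standard Ingster spherical computation, because $n\norm b_2^2\le n\epsilon^2\ll\sqrt{d-k}\asymp\sqrt d$.

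The main obstacle is this product structure. The lower bound must hold at the single worst point, while the infimum in \eqref{eq:oracle_rates_orthogonal_predictions} is taken over $\epsilon$. I therefore need to verify monotonicity of $\epsilon\mapsto\gamma_*(\epsilon,r)$ so that "below the infimum" implies both conditions at the chosen $\epsilon$. I would argue this monotonicity from Theorem \ref{thm:oracle_rates_single_prediction}'s continuity statement, adapted to the intersection $B_r$.
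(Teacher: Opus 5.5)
Your lower bound is the paper's construction for Lemma~\ref{lemma:lowerbound_oracle_test_arbitrary_predictions}, and your upper bound is the paper's oracle test: a Bonferroni union of the chi-squared test with the one-dimensional projection onto the min-norm point $A_*(\epsilon,r)$. The structure is right. However, the step you leave open in the upper bound is the crux, and your fallback for it does not work. That step is showing that every alternative $\theta$, including those with $\norm{\theta}_2>\epsilon$, satisfies $\inner{\Pi_{\hat U}\theta}{W}/\norm{W}_2\ge\gamma_*(\epsilon,r)$. The fallback $\chi^2_k$ test on $n\norm{\Pi_{\hat U}X}_2^2$, combined with the chi-squared test, is powerful only when $n\gtrsim\sqrt{k}\,\gamma_*^{-2}\wedge\sqrt d\,\epsilon^{-2}$. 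The inequality $\sqrt k\le\sqrt d$ compares $\sqrt k/\gamma_*^2$ with $\sqrt d/\gamma_*^2$, not with $1/\gamma_*^2$. For example, if $\gamma_*\asymp\epsilon$ and $k\asymp d$, the fallback needs $\sqrt{k}/\epsilon^2$ samples where the oracle needs $1/\epsilon^2$. That $\sqrt k$ is exactly the adaptivity gap of Lemmas~\ref{lemma:upperbound_adaptive_test_orthogonal_predictions} and~\ref{lemma:lowerbound_adaptive_test_orthogonal_predictions}. So the fallback proves the adaptive rate, not the oracle rate.

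The missing observation is short. For any $\theta$, the condition $\norm{\theta-\hat\theta_i}_2\le r_i$ is equivalent to $\inner{\hat u_i}{\theta}\ge A_i(\norm{\theta}_2,r)$, and $t\mapsto A_i(t,r)=(t^2+h_i^2-r_i^2)/(2h_i)$ is increasing. Hence every alternative $\theta$ has $\tilde U_k^T\theta$ in the polyhedron $Q_\epsilon=\{a\in\R^k:\hat U^T\tilde U_k a\ge A(\epsilon,r)\}$. By Lemma~\ref{lemma:Astar}, $Q_\epsilon\cap\{\norm{a}_2\le\epsilon\}=\mathcal{A}(\epsilon,r)\neq\emptyset$, so the min-norm point of the closed convex set $Q_\epsilon$ is $A_*(\epsilon,r)$. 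The projection inequality $\inner{a-A_*}{A_*}\ge 0$ for all $a\in Q_\epsilon$ then gives the required bound on the whole alternative. You need neither the convexity of $\Pi_{\hat U}(B_\epsilon^c\cap B_r)$, which you asserted without proof, nor a segment argument. The paper only asserts this reduction, so your write-up must supply it.

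The same fact settles your monotonicity concern. $Q_\epsilon$ shrinks as $\epsilon$ grows, so $\gamma_*(\epsilon,r)=\min_{a\in Q_\epsilon}\norm{a}_2$ is nondecreasing in $\epsilon$ wherever $S_\epsilon\cap B_r\neq\emptyset$. Continuity alone would not give this. Note also that monotonicity is needed for sufficiency, to ensure every $\epsilon$ above the infimum meets the sample-size condition. It is not needed for the lower bound: any $\epsilon$ below the infimum with $S_\epsilon\cap B_r\neq\emptyset$ already violates $n\ge C(\gamma_*^{-2}\wedge\sqrt d\,\epsilon^{-2})$, which is exactly your two conditions at once.
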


We remark that, albeit written succinctly, the rates in Theorem \ref{thm:oracle_rates_orthogonal_predictions} match those in Theorem \ref{thm:oracle_rates_single_prediction} when only one prediction is available. Furthermore, the interpretation remains the same as in the single-prediction setting. Under the alternative hypothesis, when the worst cosine similarity between the mean of the data distribution and its projection onto the span of the predictions \eqref{eq:worst_correlation} is of order $o(d^{-1/4})$, then predictions do not offer any benefit, and the oracle test disregards them. We note that Theorems \ref{thm:oracle_rates_single_prediction} and \ref{thm:oracle_rates_orthogonal_predictions} follow from Lemmas \ref{lemma:upperbound_oracle_test_arbitrary_predictions} and \ref{lemma:lowerbound_oracle_test_arbitrary_predictions} in Section \ref{sec:oracle}, where we further discuss them.

\noindent\textbf{Predictions of unknown quality.} In practice, we do not know the accuracy of the predictions. Therefore, the main goal is to construct tests that do not require that information to operate optimally. We formalize this requirement through the adaptivity framework used in minimax theory \citep{ingsterAdaptiveChisquareTests2000,gineMathematicalFoundationsInfinitedimensional2016}, where one aims to distinguish the hypotheses \begin{equation}\label{eq:hypotheses_multiple_predictions_expo}
H_0: \theta = 0 \vs H_1: \exists r \in \R_+^m \st \norm{\theta}_2 \geq \epsilon(r) \textand \norm{
\theta-\hat{\theta}_i
}_2 \leq r_i  \for i \in [m].
\end{equation} Unlike the oracle setting \eqref{eq:hypotheses_oracle_multiple_predictions_expo}, the alternative hypothesis in \eqref{eq:hypotheses_multiple_predictions_expo} does not reveal the accuracy of the predictions. A test for \eqref{eq:hypotheses_multiple_predictions_expo} must therefore adapt to this unknown accuracy, so we call it an \textit{adaptive} test. The key question is the \textit{cost of adaptation}: how many additional observations does the test require because it does not know the true prediction accuracy?

\noindent\textbf{Orthogonal predictions.} To illustrate this phenomenon, let us first consider the case of orthogonal predictions when $m \leq d$ to draw a comparison with Theorem \ref{thm:oracle_rates_orthogonal_predictions}. The oracle avoids any dimension dependence due to optimally aggregating predictions into a one-dimensional weighted projection \eqref{eq:optimal_aggregation} using the accuracies of the predictions. An adaptive test lacks this information, so it cannot perform the aggregation. However, since the predictions are orthogonal, it is reasonable for the adaptive test to monitor for deviations from the null distribution in all predicted directions: \begin{equation}\label{eq:projection_onto_predictions_span}
\text{ reject $H_0$ if }\norm{\Pi_{\hat{U}}X}_2^2 \text{ is large, where } \norm{\Pi_{\hat{U}}X}_2^2=\sum_{i=1}^m\ \norm{\Pi_{\hat{u}_i}X}_2^2.
\end{equation} Note that all projections are equally weighted; thus, the test rejects whenever the data deviates from the null distribution on any of the predicted directions. Given that we hedge our bets in all predicted directions equally, our performance is expected to depend on the number of predictions.

\begin{theorem}[Performance of adaptive test with access to orthogonal predictions]\label{thm:upperbound_adaptive_test_orthogonal_predictions} If $m \leq d/2$ and predictions are orthogonal, the hypotheses \eqref{eq:hypotheses_multiple_predictions_expo} can be tested with nontrivial power if  
\begin{equation}
\epsilon(r) \gtrsim \inf\left\{\epsilon\geq 0: n \gtrsim \frac{\sqrt{m}}{\gamma_*^2(\epsilon,r)}\wedge \frac{\sqrt{d}}{\epsilon^2} \textand S_\epsilon \cap B_r \not= \emptyset \right\},
\end{equation} where we omitted the case $S_\epsilon \cap B_r = \emptyset$.
\end{theorem}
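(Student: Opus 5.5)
We propose to combine two chi-squared-type statistics with a Bonferroni split. The first is the projected statistic $T_1=\norm{\Pi_{\hat{U}}X}_2^2$ from \eqref{eq:projection_onto_predictions_span}. The second is the prediction-free statistic $T_2=\norm{X}_2^2$ from \eqref{eq:expo_chi_squared_test}. Reject $H_0$ if either $T_1 > m/n + c_\alpha\sqrt{m}/n$ or $T_2 > d/n + c_\alpha\sqrt{d}/n$. Neither threshold depends on $r$, so the test is adaptive.

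Validity is immediate. Under $H_0$, orthogonality of the predictions gives $n T_1\sim\chi^2_m$ and $n T_2\sim\chi^2_d$. Chebyshev with variances $2m$ and $2d$, and $c_\alpha$ chosen for level $\alpha/2$ each, gives type I error at most $\alpha$.

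For power, fix any $\theta$ in the alternative, so there is some $r$ with $\norm{\theta}_2\ge\epsilon(r)$ and $\norm{\theta-\hat\theta_i}_2\le r_i$ for all $i$. Let $\epsilon'=\norm{\theta}_2$. Then $\theta\in S_{\epsilon'}\cap B_r$, so by definition $\norm{\Pi_{\hat U}\theta}_2^2\ge\gamma_*^2(\epsilon',r)$.

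Under the alternative, $nT_1$ is noncentral $\chi^2_m$ with noncentrality $\lambda=n\norm{\Pi_{\hat U}\theta}_2^2$. Its mean is $m+\lambda$ and its variance is $2m+4\lambda$. Chebyshev shows the first test has type II error at most $\beta$ whenever $\lambda\ge C\sqrt m$, i.e. whenever $n\gamma_*^2(\epsilon',r)\gtrsim\sqrt m$. The same computation for $T_2$ gives power whenever $n\epsilon'^2\gtrsim\sqrt d$. Thus power holds whenever $n\gtrsim \sqrt m/\gamma_*^2(\epsilon',r)\wedge \sqrt d/\epsilon'^2$.

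The step needing care is passing from $\epsilon'$ to the infimum $\epsilon(r)$ appearing in the statement. We need the rate condition at the point's own norm $\epsilon'\ge\epsilon(r)$, while the hypothesis only guarantees it near the infimum. We plan to show that $\epsilon\mapsto n\big(\gamma_*^2(\epsilon,r)/\sqrt m \vee \epsilon^2/\sqrt d\big)$ is nondecreasing in $\epsilon$ along the set where $S_\epsilon\cap B_r\neq\emptyset$. The second term is clearly monotone.

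For the first term, we would argue via the cosine similarity \eqref{eq:worst_correlation}, where $\gamma_*(\epsilon,r)=\epsilon\,\Corr(\epsilon,r)$. Take the minimizer $\theta$ on $S_\epsilon\cap B_r$. We expect that increasing $\epsilon$ does not decrease the minimal projection onto $\Span\hat U$, because $B_r$ is convex and $\Pi_{\hat U}$ is linear. We would try to establish this directly by following a radial path from the infimizing point and using convexity of $B_r$, or alternatively rely on the monotonicity of the oracle separation curve already used for Theorem \ref{thm:oracle_rates_orthogonal_predictions} (Lemma \ref{lemma:upperbound_oracle_test_arbitrary_predictions}).

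If exact monotonicity fails, we instead use the robust route. At $\epsilon=\epsilon(r)$ itself the condition holds up to constants, absorbed in $\gtrsim$. For $\epsilon'>\epsilon(r)$, the $T_2$ branch only improves. For the $T_1$ branch, we use that any $\theta$ of larger norm in $B_r$ has projection at least that of some point on $S_{\epsilon(r)}\cap B_r$, obtained by convex combination with a point of that cap. This monotonicity argument is the main obstacle; the rest is routine chi-squared concentration.
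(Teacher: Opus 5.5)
Your proposal is correct and has the same skeleton as the paper: Chebyshev bounds for noncentral chi-squared statistics (Proposition~\ref{prop:chebyshev_bound}), a Bonferroni split, and a lower bound on $\norm{\Pi_{\hat{U}}\theta}_2^2$ over the alternative. The difference is the companion statistic.

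\textbf{The paper's companion statistic.} The paper pairs $\norm{\Pi_{\hat{U}}X}_2^2$ with the complement statistic $\norm{\Pi_{\hat{U}^\perp}X}_2^2$; in the appendix it uses the weighted sum $f$. It must then minimize $x/\sqrt{m}+(\epsilon^2-x)/\sqrt{d-m}$ over $x\in[\gamma_*^2,\epsilon^2]$ and use $d-m\geq d/2$ to recover the $\sqrt{d}/\epsilon^2$ branch. In exchange, the two statistics live on disjoint coordinates, and the weighted sum needs no Bonferroni split.

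\textbf{Your companion statistic.} You pair with the full $\norm{X}_2^2$, which is exactly the paper's test \eqref{eq:bonferroni_test_with_intrinsic_projection} specialized to orthogonal predictions. This lets you handle the two branches separately, via $\norm{\Pi_{\hat{U}}\theta}_2^2\geq\gamma_*^2$ and $\norm{\theta}_2^2\geq\epsilon^2$. You need no optimization, and $m\leq d/2$ plays no role in the upper bound.

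\textbf{The monotonicity step.} This step is genuinely needed. The paper uses it tacitly when it asserts $\inf_{\theta\in H_1(\epsilon(r),r)}\norm{\Pi_{\hat{U}}\theta}_2^2=\gamma_*^2(\epsilon(r),r)$. It is immediate here, but your fallback is circular as written. Combining $\theta$ with a cap point $\theta_0$ only bounds $\gamma_*$ by a mixture of $\norm{\Pi_{\hat{U}}\theta_0}_2$ and $\norm{\Pi_{\hat{U}}\theta}_2$, and controlling $\norm{\Pi_{\hat{U}}\theta_0}_2$ is the original problem. Your ``radial path from the infimizer'' likewise starts at the wrong end.

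Two arguments close the step:
\begin{itemize}
\item \textbf{Orthogonal predictions.} Take $\theta\in B_r$ with $\norm{\theta}_2=\epsilon'\geq\epsilon$. Expanding $\norm{\theta-\hat{\theta}_i}_2^2\leq r_i^2$ gives $\inner{\theta}{\hat{u}_i}\geq A_i(\epsilon',r)\geq A_i(\epsilon,r)$, because $A_i$ is increasing in its first argument. By orthonormality, $\norm{\Pi_{\hat{U}}\theta}_2^2\geq\norm{A_+(\epsilon,r)}_2^2=\gamma_*^2(\epsilon,r)$.
\item \textbf{Arbitrary predictions.} Your convexity idea works if the partner point is $\Pi_{\hat{U}}\theta$ instead of a cap point. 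First, $\Pi_{\hat{U}}\theta\in B_r$ because every $\hat{\theta}_i$ lies in the span. The segment from $\Pi_{\hat{U}}\theta$ to $\theta$ then stays in $B_r$ and has constant projection. If $\norm{\Pi_{\hat{U}}\theta}_2<\epsilon$, the segment crosses $S_\epsilon$, so $\gamma_*(\epsilon,r)\leq\norm{\Pi_{\hat{U}}\theta}_2$. Otherwise $\norm{\Pi_{\hat{U}}\theta}_2\geq\epsilon\geq\gamma_*(\epsilon,r)$ trivially.
\end{itemize}
Either way, $\epsilon\mapsto\sqrt{m}/\gamma_*^2(\epsilon,r)\wedge\sqrt{d}/\epsilon^2$ is nonincreasing wherever $S_\epsilon\cap B_r\neq\emptyset$. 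Hence the feasible set in the infimum is upward closed, and your passage from $\epsilon'$ to $\epsilon(r)$ goes through.
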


Theorem \ref{thm:upperbound_adaptive_test_orthogonal_predictions} shows that the adaptive test can require more observations than the oracle test in Theorem \ref{thm:oracle_rates_orthogonal_predictions}. The ratio between these sample sizes, called the \textit{adaptivity gap} in the literature, can be as large as $\sqrt{m}$. This gap arises because the adaptive test \eqref{eq:projection_onto_predictions_span} must search for deviations throughout the span of the predictions, whereas the oracle test \eqref{eq:optimal_aggregation} uses the predictions' accuracies to aggregate the predicted directions into a single direction. When only a single prediction is available ($m=1$), both tests search for deviations along the same predicted direction and therefore have the same rate-wise performance. As $m$ grows, however, orthogonal predictions increase the number of directions along which the adaptive test \eqref{eq:projection_onto_predictions_span} must search, while the oracle test \eqref{eq:optimal_aggregation} continues to detect deviations along a single aggregated direction. Thus, having many orthogonal predictions can increase the cost of adaptation rather than necessarily improve performance.

\noindent\textbf{Is there a free lunch?} Nevertheless, comparing the performance of the adaptive test in Theorem \ref{thm:upperbound_adaptive_test_orthogonal_predictions} to the prediction-free rate achieved by the chi-squared test \eqref{eq:expo_chi_squared_test} that ignores predictions might lead one to believe that using predictions is always beneficial. This advantage is true only in a rate-wise sense. That is, if the predictions are high-quality, the adaptive test will be more sample-efficient than the chi-squared test. However, if the predictions are low-quality, the adaptive test's sample efficiency will be at most a constant factor worse than that of the chi-squared test. 

This phenomenon is exemplified by the simulation shown in Figure \ref{fig:single_prediction}, where a single prediction is made available to each test. Details are deferred to Section \ref{sec:simulations_blackbox}; however, note that when the prediction is of high quality, the \textit{adaptive test} substantially improves power relative to the chi-squared test that ignores the prediction. However, when the prediction is low-quality, the power of the adaptive test is worse than the chi-squared test's power by a constant. Furthermore, note that when the prediction is high quality, the performance of the adaptive test is independent of the ambient dimension of the data.

\begin{figure}[!b]
\centering
\includegraphics[width=\linewidth]{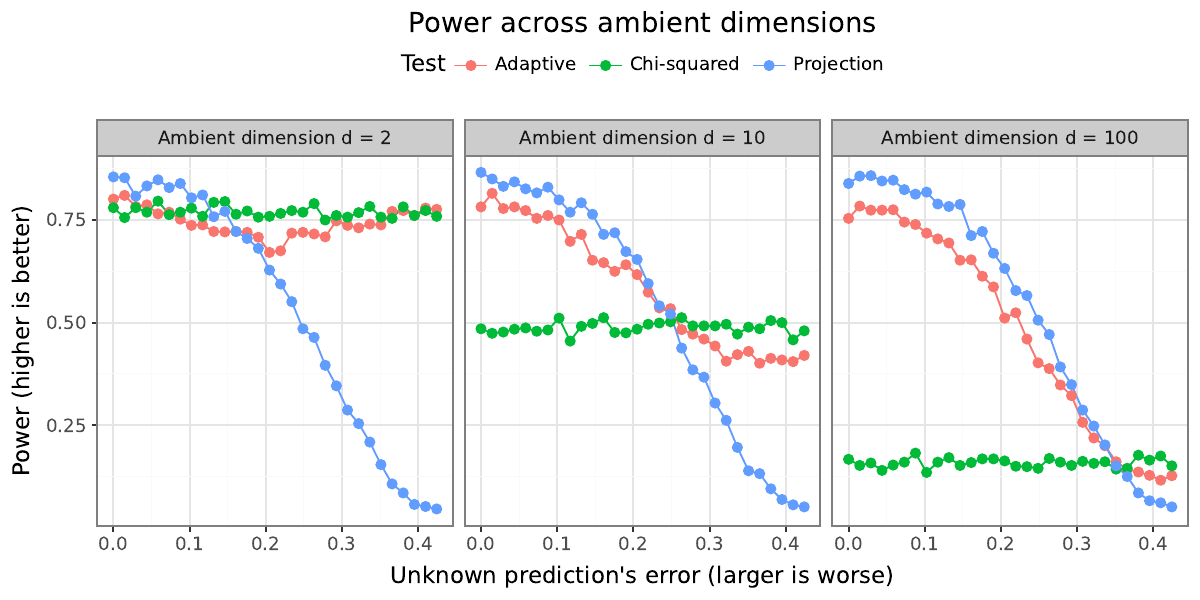}
\caption{Comparison between the chi-squared test \eqref{eq:chi_squared_test}, which ignores the prediction, the projection test \eqref{eq:projection_test}, which always uses the prediction, and the adaptive test \eqref{eq:bonferroni_test_with_standard_projection} that adapts to the unknown prediction's error.}
\label{fig:single_prediction}
\end{figure}

\noindent\textbf{Optimality of the adaptive test.} Although one can improve Theorem \ref{thm:upperbound_adaptive_test_orthogonal_predictions} for any fixed $r \in \R_+^m$, the dependence on the number of predictions is optimal among separation curves satisfying a natural invariance property. Namely, looking at the separation curves for both oracle (Theorem \ref{thm:oracle_rates_orthogonal_predictions}) and adaptive (Theorem \ref{thm:upperbound_adaptive_test_orthogonal_predictions}) tests, it is clear that they assign the same separation to accuracy vectors that induce projections of the same length under the alternative hypothesis. The following property quantifies this idea.

\begin{property*}[Property \ref{property:separation_curve_orthogonal_predictions} in Section \ref{sec:orthogonal}. Same separation for equivalent accuracy vectors] Let $r\to\epsilon_*(r)$ be a separation curve. If $\epsilon_*(r)=\epsilon$ for $r\in \R_+^m$, it holds that $\epsilon_*(r')=\epsilon$ for all $r'\in \R_+^m$ such that $\gamma_*(\epsilon,r)=\gamma_*(\epsilon,r')$.
\end{property*}

In other words, the property states that accuracy vectors that lead to equally hard worst-case distributions under the alternative hypotheses get assigned the same separation. The next theorem shows that, under Property \ref{property:separation_curve_orthogonal_predictions}, the adaptivity gap must grow with the ambient dimension of the prediction's span for orthogonal predictions.

\begin{theorem}\label{thm:lowerbound_adaptive_test_orthogonal_predictions} If $m \leq d/2$ and the predictions are orthogonal, for any separation curve that satisfies Property \ref{property:separation_curve_orthogonal_predictions}, the hypotheses \eqref{eq:hypotheses_multiple_predictions_expo} can be tested with nontrivial power only if \begin{equation}
\epsilon(r) \gtrsim \inf\left\{\epsilon\geq 0: n \gtrsim \frac{\sqrt{m}}{\gamma_*^2(\epsilon,r)}\wedge \frac{\sqrt{d}}{\epsilon^2} \textand  S_\epsilon \cap B_r \neq \emptyset \right\}.
\end{equation}
\end{theorem}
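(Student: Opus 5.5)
We prove the lower bound with a Le Cam style mixture argument in which Property \ref{property:separation_curve_orthogonal_predictions} forces the adaptive test to handle many accuracy vectors at the same separation simultaneously. Rotate coordinates so that the orthogonal predictions are $\hat{\theta}_i = \norm{\hat{\theta}_i}_2 e_i$ for $i\in[m]$.

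\textbf{Step 1: use the invariance.} Fix a target $r$ and write $\epsilon=\epsilon(r)$ and $\gamma=\gamma_*(\epsilon,r)$. Suppose a curve satisfying Property \ref{property:separation_curve_orthogonal_predictions} can be tested at $r$. By the property it assigns the same separation $\epsilon$ to every $r'$ with $\gamma_*(\epsilon,r')=\gamma$. Using this freedom, we construct accuracy vectors $r^{(1)},\dots,r^{(m)}$, one per prediction. The vector $r^{(j)}$ makes prediction $j$ accurate enough that $S_\epsilon\cap B_{r^{(j)}}$ contains points $\theta$ with $\norm{\Pi_{\hat{U}}\theta}_2=\gamma$, concentrated along direction $e_j$ up to sign. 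All other predictions are made uninformative, with large radii. The distributions in the alternative for $r^{(j)}$ therefore include means of the form
\begin{equation}
\theta = s_j\,\gamma\, e_j + \eta, \quad \eta \perp \Span(\hat{U}), \quad \norm{\eta}_2^2=\epsilon^2-\gamma^2 .
\end{equation}
We need to check that these points lie in every ball $B(\hat{\theta}_i, r_i^{(j)})$ and that $\gamma_*(\epsilon,r^{(j)})=\gamma$. This is a geometric computation with spherical caps; choosing the radius for $e_j$ so that the cap's closest projection equals $\gamma$ should do it. If the sign is a problem, we replace the sign flip by the cap center and rely on the $\eta$-mixing below.

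\textbf{Step 2: prior and chi-square bound.} Place the prior that draws $j$ uniformly from $[m]$ and draws $\eta$ uniformly on the sphere of radius $\sqrt{\epsilon^2-\gamma^2}$ in the orthogonal complement of the span, whose dimension is at least $d/2$. Call the resulting mixture $\bar P$. The standard Ingster computation gives
\begin{equation}
\chi^2(\bar P, P_0)+1 = E\big[\exp(n\inner{\theta}{\theta'})\big] = E_{j,j'}\big[\exp(n\gamma^2 I(j=j'))\big]\cdot E\big[\exp(n\inner{\eta}{\eta'})\big].
\end{equation}
The first factor equals $1+(e^{n\gamma^2}-1)/m$. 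This is $1+o(1)$ whenever $n\gamma^2\le \tfrac12\log m$, which is weaker than the needed $\sqrt m$ scaling. To obtain $\sqrt{m}$, we instead let $j$ be a random subset of $k$ coordinates, or equivalently use Rademacher signs over all $m$ predicted directions with per-coordinate magnitude $\gamma/\sqrt{m}$. This choice corresponds to the accuracy vector that spreads the projection evenly, and it has the same $\gamma_*$ by the property. The first factor then becomes $\cosh(n\gamma^2/m)^m\le \exp(n^2\gamma^4/(2m))$, which is bounded when $n\lesssim \sqrt m/\gamma^2$. The $\eta$ factor is bounded when $n\lesssim\sqrt{d}/\epsilon^2$ by the classical estimate.

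\textbf{Conclusion.} The chi-square divergence stays bounded whenever $n\lesssim \sqrt m\gamma^{-2}\wedge \sqrt d\epsilon^{-2}$ fails to hold in the required direction. More precisely, bounded divergence rules out nontrivial power unless $n\gtrsim \sqrt m/\gamma^2$ or $n\gtrsim\sqrt d/\epsilon^2$. This gives the stated infimum characterization after taking the infimum over $\epsilon$ with $S_\epsilon\cap B_r\neq\emptyset$.

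\textbf{Main obstacle.} The hardest step is the geometric one. We must verify that, for each sign pattern $s\in\{\pm1\}^m$, the mean $\sum_i s_i(\gamma/\sqrt m)e_i+\eta$ lies in the alternative for some accuracy vector $r'(s)$ with $\gamma_*(\epsilon,r'(s))=\gamma$. We must also check that these points are (near) worst-case, so that their separation is indeed $\epsilon$. Balls centered at $\hat{\theta}_i$ on the positive axes prefer positive signs. We would handle this by taking $r'_i$ large enough to contain both signs, while still pinning the projected length through a tight intersection with $S_\epsilon$. If that fails, we would fall back to sign-free spreading: random $\eta$ together with the subset prior using $k\approx\sqrt m$ active coordinates.
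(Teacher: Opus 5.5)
There is a genuine gap: the Rademacher-sign prior, which is the step that delivers $\sqrt{m}$, is not supported on the alternative under Property \ref{property:separation_curve_orthogonal_predictions}. Your overall architecture does match the paper's: fix $r$, use the Property to put a prior over a family of equivalent accuracy vectors, and factor the Ingster--Suslina $\chi^2$ into an in-span part and an orthogonal-complement part.

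The sign problem is as follows. Take $\hat{\theta}_i=h_ie_i$, and let $r'$ be any accuracy vector with $\theta\in S_\epsilon\cap B_{r'}$. Then $\epsilon^2+h_i^2-2h_i\theta_i=\norm{\theta-\hat{\theta}_i}_2^2\le (r_i')^2$, i.e.\ $A_i(\epsilon,r')\le\theta_i$. Hence
\begin{equation}
\gamma_*(\epsilon,r')=\norm{A_+(\epsilon,r')}_2\le\norm{(\theta_1,\dots,\theta_m)_+}_2=\gamma\sqrt{p/m}<\gamma
\end{equation}
for any sign pattern with $p<m$ plus signs and magnitudes $\gamma/\sqrt{m}$. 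So no accuracy vector to which the Property assigns separation $\epsilon$ contains such a $\theta$. Enlarging radii only lowers $A_i$, so your proposed fix of containing both signs while pinning the projected length cannot work. Your Rademacher computation is essentially the paper's warm-up, Lemma \ref{lemma:alt_point_lb_union_multiple_orthogonal_gaussian_seq_l2_testing}. That lemma only gives the weaker bound with $\norm{A(\epsilon,r)}_2$ in place of $\gamma_*(\epsilon,r)=\norm{A_+(\epsilon,r)}_2$, and only under the looser Property \ref{property:separation_curve_orthogonal_predictions_loose}.

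The missing ingredient is the fallback you mention but never carry out, and it is exactly the paper's proof. Take $S$ uniform over subsets of $[m]$ of size $M$. Set $r_i(S)=\sqrt{\epsilon^2+h_i^2-2h_i\gamma I(i\in S)/\sqrt{M}}$, so that $A(\epsilon,r(S))=\gamma\mathbb{1}_S/\sqrt{M}\ge 0$ and $\gamma_*(\epsilon,r(S))=\gamma$ exactly. Let $\theta$ have in-span coordinates $\gamma\mathbb{1}_S/\sqrt{M}$ plus your orthogonal $\eta$. Then $n\inner{\theta}{\theta'}=(n\gamma^2/M)\,|S\cap S'|+n\inner{\eta}{\eta'}$, where $|S\cap S'|$ is hypergeometric with parameters $(m,M,M)$. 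Dominating it in convex order by a $\mathrm{Binomial}(M,M/m)$ gives
\begin{equation}
E\exp\left(\frac{n\gamma^2}{M}\,|S\cap S'|\right)\le\exp\left(\frac{M^2}{m}\left(e^{n\gamma^2/M}-1\right)\right).
\end{equation}
This is $O(1)$ once both $n\gamma^2\lesssim M$ and $Mn\gamma^2\lesssim m$ hold. Your single-coordinate prior with $s_j=+1$ is the case $M=1$, which gives only $n\gamma^2\lesssim\log m$. Choosing $M=\sqrt{m}$ balances the two constraints and yields $n\lesssim\sqrt{m}/\gamma^2$. Without this sparse, sign-free prior and its overlap bound, the proposal does not reach the stated rate.
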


The proof is based on a variant of Le Cam's random alternative distributions \citep{tsybakovIntroductionNonparametricEstimation2009}, which must be supported on the appropriate spherical cap under the alternative hypothesis. These constructions differ substantially from those used in structure-agnostic functional estimation \citep{balakrishnanFundamentalLimitsStructureAgnostic2023}, and they explain the faster rates achieved by the testing procedures proposed in this work in comparison to the functional estimation rates.

\noindent\textbf{Arbitrary predictions.} Theorem \ref{thm:upperbound_adaptive_test_orthogonal_predictions} is most relevant when predictions come from different data sources, each of which provides independent information about the data distribution. In many applications, however, predictions are generated by the same model via resampling and are not expected to be orthogonal. 
 
A test such as \eqref{eq:projection_onto_predictions_span} need not be optimal when predictions are not orthogonal, because it assigns equal weight to every predicted direction. We should rather emphasize detecting deviations in directions most aligned with the majority of predicted directions, and essentially ignore those directions aligned with only a few predictions. 

Henceforth, let $\lambda_1\geq \dots \geq \lambda_k > 0$ denote the nonzero singular values of $\hat{U}$, and $\tilde{u}_1,\dots,\tilde{u}_k$ denote the corresponding orthonormal left singular vectors of $\hat{U}$. The squared singular values $\lambda_1^2\geq \dots \geq \lambda_k^2 > 0$ are the nonzero eigenvalues of the predictions' cosine similarity matrix $\Sigma = \hat{U}^T\hat{U}$. Our adaptive test relies on an operation that we call the \textit{intrinsic projection} of the data onto the span of the predictions: \begin{equation}\label{eq:intrinsinc_projection_span_of_predictions}
\text{ reject  }H_0 \text{ if }\norm{\Pi_{\hat{U}}^{\intrinsic} X}_2^2 \text{ is large, where } \norm{\Pi_{\hat{U}}^{\intrinsic} X}_2^2 = \sum_{i=1}^k \frac{\lambda_i^2}{\lambda_1^2}\cdot \norm{\Pi_{\tilde{u}_i}X}_2^2 .
\end{equation} The weighting automatically downweights directions that align poorly with the predictions. As a result, the test focuses on fewer directions in which it must detect deviations from the null hypothesis. The intrinsic projection test owes its name to its performance not depending on the ambient dimension of the span of the predictions, but on a notion of its intrinsic dimension. 

We define the intrinsic dimension of $\Sigma^2$ as  $\intrinsic \Sigma^2 = \trace \Sigma^2/\norm{\Sigma^2}_{\op} $ where the numerator is the trace of $\Sigma^2$ and the denominator is its operator norm \citep{ipsenStableRankIntrinsic2024}. This quantity measures the effective degree of orthogonality among the predictions. For instance, the intrinsic dimension of $\Sigma^2$ equals $1$ when all predictions are parallel, and equals $m$ when they are orthogonal. In this work, we show that the performance of an adaptive test depends on the intrinsic dimension of $\Sigma^2$ rather than the ambient dimension of the span of the predictions.

\begin{theorem}[Performance of adaptive test with access to arbitrary predictions]\label{thm:upperbound_adaptive_test_arbitrary_predictions} The hypotheses \eqref{eq:hypotheses_multiple_predictions_expo} can be tested with nontrivial power if  \begin{equation}
\epsilon(r) \gtrsim
\inf\left\{\epsilon\geq 0:
n \gtrsim
\frac{\sqrt{\intrinsic \Sigma^2}}{\tilde{\gamma}_*^2(\epsilon,r)}
\wedge
\frac{\sqrt{d}}{\epsilon^2}\textand  S_\epsilon \cap B_r \not= \emptyset\right\} 
\end{equation}where $\tilde{\gamma}^2_*(\epsilon,r)=\inf_{\theta \in S_\epsilon \cap B_r} \norm{\Pi^{\intrinsic}_{\hat{U}}\theta}_2^2$ and we omitted the case $S_\epsilon \cap B_r = \emptyset$.
\end{theorem}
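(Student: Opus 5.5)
The proof will analyze a Bonferroni-type combination of two tests: the chi-squared test \eqref{eq:expo_chi_squared_test}, which rejects when $\norm{X}_2^2$ is large, and the intrinsic projection test \eqref{eq:intrinsinc_projection_span_of_predictions}. Each test is run at level $\alpha/2$, and we reject if either one rejects. Validity is then immediate from the union bound. For power, fix an alternative $\theta$ and an accuracy vector $r$ with $\norm{\theta}_2\geq\epsilon(r)$ and $\theta\in B_r$. It suffices to show that at least one of the two tests has power $1-\beta$. By the definition of the infimum, there is an $\epsilon\le\epsilon(r)$ (up to constants) with $S_\epsilon\cap B_r\neq\emptyset$ such that either $n\gtrsim \sqrt d/\epsilon^2$ or $n\gtrsim \sqrt{\intrinsic\Sigma^2}/\tilde\gamma_*^2(\epsilon,r)$. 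The first case is handled by the classical Ingster analysis of the chi-squared test, using only $\norm{\theta}_2\geq\epsilon$. That analysis is a Chebyshev argument: under $H_0$, $n\norm{X}_2^2-d$ has variance $2d$, and under $H_1$ the mean shift is $n\norm{\theta}_2^2$ while the variance is $2d+4n\norm{\theta}_2^2$.

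For the second case, write $T=n\norm{\Pi^{\intrinsic}_{\hat U}X}_2^2=\sum_{i=1}^k w_i\, n\langle \tilde u_i,X\rangle^2$ with $w_i=\lambda_i^2/\lambda_1^2\in(0,1]$. Under $H_0$ the variables $Z_i=\sqrt n\langle\tilde u_i,X\rangle$ are i.i.d. standard normal, so $E_0T=\sum w_i$ and $V_0T=2\sum w_i^2=2\,\trace\Sigma^2/\norm{\Sigma^2}_{\op}=2\intrinsic\Sigma^2$. This is exactly where the intrinsic dimension enters. We use the threshold $\sum w_i+c_\alpha\sqrt{\intrinsic\Sigma^2}$. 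Under $\theta$, $Z_i\sim\Normal(\sqrt n\langle\tilde u_i,\theta\rangle,1)$. Hence $E_\theta T-E_0T=n\norm{\Pi^{\intrinsic}_{\hat U}\theta}_2^2$, and $V_\theta T=2\sum w_i^2+4n\sum w_i^2\langle\tilde u_i,\theta\rangle^2\le 2\intrinsic\Sigma^2+4n\norm{\Pi^{\intrinsic}_{\hat U}\theta}_2^2$, where the last step uses $w_i^2\le w_i$. Chebyshev then gives power at least $1-\beta$ as soon as $n\norm{\Pi^{\intrinsic}_{\hat U}\theta}_2^2\gtrsim\sqrt{\intrinsic\Sigma^2}$. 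In particular the test does not need to know $r$, which is the source of adaptivity.

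The main obstacle is to pass from the infimum $\tilde\gamma_*(\epsilon,r)$, which is taken over the sphere $S_\epsilon$, to the actual $\theta$, whose norm $\norm{\theta}_2\ge\epsilon$ may be larger and which need not lie on $S_\epsilon$. I would settle this with a monotonicity/convexity argument. Since $B_r$ is convex and contains both $\theta$ and a point of $S_\epsilon$, and since the map $\theta\mapsto\norm{\Pi^{\intrinsic}_{\hat U}\theta}_2^2$ is a convex quadratic form, the plan is to show that $\inf_{\theta\in B_r,\ \norm{\theta}_2\ge\epsilon}\norm{\Pi^{\intrinsic}_{\hat U}\theta}_2^2$ is attained on $S_\epsilon\cap B_r$ (or bounded below by it up to constants). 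The first attempt would be to argue by contradiction. If a minimizer $\theta$ had $\norm{\theta}_2>\epsilon$, we could move it along the segment toward the point where $B_r$ meets $B_\epsilon$ and track the quadratic along the way. If that fails in general, I would instead mirror the oracle analysis in Lemma \ref{lemma:upperbound_oracle_test_arbitrary_predictions}, which presumably handles the same issue for $\gamma_*$, and note that $\norm{\Pi^{\intrinsic}_{\hat U}\cdot}$ is a weighted version of $\norm{\Pi_{\hat U}\cdot}$. Alternatively, the separation curve can be defined so that the condition holds at $\epsilon=\norm{\theta}_2$ itself, using that $\epsilon\mapsto\sqrt d/\epsilon^2$ is decreasing. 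Combining the two cases through the minimum $\wedge$ completes the argument.
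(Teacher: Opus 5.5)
Your test, your moment computations (null variance $2\sum_i w_i^2=2\intrinsic\Sigma^2$, and $w_i^2\le w_i$ under the alternative), and your Bonferroni combination match the paper's proof. The gap is the step you flagged, and none of your three plans closes it.

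\textbf{Plans one and two.} Both need $\inf\{\norm{\Pi^{\intrinsic}_{\hat U}\theta}_2^2:\theta\in B_r,\ \norm{\theta}_2\ge\epsilon\}\gtrsim\tilde\gamma_*^2(\epsilon,r)$, and this is false. The paper only asserts it, as the first case of \eqref{eq:minimum_projection}, so there is nothing to mirror. For the unweighted projection the claim does hold. For $t\ge\epsilon$, every $\tilde U_k^T\theta$ with $\theta\in S_t\cap B_r$ lies in the polyhedron $\{a:\hat U^T\tilde U_k a\ge A(\epsilon,r)\}$. The minimum-Euclidean-norm point of this polyhedron has norm at most $\epsilon$, so by Lemma \ref{lemma:Astar} it is attained on $S_\epsilon\cap B_r$. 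That argument needs the objective to be the same Euclidean norm as the constraint $\norm{a}_2\le t$. For $\norm{\Lambda_k a}_2$ the minimizer can lie outside the $\epsilon$-ball.

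\textbf{Plan three.} It only uses that $\epsilon\mapsto\sqrt d/\epsilon^2$ is decreasing, which covers the chi-squared branch. The intrinsic branch is not monotone, because $\epsilon\mapsto\tilde\gamma_*^2(\epsilon,r)$ can decrease.

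\textbf{Counterexample.} Let $v,w_1,\dots,w_m$ be orthonormal and $\hat\theta_i=h(v+w_i)/\sqrt2$.
\begin{itemize}
\item Then $\Sigma=\frac12(I_m+\mathbb{1}_m\mathbb{1}_m^T)$, $\lambda_1^2=(m+1)/2$, $\intrinsic\Sigma^2<2$, and $\norm{\Pi^{\intrinsic}_{\hat U}\theta}_2^2=\frac{2}{m+1}\norm{\hat U^T\theta}_2^2$.
\item Take $r_1=h-a$, $r_j=h+3a$ for $j\ge2$, and $h\ge4a$. Then $S_a\cap B_r=\{a\hat u_1\}$ and $\tilde\gamma_*^2(a,r)=\frac{(m+3)a^2}{2(m+1)}\ge a^2/2$.
\item Choosing $na^2=2\sqrt2C$ makes the condition hold at $\epsilon=a$, so $\epsilon_*(r)\le a=\min_{B_r}\norm{\theta}_2$.
\item Now take $\theta=2a\,\hat U\Sigma^{-1}e_1$. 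It has $\hat U^T\theta=2a e_1$ and $\norm{\theta}_2^2<8a^2\le 2ha+a^2$. Hence it lies in $B(\hat\theta_1,r_1)$, and the other balls contain $B_2^d(0,3a)$. So $\theta$ is in $B_r$ with $\norm{\theta}_2\ge\epsilon_*(r)$, i.e.\ in the alternative.
\item Yet $n\norm{\Pi^{\intrinsic}_{\hat U}\theta}_2^2=16\sqrt2C/(m+1)$ and $n\norm{\theta}_2^2<16\sqrt2C$. Neither statistic has power once $m\gg C$ and $d\gg C^2$.
\end{itemize}

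The failure is not specific to this test. By permutation symmetry, every $\theta^{(j)}=2a\,\hat U\Sigma^{-1}e_j$ is also in the alternative. Their pairwise inner products equal $-8a^2/(m+1)<0$, so the uniform mixture satisfies $\chi^2(P_\pi,P_0)\le e^{8na^2}/m$. By Proposition \ref{prop:two_fuzzy_hypotheses}, no test is powerful for large $m$. Replacing $2a$ by a larger multiple of $a$, with $h$ and $r_j$ enlarged accordingly, absorbs any constant in front of the curve. So the statement is false as written for large $m$, and no argument can close this step.

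What your argument does prove is a corrected statement. Replace $\tilde\gamma_*^2(\epsilon,r)$ by $\inf\{\norm{\Pi^{\intrinsic}_{\hat U}\theta}_2^2:\theta\in B_r,\ \norm{\theta}_2\ge\epsilon\}$. This quantity is nondecreasing in $\epsilon$, so your Chebyshev step goes through unchanged. It also reduces to the stated rate for orthogonal predictions.
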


Note that Theorem \ref{thm:upperbound_adaptive_test_orthogonal_predictions} depends on the ambient dimension of the predictions' span ($\Rank \hat{U}$), whereas Theorem \ref{thm:upperbound_adaptive_test_arbitrary_predictions} depends on the intrinsic dimension of their squared cosine similarity matrix ($\intrinsic \Sigma^2$). It always holds that  $\intrinsic \Sigma^2 \leq \Rank \hat{U}$, with equality being achieved when all predictions are orthogonal. Thus, orthogonal predictions can be considered a worst-case scenario for Theorem \ref{thm:upperbound_adaptive_test_arbitrary_predictions}.

Under orthogonality, Theorem \ref{thm:upperbound_adaptive_test_arbitrary_predictions} recovers Theorem \ref{thm:upperbound_adaptive_test_orthogonal_predictions}. Its main purpose, however, is to show that non-orthogonal predictions can improve adaptivity. When the predictions share substantial information, the adaptive test depends on the lower intrinsic dimension of the squared cosine similarity matrix rather than on the full ambient dimension of the predictions' span. This distinction matters in practice because predictions produced from the same predictive model often are not orthogonal, which can substantially reduce the intrinsic dimension. As an illustration, Figure \ref{fig:correlated_predictions} displays a simulation designed to be adversarial to the tests using intrinsic projection. Nevertheless, if the intrinsic dimension is low, adaptive methods based on intrinsic projection can outperform all competing methods. We defer the details to Section \ref{sec:simulations_blackbox}.

\begin{figure}[!t]
\centering
\includegraphics[width=\linewidth]{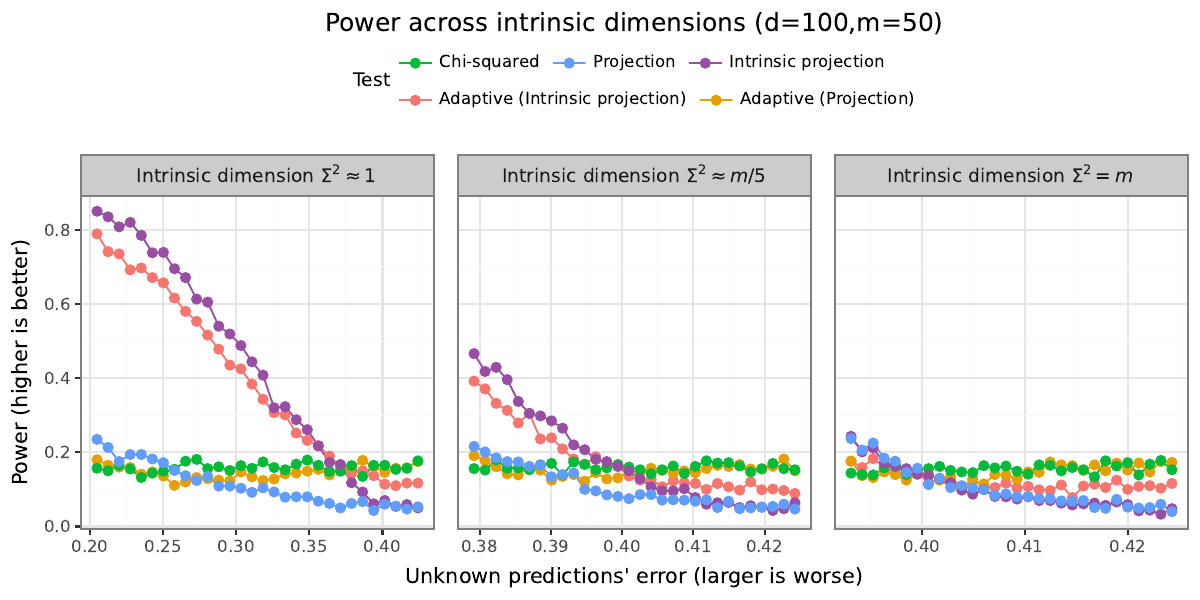}
\caption{Comparison between the chi-squared test \eqref{eq:chi_squared_test}, which ignores predictions, tests that always use predictions either via standard \eqref{eq:projection_test} or intrinsic \eqref{eq:intrinsic_projection_test} projection, and adaptive tests that use standard \eqref{eq:bonferroni_test_with_standard_projection} or intrinsic projection \eqref{eq:bonferroni_test_with_intrinsic_projection}.}
\label{fig:correlated_predictions}
\end{figure}

\subsection{Related work}

Our work is most directly related to the literature on property testing and goodness-of-fit testing with black-box predictions \citep{jankovaGoodnessoffitTestingHigh2020,lundborgProjectedCovarianceMeasure2024a,aliakbarpourOptimalAlgorithmsAugmented2024}. These works establish minimax limits for their respective problems. We complement them by studying adaptation: for orthogonal predictions, we characterize the cost of not knowing their accuracies.

To study this question, we draw on the framework of adaptive hypothesis testing \citep{spokoiny1996adaptive,ingsterAdaptiveChisquareTests2000}, which considers settings in which optimal testing procedures depend on unknown features of the alternative hypothesis, such as function smoothness. Adaptive tests avoid requiring these parameters and often retain nearly the same power as procedures that know them \citep{ingsterNonparametricGoodnessofFitTesting2003,gineMathematicalFoundationsInfinitedimensional2016}. Our setting departs from this usual phenomenon: for orthogonal predictions, the cost of adaptation is non-negligible and can grow with the number of available predictions.

More broadly, our paper contributes to the literature on structure-agnostic inference. Prior work has developed this perspective for functional estimation \citep{balakrishnanFundamentalLimitsStructureAgnostic2023,guOpenProblemStructureAgnostic2025,jinSharpStructureAgnosticLower2026} and extended it to causal inference \citep{jinItsHardBe2025,jinStructureagnosticOptimalityDoubly2024,bonviniDoublyrobustInferenceOptimality2024}. We extend this perspective to hypothesis testing with black-box predictions.

Our work is also closely connected to prediction-powered inference and related semi-supervised methods \citep{chakraborttyEfficientAdaptiveLinear2018,angelopoulosPredictionPoweredInference2023,chakraborttySemiSupervisedQuantileEstimation2024,chakraborttyGeneralFrameworkTreatment2024,zouGeneralizedPredictionPoweredInference2026,xuMorePredictionsImprove2026}. In particular, our results support the asymptotic conclusion of previous work \citep{zhangSemisupervisedInferenceGeneral2019,angelopoulosPPIEfficientPredictionPowered2024} that prediction-based estimators should be combined with estimators that do not use predictions to control worst-case error while preserving efficiency.

A related line of work studies model aggregation, beginning with \citet{tsybakov2003optimal}, which considers how to combine multiple candidate predictors or estimators to achieve near-oracle performance. This perspective was subsequently developed through mirror-averaging and exponential-weighting methods \citep{juditsky2008learning,dalalyan2007aggregation} and connected to sparse aggregation \citep{rigollet2012sparse}. Although this literature focuses primarily on prediction risk rather than hypothesis testing, it is conceptually relevant because our procedures likewise aggregate information across multiple black-box predictions.

Finally, our treatment of nonorthogonal predictions connects to a broader literature on statistical procedures whose performance depends on the intrinsic dimension or effective rank rather than the ambient dimension of the data. Existing results include asymptotic tests whose performance scales with the intrinsic dimension \citep{kimDimensionagnosticInferenceUsing2024}, as well as concentration bounds for self-adjoint operators and random matrices whose rates are governed by the intrinsic dimension or effective rank \citep{koltchinskiiAsymptoticsConcentrationBounds2015,weiEstimationCovarianceStructure2017,zhivotovskiyDimensionfreeBoundsSums2024,martinezIntrinsicdimensionEmpiricalBernstein2026}.

\section{Adaptation in the minimax framework}\label{sec:minimax}

Given data $X \sim P_\theta$ drawn from the distribution $P_\theta = \Normal(\theta, I_d/n)$ with unknown mean $\theta$, we wish to test whether $\theta=0$. The predictions may encode useful structural information about $\theta$, and an effective procedure should exploit this information to increase power. At the same time, the validity of the test must not depend on the accuracy of the prediction. In particular, the testing procedure should remain well defined even when the prediction accuracy is unknown. In this section, we formalize these desiderata within a minimax framework.

Ultimately, we aim to develop methods that optimally adapt to the unknown accuracy of the black-box prediction. However, to establish a baseline performance, let us first consider the case where the accuracy, denoted by $r$, is known. This setting is usually referred to as an \textit{oracle} setting in the literature, since algorithms may use accuracy to make decisions. That is, we consider the hypotheses \eqref{eq:hypotheses_oracle_multiple_predictions_expo}: \begin{equation}\label{eq:testing_with_known_accuracy}
H_0: P_\theta = P_0\vs H_1: P_\theta \in \mathcal{P}(\epsilon,r)
\end{equation} where $\mathcal{P}(\epsilon,r)=\left\{P_\theta: \norm{\theta}_2\geq \epsilon(r) \textand \norm{\theta-\hat{\theta}_i}_2 \leq r_i \for i \in [m]\right\}$ is the localized set of alternative distributions with respect to accuracy vector $r \in \R^m_+$ and the predictions $\hat{\theta}_1,\dots,\hat{\theta}_m$. However, we omit the dependence on the predictions to simplify the notation throughout the paper. This localization may improve power by reducing the size of the alternative set, while the test's validity remains unaffected since the null hypothesis does not depend on the accuracy of the prediction.

We are interested in characterizing the hardness of distinguishing the hypotheses \eqref{eq:testing_with_known_accuracy} with respect to the quality of the predictions. Let $
\Psi_\alpha = \left\{\psi : P_0(\psi(X)=1)\leq \alpha\right\}$ denote the set of all valid tests with type-I error at most $\alpha$. For any valid test $\psi$, we define its risk under the alternative hypothesis as the worst-case type-II error: $$
R(\mathcal{P}(\epsilon,r),\psi) = \sup_{P \in \mathcal{P}(\epsilon,r)} P(\psi(X)=0).$$ In particular, if a  valid test $\psi$ controls the worst-case type-II error $R(\mathcal{P}(\epsilon,r),\psi) < \beta$, for $\beta \in (0,1-\alpha)$, we say that $\psi$ is powerful. The minimax risk is then the best performance that we can achieve across all valid tests \begin{equation}\label{eq:minimax_risk_definition}
R_*(\mathcal{P}(\epsilon,r)) = \inf_{\psi \in \Psi_\alpha}R(\mathcal{P}(\epsilon,r),\psi).\end{equation} We can now define the \textit{optimal oracle separation}, also known as critical oracle separation or minimax oracle separation, as the smallest separation between the hypotheses \eqref{eq:testing_with_known_accuracy} for which there exists a valid and powerful test. \begin{definition}[Optimal oracle separation]\label{def:oracle_critical_separation} Let us fix $r \in \R_+^m$. We say that $\epsilon_*(r)$ is an oracle separation for hypotheses \eqref{eq:testing_with_known_accuracy}, if there exists a valid test that is powerful for distinguishing the null and alternative hypotheses that are $\epsilon_*(r)$ close. That is, $R_*(\mathcal{P}(\epsilon_*,r)) \leq \beta$. 

Furthermore, we say that $\epsilon_*$ is an optimal oracle separation, if no valid test can be powerful for hypotheses \eqref{eq:testing_with_known_accuracy} if $\epsilon(r) < \epsilon_*(r)$. That is, $R_*(\mathcal{P}(\epsilon,r)) > \beta$ if $\epsilon(r) < \epsilon_*(r)$.
\end{definition} 

We now turn to the realistic setting in which the testing algorithm does not know the prediction accuracy. Thus, the goal is to distinguish the hypotheses \eqref{eq:hypotheses_multiple_predictions_expo}: \begin{equation}\label{eq:testing_with_unknown_accuracy}
H_0: P_\theta=P_0 \vs H_1: \exists r \in \R_+^m \st P_\theta \in \mathcal{P}(\epsilon,r).
\end{equation} Note that under the alternative hypothesis, there is no fixed accuracy vector, which accounts for the fact that in practice we do not know the accuracy of the predictions. Analogously to Definition \ref{def:oracle_critical_separation}, we define an \textit{optimal separation curve} as the smallest separation between hypotheses \eqref{eq:testing_with_unknown_accuracy} for which there exists a valid and powerful test.
\begin{definition}[Optimal separation curve]\label{def:critical_separtion_curve} We say that $\epsilon_*$ is a separation curve for hypotheses \eqref{eq:testing_with_unknown_accuracy}, if there exists a valid test that distinguishes alternative hypotheses that are $\epsilon_*$ away. That is, $
R_*(\mathcal{P}(\epsilon_*)) \leq \beta$ where $\mathcal{P}(\epsilon_*) = \bigcup_{r\in \R_+^m}\mathcal{P}(\epsilon_*,r)$.

Furthermore, we say that $\epsilon_*$ is an optimal separation curve among a set of functions $\Epsilon$, if for any $\epsilon \in \Epsilon$ that significantly improves over $\epsilon_*$, no valid test can be powerful for hypotheses \eqref{eq:testing_with_unknown_accuracy}. That is, if there exists a positive constant $C$ that depends only on $\alpha$ and $\beta$  such that for every $\epsilon \in \Epsilon$, if there exists $r \in \R_+^m$ satisfying $\epsilon(r) < C\cdot \epsilon_*(r)$, then 
$R_*(\mathcal{P}(\epsilon)) > \beta$ where $\mathcal{P}(\epsilon) = \bigcup_{r\in \R_+^m}\mathcal{P}(\epsilon,r)$. 
\end{definition}

\section{Oracle inference with multiple predictions}\label{sec:oracle}

With a single prediction, there is no adaptivity gap. That is, a data-driven test can achieve the same rates as an oracle test that knows the predictions' accuracy. The structure of the problem forces a binary decision: either exploit the prediction with a projection test or ignore it with a chi-squared test. However, both tests must be used since the mean of the data distribution might be orthogonal to the prediction. Thus, the only advantage of the oracle test is knowing exactly when to run each test, which yields at most a constant-factor improvement in efficiency over non-oracle tests.

The situation changes when several predictions are available. Each additional prediction introduces a new direction along which deviations from the null hypothesis may occur. As a result, a test must determine not only whether to incorporate predictions, but also which ones to use to avoid diluting its power too much. An oracle, with knowledge of the accuracy of each prediction, can identify to what degree each prediction is informative and aggregate them optimally. A non-oracle test that lacks this information must hedge across all predicted directions and therefore cannot reliably achieve the same performance. As a result, oracle and non-oracle tests generally emphasize different directions in which to detect deviations from the null hypothesis, and this mismatch creates a non-constant performance gap.

Consider first the oracle setting, where under the alternative hypothesis we know the accuracies of the predictions:  \begin{equation}\label{eq:multiple_orthogonal_gaussian_seq_l2_testing}
H_0: \theta =0 \vs H_1: \norm{\theta}_2\geq \epsilon \textand \norm{\theta-\hat{\theta}_i}_2 \leq r_i \for i \in [m],
\end{equation}

Henceforth, let $B_r=\bigcap_{i=1}^{m}B_2^d(\hat{\theta}_{i},r_{i})$ denote the intersection of all the neighborhoods around the predictions. As commented in Section \ref{sec:overview_blackbox}, an oracle can detect when the predictions are uninformative, which geometrically corresponds to $S_\epsilon \subset B_r$, in which case hypotheses \eqref{eq:multiple_orthogonal_gaussian_seq_l2_testing} reduce to the well-studied hypotheses \eqref{eq:gsn_seq_expo_large_r}. In this case, the chi-squared test \eqref{eq:expo_chi_squared_test}, which discards all predictions, is optimal. That is, we reject whenever the $\ell_2$ norm of the data is larger than the expected deviations under the null hypothesis: \begin{equation}\label{eq:chi_squared_test}
\psi_{\chi^2}^{\alpha}(X)=I\left( n\cdot \norm{X}_2^2 \geq \chi^2_{d,1-\alpha} \right),
\end{equation} where $\chi^2_{d,1-\alpha}$ is the $(1-\alpha)$-quantile of the chi-squared distribution with $d$ degrees of freedom. 

Conversely, consider the case in which the predictions are highly informative; geometrically, this corresponds to the alternative hypothesis containing a single point closest to the null hypothesis. This happens whenever $S_\epsilon \cap B_r$ is empty or a singleton. In this case, using a likelihood-ratio test between $H_0: \theta=0$ and $H_1:\theta=\theta_*(r)$, see \eqref{eq:theta_star}, is optimal. The likelihood ratio test is equivalent to rejecting the null hypothesis whenever the projection of the data onto the direction of $\theta_*(r)$ is large enough: \begin{equation}\label{eq:likelihood_ratio_test}
\psi^{\alpha}_{\LR}(X) = I\left(\sqrt{n}\cdot \inner{X}{\frac{\theta_*(r)}{\norm{\theta_*(r)}_2}} \geq z_{1-\alpha}\right). 
\end{equation}

In general, using projections onto the predicted directions is a sensible choice, since under the alternative hypothesis, each predicted direction can serve as a means of detecting deviations from the null hypothesis.
\begin{remark}\label{remark:A_plus_definition} Let $h_i=\norm{\hat{\theta}_i}_2$ be the length of the $i$-th prediction, and $\hat{u}_i=\hat{\theta}_i/h_i$ its direction. Define $A_i(\epsilon,r)$ = $(\epsilon^2+h_i^2-r_i^2)/(2h_i)$. Then, under the alternative hypothesis \eqref{eq:multiple_orthogonal_gaussian_seq_l2_testing}, the length of the projection onto the $i$-th prediction is lower bounded:
$\norm{\Pi_{\hat{u}_i}\theta}_2^2  \geq A^2_{i,+}(\epsilon,r)$ where $A_{i,+}(\epsilon,r)=\max\left(0,A_{i}(\epsilon,r)\right)$ for $i \in [m]$.\end{remark} Given the geometric restrictions imposed by each prediction, it is not clear if the lower bounds in Remark \ref{remark:A_plus_definition} can be achieved. Henceforth, let $\gamma_*$ be the length of the smallest projection onto the span of the predictions under the alternative hypothesis \begin{equation}\label{eq:projection_prediction_span}
\gamma_*^2(\epsilon,r) = \inf_{\theta \in B_r\cap S_\epsilon} \norm{\Pi_{\hat{U}}\theta}_2^2\quad  \where \hat{U} = \left[\hat{u}_1,\dots,\hat{u}_m\right],
\end{equation} and let $\Rank \hat{U}=k$. If  $B_r\cap S_\epsilon\neq\emptyset$ and $k < d$, it holds that there exists a unique vector $A_*(\epsilon,r) \in \R^k$ that characterizes the projection of $\theta$ onto the span of the predictions, i.e., $\gamma_*^2(\epsilon,r)=\norm{A_*(\epsilon,r)}_2^2$. In particular, if all predictions are orthogonal, it holds that $A_*(\epsilon,r)=A_+(\epsilon,r)$, meaning that the lower bounds in Remark \ref{remark:A_plus_definition} are achieved. 

Furthermore, if $B_r\cap S_\epsilon\neq\emptyset$, hypotheses \eqref{eq:multiple_orthogonal_gaussian_seq_l2_testing} can be reduced to testing the hypotheses: \begin{equation}\label{eq:reduced_alternative}
H_0: \theta = 0 \vs H_1: \theta \in \arginf_{\theta \in S_\epsilon\cap B_r} \Corr\left(\Pi_{\hat{U}}\theta,\theta\right).
\end{equation} Therefore, to detect the alternative hypothesis, the oracle test only needs to measure how aligned $\Pi_{\hat{U}}X$ is with any element on the alternative hypothesis \eqref{eq:reduced_alternative}. This choice is immaterial because their projection onto the span of the predictions is fixed and given by $A_*(\epsilon,r)$. Thus, the natural test statistic for the oracle test is proportional to the plug-in estimator of the worst-case cosine similarity:  \begin{equation}\label{eq:correlation_test}
\psi_{\Corr}^{\alpha}(X) = I\left(\sqrt{n}\cdot \inner{\Pi_{\hat{U}}X}{\frac{\tilde{\theta}}{\norm{\tilde{\theta}}_2}}\geq z_{1-\alpha}\right) \where \tilde{\theta}=\tilde{U}_k A_*(\epsilon,r).
\end{equation} where $\tilde{U}_k=\left[\tilde{u}_1,\dots,\tilde{u}_k\right]$ are the $k$ left singular vectors of $\hat{U}$ associated with the non-zero singular values.

In summary, if $S_\epsilon$ and $B_r$ intersect, the oracle test runs the chi-squared test \eqref{eq:chi_squared_test} whenever the minimum projection onto the span of the predictions is too small, and the worst-case cosine similarity test \eqref{eq:correlation_test} otherwise. Alternatively, if $S_\epsilon$ and $B_r$ do not intersect, a likelihood ratio test \eqref{eq:likelihood_ratio_test} suffices since there is a unique point under the alternative that is closest to the null hypothesis. We define our oracle test as follows: \begin{equation}\label{eq:oracle_test}
\psi_{\text{Oracle}}(X) = \begin{cases}
\psi^{\alpha/2}_{\chi^2}(X) \lor \psi_{\Corr}^{\alpha/2}(X)&\textif S_\epsilon \cap B_r \neq\emptyset\\
\psi^{\alpha}_{\LR}(X) &\textif S_\epsilon \cap B_r=\emptyset
\end{cases}
\end{equation}

The following lemma summarizes the performance achieved by the oracle test. The proof is deferred to Appendix \ref{appx:ub_multiple_orthogonal_gaussian_seq_l2_testing} of the supplementary material.

\begin{lemma}[Performance of oracle test with multiple predictions]\label{lemma:upperbound_oracle_test_arbitrary_predictions} For hypotheses \eqref{eq:multiple_orthogonal_gaussian_seq_l2_testing}, the alternative hypothesis is non-vacuous if $B_r \not\subset B_\epsilon^\circ$. Furthermore, if $\Rank \hat{U} < d$, there exists a positive constant $C$ that depends only on $\alpha$ and $\beta$ such that the test \eqref{eq:oracle_test} is powerful whenever \begin{equation}
n \geq C \cdot \begin{dcases}
\frac{1}{\gamma_*^2(\epsilon,r)} \wedge \frac{\sqrt{d}}{\epsilon^2} &\textif S_\epsilon \cap B_r \neq \emptyset\\
%&\textif \gamma_*(\epsilon,r) \leq \epsilon\\
\frac{1}{\norm{\theta_*(r)}_2^2} &\textif S_\epsilon \cap B_r = \emptyset %\textif \gamma_*(\epsilon,r) > \epsilon
\end{dcases}\end{equation} 
We note that the rates are continuous since $\norm{\theta_*(r)}_2^2=\inf_{\theta \in B_r}\norm{\theta}_2^2=\inf_{\theta \in B_r}\norm{\Pi_{\hat{U}}\theta}_2^2$ if $S_\epsilon$ and $B_r$ do not intersect.
\end{lemma}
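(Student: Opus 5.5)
Three things need to be shown: the alternative is non-vacuous, the type-I error is controlled, and the type-II error is at most $\beta$ in each regime.

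\emph{Non-vacuity.} Since $B_r \not\subset B_\epsilon^\circ$, there is a point $\theta\in B_r$ with $\norm{\theta}_2\geq\epsilon$. That point belongs to the alternative of \eqref{eq:multiple_orthogonal_gaussian_seq_l2_testing}.

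\emph{Validity.} Each component test is exactly valid under $P_0$, where $nX\!\cdot\!X\sim\chi^2_d$ and $\sqrt n\inner{X}{v}\sim\Normal(0,1)$ for any fixed unit vector $v$. For the cosine test, $v=\tilde\theta/\norm{\tilde\theta}_2$ lies in the span of $\hat U$, so $\inner{\Pi_{\hat U}X}{v}=\inner{X}{v}$. Because $\tilde\theta$ depends only on $(\epsilon,r)$ and the predictions, not on the data, a union bound over the two level-$\alpha/2$ tests gives level $\alpha$.

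\emph{Power when $S_\epsilon\cap B_r\neq\emptyset$.} Fix $\theta$ in the alternative. The disjunction fails only if both tests fail, so it suffices that one of them succeeds with probability at least $1-\beta$.

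\textbf{Chi-squared branch.} Here $n\norm{X}_2^2$ is noncentral $\chi^2_d(n\norm{\theta}_2^2)$, with mean $d+n\norm{\theta}_2^2$ and variance $2d+4n\norm{\theta}_2^2$. By Chebyshev, together with $\chi^2_{d,1-\alpha/2}\leq d+C_\alpha\sqrt d$, the test is powerful once $n\epsilon^2\geq C\sqrt d$, using $\norm{\theta}_2\geq\epsilon$.

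\textbf{Cosine branch.} The statistic is $\Normal(\sqrt n\inner{\theta}{v},1)$, so power holds once $\sqrt n\inner{\theta}{v}\geq z_{1-\alpha/2}+z_{1-\beta}$. This reduces to the key geometric claim
$$\inf_{\theta\in S_\epsilon^{+}\cap B_r}\inner{\theta}{\tilde\theta}\big/\norm{\tilde\theta}_2\geq\gamma_*(\epsilon,r),\qquad S_\epsilon^{+}=\{\norm{\theta}_2\geq\epsilon\}.$$
I expect this to be the main obstacle.

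To prove it, I would first reduce from $\norm{\theta}_2\geq\epsilon$ to the sphere. Scaling a point toward $\hat\theta_i$ is not available, so instead I would use the fact that the feasible set is convex while the complement of $B_\epsilon^\circ$ is not. The route is a first-order (KKT) characterization of the minimizer of the linear functional $\theta\mapsto\inner{\theta}{\tilde\theta}$ over $B_r\setminus B_\epsilon^\circ$. Since $A_*$ is the unique projection vector, $\tilde\theta=\Pi_{\hat U}\theta^\star$ for some minimizer $\theta^\star$ of $\norm{\Pi_{\hat U}\theta}_2$ over $S_\epsilon\cap B_r$.

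Next I would show that $\tilde\theta$ is the minimum-norm point of the convex set $K=\Pi_{\hat U}(B_r\setminus B_\epsilon^\circ)$. The complement of the ball is handled through the $\Rank\hat U<d$ assumption: orthogonal components can be freely adjusted to restore $\norm{\theta}_2=\epsilon$ without changing the projection, and $\Pi_{\hat U}B_r$ is convex. The projection theorem then gives $\inner{y}{\tilde\theta}\geq\norm{\tilde\theta}_2^2$ for all $y\in K$, which is exactly the claim. Consequently $n\geq C/\gamma_*^2$ suffices, and taking the better of the two branches gives the stated minimum.

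\emph{Case $S_\epsilon\cap B_r=\emptyset$.} Here $B_r$ lies outside $B_\epsilon$. The point $\theta_*$ is the projection of $0$ onto the convex set $B_r$, so $\inner{\theta}{\theta_*}\geq\norm{\theta_*}_2^2$ for every $\theta\in B_r$. The Gaussian LR statistic then has mean at least $\sqrt n\norm{\theta_*}_2$, and $n\geq C/\norm{\theta_*}_2^2$ suffices.

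\emph{Continuity remark.} The same projection argument shows that $\theta_*$ lies in the span of the predictions. Indeed, $B_r$ is symmetric under reflections fixing that span, and $\theta_*$ is unique, so it must be fixed by those reflections. This gives $\norm{\theta_*}_2=\inf_{B_r}\norm{\Pi_{\hat U}\theta}_2$, and as $\epsilon$ grows toward $\norm{\theta_*}_2$ the quantity $\gamma_*$ approaches it.
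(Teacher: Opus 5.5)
Your proposal is correct and follows the paper's proof in substance. It uses the same Bonferroni combination of the chi-squared test with the projection onto $\tilde\theta=\tilde U_kA_*(\epsilon,r)$. It gets power for the projection test from the first-order optimality condition $\inner{a-A_*(\epsilon,r)}{A_*(\epsilon,r)}\geq 0$ for the minimum-norm point of a convex set. In the disjoint case it uses the likelihood-ratio test.

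The only real difference is that you explicitly handle the part of the alternative with $\norm{\theta}_2>\epsilon$. The paper simply asserts $\inf_{\theta\in H_1(\epsilon,r)}E_{P_\theta}[T_r]=\inf_{\theta\in S_\epsilon\cap B_r}E_{P_\theta}[T_r]$. It likewise asserts the LR power and the identity for $\norm{\theta_*(r)}_2$, both of which you justify.

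Your stated reason for the convexity of $K$ is incomplete, however. Moving along orthogonal directions to reach $S_\epsilon$ only works for points with $\norm{\tilde U_k^T\theta}_2\leq\epsilon$. Convexity of $\Pi_{\hat U}B_r$ does not by itself make the resulting union convex.

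A clean way to close this has two steps:
\begin{itemize}
\item Every $\theta\in B_r$ with $\norm{\theta}_2\geq\epsilon$ satisfies $\hat U^T\theta\geq A(\epsilon,r)$, because $A_i(\cdot,r)$ is increasing.
\item $A_*(\epsilon,r)$ is also the minimum-norm point of the polyhedron $\{a\in\R^k:\hat U^T\tilde U_ka\geq A(\epsilon,r)\}$. That polyhedron's minimum-norm point has norm at most $\norm{A_*(\epsilon,r)}_2\leq\epsilon$. By the characterization in the proof of Lemma \ref{lemma:Astar}, it therefore lies in $\{\tilde U_k^T\theta:\theta\in S_\epsilon\cap B_r\}$, and so it equals $A_*(\epsilon,r)$.
\end{itemize}
The projection theorem on this polyhedron then gives your key inequality. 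Equivalently, in coordinates $a=\tilde U_k^T\theta$, your $K$ is exactly the intersection of $\tilde U_k^TB_r$ with this polyhedron, and hence convex.
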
 

The first case shows that the oracle test achieves the prediction-free rate when predictions are uninformative, which has full dependence on the ambient dimension of the data. Conversely, when predictions are informative, the oracle test avoids any dependence on the ambient dimension of the data ($d$) or the dimension of the span of the predictions ($\Rank \hat{U}$). This improvement should come as no surprise, given that the oracle test relies on a one-dimensional projection that aggregates all predictions \eqref{eq:correlation_test}. The second case shows that when there is a single closest distribution under the alternative hypothesis, the oracle test achieves parametric rates.

\subsection{Optimality of the oracle test}

Let us first recall what is known regarding distinguishing the hypotheses \eqref{eq:gsn_seq_expo_large_r}, for which the chi-squared test \eqref{eq:chi_squared_test} is optimal \citep{ingsterAsymptoticallyMinimaxHypothesisI1993}. An optimality proof follows from proving that no test can distinguish the null distribution $P_0$ from those distributions in the alternative hypotheses that are closest. Given that any point on the $S_\epsilon$ sphere is equally challenging to distinguish, we put a uniform measure on the sphere.
\begin{equation}\label{eq:classic_nonparametric_lower_bound}
H_0: \theta=0 \vs H_1: \theta \sim \text{Uniform}\left[S_\epsilon\right].
\end{equation}

We can guarantee that the above construction is valid under hypotheses \eqref{eq:multiple_orthogonal_gaussian_seq_l2_testing} whenever $S_\epsilon \subseteq B_r$, see Figure \ref{fig:uninformative}. Thus, for big enough $B_r$, the lower bound from \citet{ingsterAsymptoticallyMinimaxHypothesisI1993} must hold.

\begin{remark}\label{remark:nonparametric} For hypotheses \eqref{eq:multiple_orthogonal_gaussian_seq_l2_testing}, there exists a positive constant $C$ that depends only on $\alpha$ and $\beta$, such that no valid test can be powerful whenever \begin{equation}
n \leq C\cdot \frac{\sqrt{d}}{\epsilon^2} \textand S_\epsilon \subseteq B_r.
\end{equation}
\end{remark}

Conversely, let us consider the case where the predictions are highly informative. Geometrically, this corresponds to the intersection of $S_\epsilon$ and $B_r$ containing at most one point, implying that, under the alternative hypothesis, there is a unique point closest to the null hypothesis. Consequently, distinguishing the hypotheses \eqref{eq:multiple_orthogonal_gaussian_seq_l2_testing} is equivalent to distinguishing the simple null hypothesis $H_0: \theta =0$ from the simple alternative hypothesis $H_1:\theta=\theta_*(r)$, see \eqref{eq:theta_star}.

\begin{remark}\label{remark:parametric} For hypotheses \eqref{eq:multiple_orthogonal_gaussian_seq_l2_testing}, there exists a positive constant $C$ that depends only on $\alpha$ and $\beta$, such that no valid test can be powerful whenever \begin{equation}
n \leq C \cdot \frac{1}{\norm{\theta_*(r)}_2^2} \textand S_\epsilon \cap B_r \text{ is a singleton or the empty set.}
\end{equation}
\end{remark}

In summary, whenever the predictions are uninformative, we can use the unrestricted construction \eqref{eq:classic_nonparametric_lower_bound} since $S_\epsilon\subseteq  B_r$, and consequently, we obtain the prediction-free rate by constructing a uniform measure on $S_\epsilon$ under the alternative hypothesis. Conversely, when the prediction is highly informative, we can use a simple binary hypothesis-testing construction, akin to assigning a \textit{uniform measure} supported on a single point under the alternative.

For the general case, see Figure \ref{fig:intersection}, based on \eqref{eq:classic_nonparametric_lower_bound}, a natural idea is to put a uniform measure on the spherical cap $S_\epsilon \cap B_r$. That is, using the alternative hypothesis $H_1: \theta \sim \text{Uniform}[S_\epsilon \cap B_r]$, which interpolates the previously mentioned cases. Although that construction is valid, a sufficient approach is to focus only on those points of the spherical cap that minimize the length of the projection onto the span of the predictions \eqref{eq:projection_prediction_span}. Namely, we look to distinguish \begin{equation}\label{eq:adversarial_alternative}
H_0: \theta =0 \vs H_1: \theta \sim \text{Uniform}\left[\left\{\theta\in S_\epsilon \cap B_r: \norm{\Pi_{\hat{U}}\theta}_2^2 = \gamma_*^2(\epsilon,r)\right\}\right].
\end{equation} The distributions under the alternative hypothesis have both the property of being as close as possible to the null hypothesis, and minimizing the amount of information that predictions provide on them. Based on this construction, we prove the following limit on the performance of any test that attempts to distinguish hypotheses \eqref{eq:multiple_orthogonal_gaussian_seq_l2_testing}. The proof of the following lemma is deferred to Appendix \ref{appx:lb_multiple_orthogonal_gaussian_seq_l2_testing} of the supplementary material.

\begin{lemma}\label{lemma:lowerbound_oracle_test_arbitrary_predictions} For $\Rank \hat{U}\leq d/2$ and hypotheses \eqref{eq:multiple_orthogonal_gaussian_seq_l2_testing}, there exists a positive constant $C$ that depends only on $\alpha$ and $\beta$, such that no valid test can be powerful whenever \begin{equation}
n \leq C \cdot \left[\frac{1}{\gamma_*^{2}(\epsilon,r)} \wedge \frac{\sqrt{d}}{\epsilon^{2}}\right] \textand S_\epsilon \cap B_r \neq \emptyset.
\end{equation}
\end{lemma}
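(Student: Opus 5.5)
We use Le Cam's two-point method with a prior supported on the adversarial set in \eqref{eq:adversarial_alternative}. Each $\theta$ in that set has the same projection onto the span of the predictions, so $\Pi_{\hat U}\theta=\tilde\theta:=\tilde U_k A_*(\epsilon,r)$ with $\norm{\tilde\theta}_2=\gamma_*(\epsilon,r)$. The orthogonal part $\theta_\perp=(I-\Pi_{\hat U})\theta$ has norm $\rho:=(\epsilon^2-\gamma_*^2)^{1/2}$. We first show that this set contains the whole sphere of radius $\rho$ in $V^\perp$, where $V=\Span\hat U$ has dimension $k\le d/2$. The constraints defining $B_r$ read
\[
\norm{\theta-\hat\theta_i}_2^2=\norm{\theta}_2^2-2\inner{\theta}{\hat\theta_i}+h_i^2 .
\]
On $S_\epsilon$ they depend on $\theta$ only through $\Pi_{\hat U}\theta$. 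Hence if one point $\tilde\theta+v$ with $v\perp V$ and $\norm{v}_2=\rho$ lies in $S_\epsilon\cap B_r$, then every such point does. Nonemptiness of $S_\epsilon\cap B_r$ and the definition of $A_*$ as an attained infimum give one such point. Under $H_1$ we therefore take $\theta=\tilde\theta+\rho\,\xi$ with $\xi\sim\text{Uniform}[S^{d-k-1}]$ in $V^\perp$.

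Next we bound the $\chi^2$ divergence between the mixture $\bar P=E_\xi P_{\tilde\theta+\rho\xi}$ and $P_0$. The Gaussian identity $E_{P_0}[\frac{dP_\theta}{dP_0}\frac{dP_{\theta'}}{dP_0}]=\exp(n\inner{\theta}{\theta'})$ gives
\[
1+\chi^2(\bar P,P_0)=E_{\xi,\xi'}\exp\!\left(n\gamma_*^2+n\rho^2\inner{\xi}{\xi'}\right)=e^{n\gamma_*^2}\,E\exp\!\left(n\rho^2\inner{\xi}{\xi'}\right).
\]
The inner product $\inner{\xi}{\xi'}$ of two independent uniform points on $S^{d-k-1}$ is sub-Gaussian with variance proxy of order $1/(d-k)\le 2/d$, since $k\le d/2$. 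This is the standard bound used for \eqref{eq:classic_nonparametric_lower_bound}. It yields
\[
E\exp\!\left(n\rho^2\inner{\xi}{\xi'}\right)\le \exp\!\left(c\,n^2\rho^4/d\right)\le \exp\!\left(c\,n^2\epsilon^4/d\right).
\]
Altogether $\chi^2(\bar P,P_0)\le \exp(n\gamma_*^2+c n^2\epsilon^4/d)-1$.

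When $n\le C[\gamma_*^{-2}\wedge \sqrt d\,\epsilon^{-2}]$, both terms in the exponent are bounded by small constants, so $\chi^2$ is small. The usual argument then finishes the proof: any valid test's type-I error plus worst-case type-II error is at least $1-\mathrm{TV}(\bar P,P_0)\ge 1-\sqrt{\chi^2}/2$. Hence no test with level $\alpha$ has type-II error below $\beta$ once $C$ is chosen, depending only on $\alpha,\beta$, so that $\sqrt{\chi^2}/2<1-\alpha-\beta$.

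We expect the main obstacle to be the geometric step: justifying that the minimizer set is exactly $\{\tilde\theta\}+\rho S_{V^\perp}$ and that $A_*$ is attained. This uses that $S_\epsilon\cap B_r$ is compact and nonempty. The case $\rho=0$ is harmless, since the prior becomes a point mass and the bound reduces to $e^{n\gamma_*^2}-1$. The sub-Gaussian bound on $\inner{\xi}{\xi'}$ can be cited, for instance via $\inner{\xi}{\xi'}\overset{d}{=}\xi_1$ together with the concentration of the uniform measure on the sphere.
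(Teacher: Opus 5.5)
Your proposal is correct and follows the paper's proof essentially step for step. You use the same prior $\theta=\tilde U_k A_*(\epsilon,r)+\rho\,\xi$ with $\xi$ uniform on the unit sphere of the orthogonal complement of the predictions' span, the same identity $1+\chi^2=e^{n\gamma_*^2}E\exp(n\rho^2\inner{\xi}{\xi'})$, the same sub-Gaussian bound exploiting $d-k\geq d/2$, and the same Le Cam/$\chi^2$ conclusion. The only cosmetic difference is in checking the support: you argue directly that on $S_\epsilon$ the constraints $\norm{\theta-\hat\theta_i}_2\le r_i$ depend only on $\Pi_{\hat U}\theta$, whereas the paper expresses the same fact as $\hat U^T\theta=\hat U^T\tilde\theta\geq A(\epsilon,r)$ via Lemma \ref{lemma:Astar}.
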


Together, Remarks \ref{remark:nonparametric} and \ref{remark:parametric}, and Lemma \ref{lemma:lowerbound_oracle_test_arbitrary_predictions} show that the performance of the oracle test in Lemma \ref{lemma:upperbound_oracle_test_arbitrary_predictions} is optimal up to constants. Furthermore, the remarks and lemmas \ref{lemma:upperbound_oracle_test_arbitrary_predictions} and \ref{lemma:lowerbound_oracle_test_arbitrary_predictions}  prove Theorems \ref{thm:oracle_rates_single_prediction} and \ref{thm:oracle_rates_orthogonal_predictions} in Section \ref{sec:overview_blackbox}.

\section{The cost of adapting to orthogonal predictions}\label{sec:orthogonal}

When predictions are orthogonal, each prediction contains some new information about the underlying data distribution. From the perspective of the adaptivity gap, this is a worst-case scenario. While the oracle test can optimally aggregate the predictions and avoid any dependence on the dimension of the span, non-oracle procedures might have to consider all possible predictions since each one of them potentially contains information about the data distribution not available in the others.

As discussed in Section \ref{sec:minimax}, we model the need to adapt to the unknown accuracies by requiring a test to distinguish between the hypotheses \eqref{eq:multiple_orthogonal_gaussian_seq_l2_testing} for any accuracy vector. Formally, we seek a test that distinguishes \begin{equation}\label{eq:union_gaussian_seq_l2_testing}
H_0: \theta = 0 \vs H_1: \exists r \in \R^{m}_+ \st \norm{\theta}_2\geq \epsilon(r) \textand \norm{\theta-\hat{\theta}_i}_2 \leq r_i \for i \in [m],
\end{equation} where the predictions are assumed to be orthogonal. Note that the separation curve $r\mapsto \epsilon(r)$ also depends on the sample size $n$. However, since $n$ remains fixed throughout this work, we omit this dependence from the notation.

We first approximate the projection statistic that drives the oracle test in \eqref{eq:correlation_test}. Under the alternative, by virtue of knowing the vector of accuracies, the oracle test can compute the worst cosine similarity \eqref{eq:correlation_test}. Without knowledge of the accuracy vector $r$, this information is not available. Nevertheless, we can still approximate the cosine similarity between $\Pi_{\hat{U}}\theta$ and $\theta$ by measuring the length of the projection of the data onto the span of the predictions:  \begin{equation}\label{eq:projection_test}
\psi^{\alpha}_{\proj}(X) = I\left(n \cdot \norm{\Pi_{\hat{U}}X}_2^2 \geq \chi^2_{\Rank \hat{U},1-\alpha}\right).
\end{equation} Analogously to the oracle test, we must also protect ourselves against the case where the predictions are uninformative. However, rather than running the chi-squared test \eqref{eq:chi_squared_test}, since $\norm{X}_2^2 = \norm{\Pi_{\hat{U}}X}_2^2 + \norm{\Pi_{\hat{U}^\perp}X}_2^2$, we can avoid reusing the data by simply rejecting the null hypothesis whenever the length of the projection onto the orthogonal complement of the predictions' span is large: \begin{equation}\label{eq:orthogonal_projection_test}
\psi^{\alpha}_{\proj^\perp}(X) = I\left(n \cdot \norm{\Pi_{\hat{U}^\perp}X}_2^2 \geq \chi^2_{d-\Rank \hat{U},1-\alpha}\right).
\end{equation} Thus, we might replace the oracle test \eqref{eq:oracle_test} with the following Bonferroni aggregation of \eqref{eq:projection_test} and \eqref{eq:orthogonal_projection_test}: \begin{equation}\label{eq:bonferroni_test_with_standard_projection}
\psi^{\alpha}(X) = \max\left(\psi^{\alpha/2}_{\proj}(X),\psi^{\alpha/2}_{\proj^\perp}(X)\right).
\end{equation}

The next lemma provides a sharp characterization of the performance of this test. Crucially, the required sample size depends on the dimension $m$ of the span of the predictions. Compared with the oracle bound in Lemma \ref{lemma:upperbound_oracle_test_arbitrary_predictions}, the required sample size increases by a factor of $\sqrt{m}$. This is expected since \eqref{eq:bonferroni_test_with_standard_projection} relies on a chi-squared test on the span of the predictions, which looks for deviations from the null hypothesis in all directions of the predictions' span. The proof is deferred to Appendix \ref{appx:ub_union_multiple_orthogonal_gaussian_seq_l2_testing} of the supplementary material.

\begin{lemma}[The cost of adaptation to orthogonal predictions]\label{lemma:upperbound_adaptive_test_orthogonal_predictions} For hypotheses \eqref{eq:union_gaussian_seq_l2_testing}, if the predictions are orthogonal, there exists a positive constant $C$ that depends only on $\alpha$ and $\beta$ such that the test \eqref{eq:bonferroni_test_with_standard_projection} is valid and powerful for the separation curve \begin{equation}\label{eq:epsilon_star_orthogonal_predictions}
\epsilon_*(r) = \inf\{\epsilon\geq 0 : n \geq C \cdot g(\epsilon,r)\}
\end{equation} where for  $m\leq d/2$, \begin{equation}
g(\epsilon,r)= \begin{dcases}
\frac{\sqrt{m}}{\gamma_*^{2}(\epsilon,r)} \wedge \frac{\sqrt{d}}{\epsilon^2} &\textif S_\epsilon \cap B_r \neq \emptyset \\%\gamma_*(\epsilon,r) \leq \epsilon\\
\frac{\sqrt{m}}{\norm{\theta_*(r)}_2^2} &\textif S_\epsilon \cap B_r = \emptyset %\gamma_*(\epsilon,r) > \epsilon
\end{dcases}
\end{equation} and for $m > d/2$,
$g(\epsilon,r)= \sqrt{d} \cdot \epsilon^{-2}$.
\end{lemma}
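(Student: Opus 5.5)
Validity of \eqref{eq:bonferroni_test_with_standard_projection} is immediate. Under $H_0$, $\sqrt{n}X\sim\Normal(0,I_d)$, so $n\norm{\Pi_{\hat U}X}_2^2\sim\chi^2_m$ and $n\norm{\Pi_{\hat U^\perp}X}_2^2\sim\chi^2_{d-m}$. Each component test therefore has level $\alpha/2$, and a union bound gives level $\alpha$. For power, fix an arbitrary $\theta$ in the alternative of \eqref{eq:union_gaussian_seq_l2_testing}, so that for some $r$ we have $\norm{\theta}_2\geq\epsilon_*(r)$ and $\theta\in B_r$. Under $P_\theta$, $n\norm{\Pi_{\hat U}X}_2^2$ is noncentral $\chi^2_m$ with noncentrality $n\norm{\Pi_{\hat U}\theta}_2^2$, and $n\norm{\Pi_{\hat U^\perp}X}_2^2$ is noncentral $\chi^2_{d-m}$ with noncentrality $n\norm{\Pi_{\hat U^\perp}\theta}_2^2$. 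The standard Chebyshev argument for noncentral chi-squared variables says a $\chi^2_k$ test with noncentrality $\lambda$ has power at least $1-\beta$ once $\lambda\geq C_{\alpha,\beta}\sqrt{k}$. This uses $\chi^2_{k,1-\alpha}\leq k+c_\alpha\sqrt{k}$ and $\mathrm{Var}=2k+4\lambda$; the case $k=1$ is covered by the same bound with adjusted constants. It therefore suffices to show that
\[
n\norm{\Pi_{\hat U}\theta}_2^2\geq C\sqrt{m}\quad\text{or}\quad n\norm{\Pi_{\hat U^\perp}\theta}_2^2\geq C\sqrt{d-m}.
\]

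The main step is a geometric lower bound on $\norm{\Pi_{\hat U}\theta}_2^2$ in terms of $\gamma_*$. Let $\epsilon'=\norm{\theta}_2\geq\epsilon_*(r)$. If $S_{\epsilon'}\cap B_r\neq\emptyset$, then $\theta\in S_{\epsilon'}\cap B_r$, so $\norm{\Pi_{\hat U}\theta}_2^2\geq\gamma_*^2(\epsilon',r)$ by definition. Since $\epsilon'\geq\epsilon_*(r)$, I want to conclude $n\geq C g(\epsilon',r)$. This needs $g(\cdot,r)$ to be non-increasing in $\epsilon$, or at least the set $\{\epsilon: n\geq Cg(\epsilon,r)\}$ to be upward closed. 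For orthogonal predictions, $\gamma_*^2=\norm{A_+}_2^2$ with $A_i=(\epsilon^2+h_i^2-r_i^2)/(2h_i)$ increasing in $\epsilon$, and $\sqrt d/\epsilon^2$ is decreasing. So $g$ is decreasing on the intersecting regime. In the disjoint regime, $\norm{\theta}_2^2\geq\norm{\theta_*(r)}_2^2$ and $\norm{\Pi_{\hat U}\theta}_2^2\geq\inf_{B_r}\norm{\Pi_{\hat U}\cdot}_2^2=\norm{\theta_*(r)}_2^2$, using the continuity remark after Lemma \ref{lemma:upperbound_oracle_test_arbitrary_predictions}. This makes $g$ monotone across the transition too. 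I expect this monotonicity and regime-switching bookkeeping to be the main obstacle. When $\epsilon'$ and $\epsilon_*(r)$ fall in different regimes I would argue via $B_r\cap S_\epsilon\ne\emptyset$ for all $\epsilon\in[\norm{\theta_*}_2,\max_{B_r}\norm{\cdot}_2]$ by connectedness of $B_r$.

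Given $n\geq Cg(\epsilon',r)$, the minimum in $g$ splits into two cases. If the minimum is $\sqrt m/\gamma_*^2$, then $n\norm{\Pi_{\hat U}\theta}_2^2\geq n\gamma_*^2\geq C\sqrt m$, and the projection test fires. If the minimum is $\sqrt d/\epsilon'^2$, then $n\norm{\theta}_2^2\geq C\sqrt d$. In that case either $\norm{\Pi_{\hat U}\theta}_2^2\geq\norm{\theta}_2^2/2$, giving $n\norm{\Pi_{\hat U}\theta}_2^2\geq C\sqrt d/2\geq C\sqrt m/2$, or $\norm{\Pi_{\hat U^\perp}\theta}_2^2\geq\norm{\theta}_2^2/2$, giving noncentrality at least $C\sqrt d/2\geq C\sqrt{d-m}/2$. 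Either way one component test has power $\geq1-\beta$. For $m>d/2$, only the second argument with $g=\sqrt d/\epsilon^2$ is needed; here $\sqrt m\asymp\sqrt d$ handles the case of degenerate $d-m$. The hypothesis $m\leq d/2$ is used only to keep $d-m\asymp d$ inside the chi-squared quantile bounds.
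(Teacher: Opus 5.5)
Your argument is correct, but it takes a different route from the paper's. The paper never analyzes the Bonferroni test \eqref{eq:bonferroni_test_with_standard_projection} directly. It proves the bound for the single weighted statistic $n\norm{\Pi_{\hat U}X}_2^2/\sqrt{k}+n\norm{\Pi_{\hat U^\perp}X}_2^2/\sqrt{d-k}$ and asserts that the Bonferroni version matches it up to constants. It then applies Chebyshev once and reduces power to $n\inf_{H_1}f(\theta)\ge C$, with $f(\theta)=\norm{\Pi_{\hat U}\theta}_2^2/\sqrt{k}+\norm{\Pi_{\hat U^\perp}\theta}_2^2/\sqrt{d-k}$. Finally it computes $\inf f$ over $H_1(\epsilon(r),r)$ explicitly, in two cases:
\begin{itemize}
\item in the intersecting case, a linear minimization over $\norm{\Pi_{\hat U}\theta}_2^2\in[\gamma_*^2,\epsilon^2]$ with the threshold $c_k$;
\item in the disjoint case, the reparametrization $\theta=\hat U x+\hat U^\perp z$ with the argument that $z=0$ is optimal, giving $\norm{\theta_*(r)}_2^2/\sqrt{k}$.
\end{itemize}
You instead apply the standard noncentral-$\chi^2$ power bound to each component test. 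In place of the exact minimization you use the fact that at least half of $\norm{\theta}_2^2$ lies in the span or in its complement. This costs a factor $2$ and avoids $c_k$ entirely.

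You also index by $\epsilon'=\norm{\theta}_2$ rather than by $\epsilon(r)$. Since $\theta\in S_{\epsilon'}\cap B_r$ always, your ``if'' is automatic, and the disjoint regime enters only through the definition of $\epsilon_*(r)$. You handle that regime via monotonicity of $g(\cdot,r)$ together with $\gamma_*^2\ge\norm{\theta_*(r)}_2^2$. This makes explicit the monotonicity of $\gamma_*$ in $\epsilon$ that the paper uses silently when it writes $\inf_{H_1(\epsilon(r),r)}\norm{\Pi_{\hat U}\theta}_2^2=\gamma_*^2(\epsilon(r),r)$, even though $H_1(\epsilon(r),r)$ contains points off the sphere. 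The paper's route gives one test with exact level and the slightly sharper expression $\gamma_*^2/\sqrt{k}+(\epsilon^2-\gamma_*^2)/\sqrt{d-k}$. Yours is more elementary and modular, concerns the test actually named in the statement, and handles the regimes more cleanly.

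Two small points. First, upward-closedness alone does not cover the boundary case $\epsilon'=\epsilon_*(r)$. There you also need $g(\cdot,r)$ to be right-continuous on $[\norm{\theta_*(r)}_2,\max_{B_r}\norm{\cdot}_2]$. This holds because $\norm{A_+(\epsilon,r)}_2$ and $\sqrt{d}/\epsilon^2$ are continuous in $\epsilon$; the paper makes the same attainment assumption implicitly. Second, your argument never actually uses $m\le d/2$, since $\sqrt{d}\ge\sqrt{d-m}$ always holds. Keeping the restriction is harmless, but it is not needed where you say it is.
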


\subsection{Characterization of optimal adaptation}

In this section, we show that Lemma \ref{lemma:upperbound_adaptive_test_orthogonal_predictions} is optimal in the relevant adaptive sense. The separation curve \eqref{eq:epsilon_star_orthogonal_predictions}, achieved by the adaptive test \eqref{eq:bonferroni_test_with_standard_projection}, is pointwise improvable. This is typical in adaptive testing problems \citep{gineMathematicalFoundationsInfinitedimensional2016}. For example, fix an accuracy vector $r_* \in \R_+^m$. A Bonferroni correction allows us to combine the adaptive test \eqref{eq:bonferroni_test_with_standard_projection} with the oracle test \eqref{eq:oracle_test} designed for the alternative
$
H_1: \norm{\theta}_2 \geq \epsilon_*(r_*) \textand \theta \in B_{r_*}.
$
This combined test has greater power at $r_*$. Such a gain, however, can occur only at a fixed number of accuracy vectors, and therefore does not contradict adaptive optimality.

We therefore prove optimality within a class of separation curves. This class contains curves that assign the same separation to accuracy vectors that yield the same minimal projection onto the span of the predictions. In other words, if two accuracy vectors lead to worst-case projections of equal length onto the predictions' span, then their induced alternative hypotheses are considered equally hard to distinguish from the null hypothesis. \begin{property}[Same separation for equivalent accuracy vectors]\label{property:separation_curve_orthogonal_predictions} Let $\tilde{\epsilon}$ be a separation curve. If $\tilde{\epsilon}(r) = \epsilon$ for  $r\in \R_+^m$, then  $\tilde{\epsilon}(r') = \epsilon$ for all $r' \in \R_+^m$ such that $\gamma_*(\epsilon,r)=\gamma_*(\epsilon,r')$.\end{property} 

Note that the property is natural, given that both the separation curves associated with the oracle test in Lemma \ref{lemma:upperbound_oracle_test_arbitrary_predictions} and the adaptive test in Lemma \ref{lemma:upperbound_adaptive_test_orthogonal_predictions} satisfy it. The next lemma shows that, among all separation curves satisfying Property \ref{property:separation_curve_orthogonal_predictions}, the separation curve in Lemma \ref{lemma:upperbound_adaptive_test_orthogonal_predictions} cannot be meaningfully improved.

\begin{lemma}[Optimality of the adaptive test \eqref{eq:bonferroni_test_with_standard_projection}]\label{lemma:lowerbound_adaptive_test_orthogonal_predictions} For $m \leq d/2$ and orthogonal predictions, consider hypotheses \eqref{eq:union_gaussian_seq_l2_testing} with a separation curve $\tilde{\epsilon}$ that has Property \ref{property:separation_curve_orthogonal_predictions}. If there exists $r_* \in \R^m_+$ such that $\tilde{\epsilon}(r_*) < C \cdot \epsilon_*(r_*)$ where \begin{equation}\label{eq:epsilon_star_orthogonal_predictions_lowerbound}
\epsilon_*(r) = \inf\left\{\epsilon\geq 0: n \geq C \cdot\left[\frac{\sqrt{m}}{\gamma_*^{2}(\epsilon,r)} \wedge \frac{\sqrt{d}}{\epsilon^{2}}\right] \textand   S_\epsilon \cap B_r \neq \emptyset \right\},
\end{equation} then no valid test can be powerful.
\end{lemma}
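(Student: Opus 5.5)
Property \ref{property:separation_curve_orthogonal_predictions} turns a single improved point $r_*$ into a whole family of improved points. We exploit this family by building a prior that mixes over many accuracy vectors, and then apply the usual second-moment (chi-square divergence) method. Fix $r_*$ with $\tilde\epsilon(r_*)=\epsilon<C\epsilon_*(r_*)$ and set $\gamma=\gamma_*(\epsilon,r_*)$. If $\sqrt{d}/\epsilon^2$ is the active term in \eqref{eq:epsilon_star_orthogonal_predictions_lowerbound}, the oracle lower bound of Lemma \ref{lemma:lowerbound_oracle_test_arbitrary_predictions} already finishes the argument, since the adaptive alternative contains the oracle one at $r_*$. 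So we may assume $n\le C\sqrt{m}/\gamma^2$ and $n\le C\sqrt d/\epsilon^2$.

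\textbf{Choosing the family of accuracy vectors.} Since the predictions are orthogonal, we may take $\hat u_i=e_i$ for $i\in[m]$, and then $A_*=A_+$ gives $\gamma^2=\sum_i A_{i,+}^2$. For each coordinate permutation or redistribution, we choose accuracy vectors $r'$ whose projection vector is $A_+(\epsilon,r')=a$, for any $a\in\R^m_+$ with $\norm{a}_2=\gamma$. We get such an $r'$ by solving $(\epsilon^2+h_i^2-r_i'^2)/(2h_i)=a_i$, or by taking $r_i'$ large when $a_i=0$. We then check that $S_\epsilon\cap B_{r'}\neq\emptyset$, which holds because $\gamma\le\epsilon$ and $m\le d/2$ leaves room in the complement. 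By Property \ref{property:separation_curve_orthogonal_predictions}, $\tilde\epsilon(r')=\epsilon$ for all of them. Hence the alternative contains every $\theta$ with $\norm{\theta}_2=\epsilon$ whose span component is $a$, with nonnegative coordinates, $\norm{a}_2=\gamma$, and the remaining $\sqrt{\epsilon^2-\gamma^2}$ of norm lying in the orthogonal complement. We will need to verify that such $\theta$ lies in $B_{r'}$, and this is where the exact choice of $r'$ matters.

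\textbf{Prior and second moment.} Take $\theta=(\gamma a/\norm{a}_2 ,\, \sqrt{\epsilon^2-\gamma^2}\,v)$, where $a$ is drawn uniformly from the positive orthant part of $S^{m-1}$ (or $a=\gamma m^{-1/2}(\text{random subset signs}\ge0)$, i.e.\ a sparse or Rademacher-type vector restricted to be nonnegative) and $v$ is drawn uniformly on $S^{d-m-1}$, independently. The chi-square divergence then factorizes:
\[
E_{\theta,\theta'}\exp(n\inner{\theta}{\theta'})=E\exp(n\inner{a}{a'})\cdot E\exp\bigl(n(\epsilon^2-\gamma^2)\inner{v}{v'}\bigr).
\]
The second factor is bounded by $1+o(1)$ when $n\epsilon^2\lesssim\sqrt{d-m}\asymp\sqrt d$, which is Ingster's classical computation. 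For the first factor, nonnegativity forces a nonzero mean, $E\inner{a}{a'}\approx\gamma^2\cdot 2/\pi$, so a naive positive-orthant prior fails.

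\textbf{Main obstacle and how to handle it.} The positivity constraint on $A_+$ is the crux. The fix I would try first is to center the mixture: use the data's invariance and note that the constant component $\bar a\propto\mathbf 1$ is common to all alternatives. We therefore decompose $a=c\mathbf 1/\sqrt m+b$ with $b\perp\mathbf 1$ random, taking $b$ as a random sign vector of size $\asymp\gamma/\sqrt m$ per coordinate, so that $a_i\ge0$ still holds. The common part contributes only $\exp(n c^2)$ with $c^2\le\gamma^2$, and this is $O(1)$ when $n\gamma^2\lesssim 1$. That is not enough in the regime $1\ll n\gamma^2\le C\sqrt m$. So instead I would take $a$ to be $\gamma/\sqrt{s}$ times the indicator of a uniformly random subset of size $s=m/2$. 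This choice makes $\inner{a}{a'}$ hypergeometric with mean $\gamma^2/2$, and its fluctuation term $E\exp\bigl(n(\inner{a}{a'}-\gamma^2/2)\bigr)$ is bounded when $n\gamma^2\lesssim\sqrt m$. The remaining mean shift $\gamma^2/2$ must be absorbed by letting the orthogonal-complement part carry a matching negative correlation: we adjust $v$ so that it is anti-correlated in a shared extra coordinate, or equivalently we compare against a null reference shifted by the common mean and use the triangle inequality in total variation. Making this cancellation rigorous, while keeping each $\theta$ inside some $B_{r'}$, is the step I expect to be hardest. Once it is done, TV$(P_0,\bar P)<1-\alpha-\beta$ yields the claim.
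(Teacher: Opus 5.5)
There is a genuine gap, and it is exactly at the step you flagged as hardest. Your final prior takes the span component to be $a=\gamma\,\mathbb{1}_S/\sqrt{s}$ with $|S|=s=m/2$. Every such $a$ has the same projection onto the common direction $\mathbb{1}/\sqrt{m}$, i.e.\ onto $\sum_i\hat u_i/\sqrt m$, namely $\gamma\sqrt{s/m}=\gamma/\sqrt2$. This is your earlier centered construction with $c^2=\gamma^2/2$, and it fails for the same reason. By Jensen, $1+\chi^2(P_\pi,P_0)=E\exp(n\inner{\theta}{\theta'})\ge\exp(n\norm{E_\pi\theta}_2^2)\ge\exp(n\gamma^2/2)$. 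More directly, the normalized statistic $\inner{X}{\sum_i\hat u_i}/\sqrt m$ is $\Normal(0,1/n)$ under $P_0$ and $\Normal(\gamma/\sqrt2,1/n)$ under every component of $P_\pi$. So the mixture really is detectable once $n\gamma^2\gg1$, which is precisely the regime $1\ll n\gamma^2\lesssim\sqrt m$ where the $\sqrt m$ adaptivity gap lives.

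Neither of your repairs can help. For i.i.d.\ draws $\theta,\theta'\sim\pi$, one always has $E\inner{\theta}{\theta'}=\norm{E_\pi\theta}_2^2\ge\norm{E a}_2^2$. No choice or coupling of the orthogonal-complement part can create the ``matching negative correlation''. That part is also invisible to the linear statistic above. The shifted-reference plus triangle-inequality route fails because the shifted reference is itself at total variation distance near $1$ from $P_0$.

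The missing idea is to take the sparsity to be $s=\sqrt m$, as the paper does. Then $E|S\cap S'|=s^2/m=1$, so the common mean contributes only $n\gamma^2/\sqrt m=O(1)$. Moreover, $|S\cap S'|\sim\text{Hypergeometric}(m,s,s)$ is dominated in convex order by $\text{Binomial}(s,s/m)$, which gives
\[
E\exp\Bigl(\tfrac{n\gamma^2}{s}\,|S\cap S'|\Bigr)\le \exp\Bigl(\tfrac{s^2}{m}\bigl(e^{n\gamma^2/s}-1\bigr)\Bigr) = \exp\bigl(e^{n\gamma^2/\sqrt m}-1\bigr),
\]
which is bounded when $n\gamma^2\lesssim\sqrt m$. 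The choice $s\asymp\sqrt m$ is forced. The mean term needs $n\gamma^2 s/m\lesssim1$, and the per-overlap increment needs $n\gamma^2/s\lesssim1$. With this prior, the rest of your outline goes through. You use Property~\ref{property:separation_curve_orthogonal_predictions} with $r_i=\sqrt{\epsilon^2+h_i^2-2h_i\gamma\, I(i\in S)/\sqrt{s}}$, so that $\theta$ lies on the boundary of $B_{r}$. The factorization and the sub-Gaussian bound on the complement factor then hold as you wrote them.

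Separately, your opening case split is unnecessary, and its claim is inaccurate. Lemma~\ref{lemma:lowerbound_oracle_test_arbitrary_predictions} only yields $n\lesssim 1/\gamma^2\wedge\sqrt d/\epsilon^2$, which can be much smaller than $\sqrt d/\epsilon^2$. The hypothesis $n<C[\sqrt m/\gamma^2\wedge\sqrt d/\epsilon^2]$ already gives both inequalities, and the product prior handles them simultaneously.
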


The proof is deferred to Appendix \ref{appx:lb_union_multiple_orthogonal_gaussian_seq_l2_testing} of the supplementary material. Furthermore, note that Lemma \ref{lemma:lowerbound_adaptive_test_orthogonal_predictions} proves that Lemma \ref{lemma:upperbound_adaptive_test_orthogonal_predictions} is sharp up to constants for the $S_\epsilon \cap B_r\neq \emptyset$ case. Together Lemmas \ref{lemma:lowerbound_adaptive_test_orthogonal_predictions} and \ref{lemma:upperbound_adaptive_test_orthogonal_predictions} prove Theorems \ref{thm:upperbound_adaptive_test_orthogonal_predictions} and \ref{thm:lowerbound_adaptive_test_orthogonal_predictions} in Section \ref{sec:overview_blackbox}.

Lemma \ref{lemma:lowerbound_adaptive_test_orthogonal_predictions} strengthens the usual adaptivity guarantee, which only establishes optimality of the separation curve $\epsilon_*$ against any \textit{fully dominated} separation curve \citep{gineMathematicalFoundationsInfinitedimensional2016}. Such guarantees rule out any valid test with nontrivial power against a candidate curve $\tilde{\epsilon}$ satisfying $\tilde{\epsilon}(r) < C\cdot \epsilon_*(r)$ for every $r\in \R_+^m$. Our lower bound reaches the same conclusion under a weaker requirement: $\epsilon_*$ only needs to dominate $\tilde{\epsilon}$ on a particular subset of accuracy vectors.

To see why the dependence on the dimension of the prediction span is sharp, consider a weak oracle that, under the alternative hypothesis, knows the norm of the projection onto the prediction span but not its direction. This oracle must distinguish \begin{equation}\label{eq:detection_on_predictions_span_with_unknown_direction}
H_0: \hat{U}^T\theta = 0 \vs H_1: \exists y \in S_2^{m-1} \st  \norm{\hat{U}^T\theta}_2 = \gamma \textand \hat{U}^T\theta = \gamma \cdot y
\end{equation} where $\gamma$ is known. An adversary can make this problem difficult by randomizing the direction of $\hat{U}^T\theta$. In particular, under the alternative, the adversary may choose $
H_1: \hat{U}^T\theta/\gamma \sim \text{Uniform}(S_2^{m-1})$. This distribution spreads the projected signal uniformly across all directions in the prediction span. A useful test must therefore search over the entire unit sphere in $\R^m$ rather than inspect a known direction. This mirrors the lower bound in \eqref{eq:classic_nonparametric_lower_bound}, where randomizing the alternative forces the test to look for deviations in all directions. Consequently, the oracle can distinguish the hypotheses only when $n \geq \sqrt{m}\cdot\gamma^{-2}$. Thus, no test for \eqref{eq:detection_on_predictions_span_with_unknown_direction} can remove the $\sqrt{m}$ dependence.

A related construction yields a lower bound for the original problem. Under the hypotheses \eqref{eq:union_gaussian_seq_l2_testing}, an adversary can draw random points under the alternative hypothesis satisfying \begin{equation}\label{eq:worst_loose_alternative}
H_1: \norm{\theta}_2 = \epsilon \textand \frac{\hat{U}^T\theta}{\norm{\hat{U}^T\theta}_2} \sim \text{Uniform}(S_2^{m-1}).
\end{equation} This distribution captures the correct $\sqrt{m}$ dependence but does not correctly encode the role of $\gamma_*(\epsilon,r)=\norm{A_+(\epsilon,r)}_2$. The issue arises because $\hat{U}^T\theta$ may contain negative coordinates, which increase $\norm{\hat{U}^T\theta}_2$ relative to the optimal norm $\norm{(\hat{U}^T\theta)_+}_2$. To obtain a tight lower bound, we must restrict the distribution to be supported on the positive orthant. However, replacing $S_2^{m-1}$ by $S_2^{m-1} \cap \{\theta \in \R^m : \theta \geq 0\}$ in \eqref{eq:worst_loose_alternative}  does not work because the uniform measure on the positive orthant remains too diffuse. That is, the concentration of one-dimensional projections of said random vectors depends on the dimension $m$. Instead, we must support the distribution on a smaller structured subset of the positive orthant. Let $\mathcal{S}$ denote the collection of all subsets of size $\sqrt{m}$ from $[m]$. Under the alternative hypothesis, the adversary chooses \begin{equation}\label{eq:worst_alternative}
H_1:\norm{\theta}_2 = \epsilon \textand  \frac{\hat{U}^T\theta}{\norm{\hat{U}^T\theta}_2} = \mathbb{1}_S \cdot m^{-1/4} \where  S \sim \text{Uniform}(\mathcal{S})
\end{equation} and $[\mathbb{1}_S]_i=I(i \in S)$. Lemma \ref{lemma:lowerbound_adaptive_test_orthogonal_predictions} utilizes this random alternative to prove that the rates attained by test \eqref{eq:bonferroni_test_with_standard_projection} are optimal for the $S_\epsilon \cap B_r \neq \emptyset$ case.

\section{The cost of adapting to arbitrary predictions}\label{sec:arbitrary}

In the previous section, we considered only orthogonal predictions. In that setting, the test's performance depends on the ambient dimension of the span of predictions, because each prediction provides information about the alternative that the others cannot represent. This becomes overly pessimistic when predictions are not orthogonal, since many predictions may cluster tightly, especially when they are obtained by retraining the same predictive method on different subsamples of the available data. 

Henceforth, we consider the realistic scenario in which the accuracy of the predictions is unknown, and the predictions are not assumed to be orthogonal. That is, we distinguish hypotheses \eqref{eq:union_gaussian_seq_l2_testing} where the predictions span a $k$-dimensional subspace. That is, $\Rank \hat{U}= k \leq \min(m,d)$.

To understand how non-orthogonality changes the geometry of the problem, consider using the projection test \eqref{eq:bonferroni_test_with_standard_projection} we employed for the case of orthogonal predictions. The following lemma shows that the performance of the test depends on the ambient dimension of the predictions' span, even if all predictions are highly aligned. The proof is deferred to Appendix \ref{appx:ub_union_multiple_gaussian_seq_l2_testing} of the supplementary material.

\begin{lemma}[Performance of the adaptive test that uses standard projection]\label{lemma:performance_adaptive_test_with_standard_projection} For hypotheses \eqref{eq:union_gaussian_seq_l2_testing}, there exists a positive constant $C$ that depends only on $\alpha$ and $\beta$ such that the adaptive test using standard projection \eqref{eq:bonferroni_test_with_standard_projection} is powerful for the separation curve \begin{equation}\label{eq:epsilon_star_arbitary_predictions_with_projection_test}
\epsilon_*(r) = \inf\{\epsilon\geq 0 : n \geq C \cdot g(\epsilon,r)  \} \end{equation} where \begin{equation}
g(\epsilon,r)=  \begin{dcases}
\frac{\sqrt{k}}{\gamma_*^2(\epsilon,r)} \wedge \frac{\sqrt{d}}{\epsilon^{2}} &\textif S_\epsilon \cap B_r \neq \emptyset\\ %\gamma_*(\epsilon,r) \leq \epsilon \\
\frac{\sqrt{k}}{\norm{\theta_*(r)}_2^{2}} &\textif S_\epsilon \cap B_r = \emptyset %\gamma_*(\epsilon,r) > \epsilon
\end{dcases}
\end{equation} %and $\norm{\theta_*(r)}_2^2=\inf\{\epsilon^2\geq 0: \epsilon^2\geq \gamma_*^2(\epsilon,r)\}$.
\end{lemma}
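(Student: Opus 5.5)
We will prove the lemma by analyzing the two components of the Bonferroni test \eqref{eq:bonferroni_test_with_standard_projection} separately and using that, under any alternative $\theta$, the test is powerful as soon as either component is. Validity is immediate: under $H_0$, $n\norm{\Pi_{\hat{U}}X}_2^2\sim\chi^2_k$ and $n\norm{\Pi_{\hat{U}^\perp}X}_2^2\sim\chi^2_{d-k}$, so each component has level $\alpha/2$ and a union bound gives level $\alpha$. For power, fix $\theta$ in the alternative, so there is $r$ with $\theta\in B_r$ and $\norm{\theta}_2\geq\epsilon_*(r)$. Set $\epsilon=\norm{\theta}_2$ and note $\theta\in S_\epsilon\cap B_r$. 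Under $P_\theta$, $n\norm{\Pi_{\hat{U}}X}_2^2$ is noncentral $\chi^2_k$ with noncentrality $n\norm{\Pi_{\hat{U}}\theta}_2^2$, and similarly for the complement with $d-k$ degrees of freedom.

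The key tool is the standard noncentral chi-squared estimate. If $Y\sim\chi^2_p(\lambda)$, then $E Y=p+\lambda$ and $VY=2p+4\lambda$. Moreover $\chi^2_{p,1-\alpha}\leq p+C_\alpha\sqrt{p}$, for instance via Laurent--Massart. Chebyshev then shows $P(Y<\chi^2_{p,1-\alpha/2})\leq\beta$ once $\lambda\geq C\sqrt{p}$, with $C$ depending only on $\alpha,\beta$. This also covers $p=1$, where it reduces to a Gaussian tail bound.

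There are two cases.

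\textbf{Case (a): $S_\epsilon\cap B_r\neq\emptyset$.} By definition \eqref{eq:projection_prediction_span}, $\norm{\Pi_{\hat{U}}\theta}_2^2\geq\gamma_*^2(\epsilon,r)$. We may assume $\epsilon=\norm{\theta}_2\geq\epsilon_*(r)$ satisfies $n\geq Cg(\epsilon,r)$; the needed monotonicity is addressed below. This gives two sub-cases:
\begin{itemize}[label={}]
\item If $n\gamma_*^2\geq C\sqrt{k}$, the projection test has power.
\item If $n\epsilon^2\geq C\sqrt{d}$, then either $n\norm{\Pi_{\hat{U}}\theta}_2^2\geq n\epsilon^2/2\geq (C/2)\sqrt{d}\geq(C/2)\sqrt{k}$, or $n\norm{\Pi_{\hat{U}^\perp}\theta}_2^2\geq n\epsilon^2/2\geq (C/2)\sqrt{d-k}$. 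In either event the corresponding component has power.
\end{itemize}
This split of $\norm{\theta}_2^2$ into the two orthogonal pieces replaces the chi-squared test on all of $X$.

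\textbf{Case (b): $S_\epsilon\cap B_r=\emptyset$.} Here $\norm{\Pi_{\hat{U}}\theta}_2^2\geq\inf_{\theta'\in B_r}\norm{\Pi_{\hat{U}}\theta'}_2^2=\norm{\theta_*(r)}_2^2$, using the identity noted after Lemma \ref{lemma:upperbound_oracle_test_arbitrary_predictions}. Hence $n\norm{\theta_*(r)}_2^2\geq C\sqrt{k}$ suffices for the projection component.

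The main obstacle is that $\epsilon_*(r)$ is defined as an infimum, while the true $\theta$ has norm $\norm{\theta}_2\geq\epsilon_*(r)$, possibly strictly larger. We must therefore check that $n\geq Cg(\norm{\theta}_2,r)$ holds, i.e.\ that the sufficient condition is monotone in $\epsilon$. The term $\sqrt{d}/\epsilon^2$ is clearly decreasing. For $\gamma_*^2(\epsilon,r)$, I would show that $\epsilon\mapsto\gamma_*(\epsilon,r)$ is nondecreasing on $\{\epsilon: S_\epsilon\cap B_r\neq\emptyset\}$, handled as follows.
\begin{itemize}[label={}]
\item Take $\theta\in S_{\epsilon'}\cap B_r$ with $\epsilon'>\epsilon$. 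Move along the segment from $\theta$ toward the minimizer $\theta_*(r)$ of the norm over the convex set $B_r$; this segment stays in $B_r$ and crosses $S_\epsilon$.
\item The projection norm along the segment is convex, so on the segment it is at most its value at one of the endpoints, $\theta$ or $\theta_*(r)$. Since $\theta_*(r)\in B_r$, its projection norm is at least $\norm{\theta_*(r)}_2$; as $\norm{\theta_*(r)}_2\le\epsilon$, this is $\le\gamma_*(\epsilon,r)$ if needed.
\end{itemize}
If this geometric argument gets messy, the fallback avoids monotonicity altogether. Apply the argument directly with the crossing point $\theta'\in S_{\epsilon_*}\cap B_r$ on that segment, and compare $\Pi_{\hat{U}}\theta$ with $\Pi_{\hat{U}}\theta'$ through convexity of $\norm{\Pi_{\hat{U}}\cdot}_2$. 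Both cases (a) and (b) are then recovered, possibly after enlarging $C$.
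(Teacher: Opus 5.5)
Your proof is correct and follows the same skeleton as the paper's (the paper omits this proof as analogous to that of Lemma~\ref{lemma:upperbound_adaptive_test_arbitrary_predictions}): exact null distributions for validity, Chebyshev for each noncentral chi-squared component, Bonferroni, and the lower bounds $\norm{\Pi_{\hat{U}}\theta}_2\geq\gamma_*(\epsilon,r)$ or $\norm{\Pi_{\hat{U}}\theta}_2\geq\norm{\theta_*(r)}_2$. You differ from the paper in two useful ways.

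\begin{itemize}
\item You analyze the complement test $\psi_{\proj^\perp}$ directly, by noting that at least half of $\norm{\theta}_2^2$ lies in either $\Span\hat{U}$ or its complement. The paper's templates instead pair a projection-type test with the full chi-squared test (Lemma~\ref{lemma:upperbound_adaptive_test_arbitrary_predictions}), or use a weighted statistic with a $k\leq d/2$ case split and only assert that it is equivalent to the Bonferroni test (Lemma~\ref{lemma:upperbound_adaptive_test_orthogonal_predictions}). Your split needs no restriction on $k$, which matches the statement.
\item You make explicit the monotonicity in $\epsilon$ that the paper uses silently. The paper replaces the infimum over $\{\norm{\theta}_2\geq\epsilon(r)\}\cap B_r$ by the infimum over $S_{\epsilon(r)}\cap B_r$ without justification.
\end{itemize}

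Two points in the write-up need repair.

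\textbf{The case split.} With $\epsilon=\norm{\theta}_2$ you always have $\theta\in S_\epsilon\cap B_r$, so case (b) as stated is vacuous. Also, monotonicity restricted to $\{\epsilon: S_\epsilon\cap B_r\neq\emptyset\}$ does not cover the jump of $g$ between its two branches. Split instead on whether $S_{\epsilon_*(r)}\cap B_r$ is empty.
\begin{itemize}
\item If it is empty and the alternative for this $r$ is nonempty, then $\epsilon_*(r)<\norm{\theta_*(r)}_2$. Hence the defining condition holds on the branch $g=\sqrt{k}/\norm{\theta_*(r)}_2^2$, i.e.\ $n\norm{\theta_*(r)}_2^2\geq C\sqrt{k}$. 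Your case (b) bound then gives power for every $\theta\in B_r$, whatever its norm.
\item Otherwise, $\epsilon_*(r)$ and $\norm{\theta}_2$ both lie in the interval $[\norm{\theta_*(r)}_2,\max_{B_r}\norm{\cdot}_2]$ where the intersection is nonempty. Monotonicity of $\gamma_*(\cdot,r)$ then finishes the argument.
\end{itemize}

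\textbf{The segment argument.} The clause ``$\norm{\theta_*(r)}_2\leq\epsilon$'' is not what closes this step. What you need is $\norm{\Pi_{\hat{U}}\theta}_2\geq\norm{\theta_*(r)}_2\geq\norm{\Pi_{\hat{U}}\theta_*(r)}_2$ for every $\theta\in B_r$, which is the identity you already use in case (b). Then at the crossing point $\theta_t\in S_\epsilon\cap B_r$, convexity gives $\gamma_*(\epsilon,r)\leq\norm{\Pi_{\hat{U}}\theta_t}_2\leq\norm{\Pi_{\hat{U}}\theta}_2$.

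A shorter route avoids the segment entirely. If $\norm{\Pi_{\hat{U}}\theta}_2\geq\epsilon$, there is nothing to prove, since $\gamma_*(\epsilon,r)\leq\epsilon$. Otherwise, shrink the component of $\theta$ orthogonal to $\Span\hat{U}$ until the norm equals $\epsilon$. This leaves $\Pi_{\hat{U}}\theta$ unchanged and can only decrease each $\norm{\theta-\hat{\theta}_i}_2$, because $\hat{\theta}_i\in\Span\hat{U}$.
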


We could have foreseen these results by noting that the test discards useful information by considering all the dimensions of the span of the predictions equally important, since $
\norm{\Pi_{\hat{U}}X}_2^2 = \sum_{i=1}^k \norm{\Pi_{\tilde{u}_i}X}_2^2$ where $\tilde{u}_1,\dots,\tilde{u}_k$ are the left singular vectors of $\hat{U}$ associated with the non-zero singular values, and therefore it must pay for looking for deviations from the null on each of them.

A better approach to reduce the strong dependence on the dimension of predictions' span is to hedge our bets and rely mostly on those directions in the span of predictions that are aligned with most of the predictions. As mentioned in the introduction, this idea leads to the test statistic that we call \textit{intrinsic projection} of the data onto the span of the predictions \eqref{eq:intrinsinc_projection_span_of_predictions}: $
\norm{\Pi^{\intrinsic}_{\hat{U}}X}_2^2=\sum_{i=1}^k (\lambda_i^2/\lambda^2_1) \cdot \norm{\Pi_{\tilde{u}_i}X}_2^2$. When the predictions are orthogonal, intrinsic projection coincides with standard projection $\norm{\Pi^{\intrinsic}_{\hat{U}}X}_2^2=\norm{\Pi_{\hat{U}}X}_2^2$ since the cosine similarity matrix becomes the identity matrix. However, when this is not the case, the weighting automatically disregards directions that are not aligned with most of the predictions, thereby avoiding reliance on the span of the predictions' dimension. 

Under the null hypothesis, the test statistic follows a mixture of chi-squared distributions for which the appropriate quantile can be computed. Thus, we can use the following test on the intrinsic projection.  \begin{equation}\label{eq:intrinsic_projection_test}
\psi^{\alpha}_{\intrinsic}(X) = I\left(n\cdot \norm{\Pi^{\intrinsic}_{\hat{U}}X}_2^2 \geq q_{1-\alpha}\right) 
\end{equation} where $q_{1-\alpha}$ is the $(1-\alpha)$-quantile of $\sum_{i=1}^k (\lambda_i^2/\lambda^2_1) \cdot \chi^2_1$. The term intrinsic reflects the fact that the performance of the test depends not on the dimension of the predictions' span itself, but on the intrinsic dimension of the squared cosine similarity matrix: \begin{equation}\label{eq:intrinsic_dimension}
\intrinsic \Sigma^2  = \frac{\trace \Sigma^2}{\norm{\Sigma^2}_{\op}} = \sum_{i=1}^k\frac{\lambda_i^4}{\lambda_1^4}
\end{equation} 

This quantity provides a stable estimate of the effective dimensionality of the span of the predictions, as it uses the full singular value distribution rather than only counting nonzero directions. If all predictions coincide, their span has both an intrinsic and an ambient dimension of $1$. A small perturbation can sharply increase the ambient dimension even when the predictions remain highly aligned. However, the intrinsic dimension of $\Sigma^2$ will remain close to $1$ since its eigenvalues will decay rapidly.

Analogously to the case of orthogonal predictions, we expect the intrinsic projection test \eqref{eq:intrinsic_projection_test} to have no power when the alternative lives in the orthogonal complement of the span of the predictions, which can be detected by using the chi-squared test \eqref{eq:chi_squared_test}. Thus, we consider the aggregation test \begin{equation}\label{eq:bonferroni_test_with_intrinsic_projection}
\psi^{\alpha}(X) = \max\left( \psi_{\intrinsic}^{\alpha/2}(X), \psi_{\chi^2}^{\alpha/2}(X) \right).
\end{equation} The following lemma summarizes the rates achieved by the test \eqref{eq:bonferroni_test_with_intrinsic_projection}. The proof is deferred to Appendix \ref{appx:ub_union_multiple_gaussian_seq_l2_testing} of the supplementary material. Furthermore, we note that Theorem \ref{thm:upperbound_adaptive_test_arbitrary_predictions} in Section \ref{sec:overview_blackbox} is a consequence of the following lemma.

\begin{lemma}[Performance of the adaptive test that uses intrinsic projection ]\label{lemma:upperbound_adaptive_test_arbitrary_predictions}For hypotheses \eqref{eq:union_gaussian_seq_l2_testing}, there exists a positive constant $C$ that depends only on $\alpha$ and $\beta$ such that the adaptive test using intrinsic projection  \eqref{eq:bonferroni_test_with_intrinsic_projection} is powerful for the separation curve \begin{equation}\label{eq:epsilon_star_arbitary_predictions}
\epsilon_*(r) = \inf\{\epsilon\geq 0 : n \geq C \cdot g(\epsilon,r)  \} \end{equation} where \begin{equation}
g(\epsilon,r)=  \begin{dcases}
\frac{\sqrt{\intrinsic \Sigma^2}}{\tilde{\gamma}_*^2(\epsilon,r)} \wedge \frac{\sqrt{d}}{\epsilon^{2}} &\textif S_\epsilon \cap B_r \neq \emptyset\\ %\gamma_*(\epsilon,r) \leq \epsilon \\
\frac{\sqrt{\intrinsic \Sigma^2}}{\norm{\tilde{\theta}_*(r)}_2^{2}} &\textif S_\epsilon \cap B_r = \emptyset %\gamma_*(\epsilon,r) > \epsilon
\end{dcases} 
\end{equation} and \begin{equation}
\tilde{\gamma}_*^2(\epsilon,r)=\inf_{\theta \in S_\epsilon \cap B_r}\norm{\Pi^{\intrinsic}_{\hat{U}}\theta}_2^2  \textand \norm{\tilde{\theta}_*(r)}_2^{2}=\inf_{\theta \in B_r}\norm{\Pi^{\intrinsic}_{\hat{U}}\theta}_2^2.
\end{equation}
\end{lemma}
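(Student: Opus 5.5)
The plan is to treat the two components of the Bonferroni test \eqref{eq:bonferroni_test_with_intrinsic_projection} separately. Validity is immediate: under $H_0$, $\sqrt{n}X\sim\Normal(0,I_d)$, so $n\norm{\Pi^{\intrinsic}_{\hat U}X}_2^2$ has exactly the law $\sum_{i=1}^k(\lambda_i^2/\lambda_1^2)\chi^2_1$, and $n\norm{X}_2^2\sim\chi^2_d$. A union bound then gives type-I error at most $\alpha$. For power, fix any $\theta$ in the alternative, i.e.\ some $r$ with $\theta\in B_r$ and $\norm{\theta}_2\ge\epsilon_*(r)$. It suffices to show that one of the two tests rejects with probability at least $1-\beta$. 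The $\chi^2$ branch is the classical argument behind Remark \ref{remark:nonparametric}: $n\norm{X}_2^2$ is noncentral $\chi^2_d$ with noncentrality $n\norm{\theta}_2^2$. Chebyshev together with $\chi^2_{d,1-\alpha/2}\le d+C_\alpha\sqrt d$ shows it is powerful once $n\norm{\theta}_2^2\ge C\sqrt d$. This handles the $\sqrt d/\epsilon^2$ branch, since $\norm{\theta}_2\ge\epsilon$.

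For the intrinsic branch, write $w_i=\lambda_i^2/\lambda_1^2\in(0,1]$ and $T=n\sum_i w_i\inner{\tilde u_i}{X}^2$. Then $T=\sum_i w_i(Z_i+\sqrt n\,\mu_i)^2$ with $Z_i$ i.i.d.\ standard normal and $\mu_i=\inner{\tilde u_i}{\theta}$. Its moments are
\[
E[T]=\sum_i w_i+n\norm{\Pi^{\intrinsic}_{\hat U}\theta}_2^2,\qquad V[T]=2\sum_i w_i^2+4n\sum_i w_i^2\mu_i^2\le 2\,\intrinsic\Sigma^2+4n\norm{\Pi^{\intrinsic}_{\hat U}\theta}_2^2,
\]
where the inequality uses $w_i\le1$ and $\sum_i w_i^2=\intrinsic\Sigma^2$ by \eqref{eq:intrinsic_dimension}. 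Under the null, Chebyshev bounds the quantile as $q_{1-\alpha/2}\le\sum_i w_i+C_\alpha\sqrt{\intrinsic\Sigma^2}$. Applying Chebyshev again under the alternative, the test rejects with probability at least $1-\beta$ as soon as $\Delta:=n\norm{\Pi^{\intrinsic}_{\hat U}\theta}_2^2\ge C\sqrt{\intrinsic\Sigma^2}$. In that regime the $\sqrt{\Delta}$ term in the standard deviation is dominated by $\Delta$. This is the standard Ingster-type computation, and $\intrinsic\Sigma^2$ plays the role that $d$ plays in the chi-squared test.

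The remaining step, which I expect to be the main obstacle, is converting the pointwise condition into the separation curve. The difficulty is that the alternative only gives $\norm{\theta}_2\ge\epsilon_*(r)$, whereas $\tilde\gamma_*(\epsilon,r)$ is an infimum over the sphere $S_\epsilon$, not over all $\norm\theta_2\ge\epsilon$. Set $\rho=\norm{\theta}_2\ge\epsilon_*(r)$, so that $\theta\in S_\rho\cap B_r$. The argument splits into two cases.
\begin{itemize}
\item If $\rho$ already satisfies $n\ge C\sqrt d/\rho^2$, the $\chi^2$ branch handles $\theta$.
\item Otherwise, we need $n\ge C\sqrt{\intrinsic\Sigma^2}/\tilde\gamma_*^2(\rho,r)$, which gives $\Delta\ge n\tilde\gamma_*^2(\rho,r)\ge C\sqrt{\intrinsic\Sigma^2}$.
\end{itemize}
The second case needs monotonicity: $\epsilon\mapsto\sqrt{\intrinsic\Sigma^2}/\tilde\gamma^2_*(\epsilon,r)\wedge\sqrt d/\epsilon^2$ should be nonincreasing on the range where $S_\epsilon\cap B_r\ne\emptyset$, so that every $\rho\ge\epsilon_*(r)$ still meets the defining inequality. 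I would prove that $\tilde\gamma_*(\epsilon,r)$ is nondecreasing in $\epsilon$ by a convexity argument. Given $\theta\in S_\rho\cap B_r$ with $\rho>\epsilon$, take the segment from $\theta$ to $\theta_*(r)$; it lies in $B_r$ by convexity and passes through some $\theta'\in S_\epsilon$. The map $\norm{\Pi^{\intrinsic}_{\hat U}\cdot}_2^2$ is a convex quadratic, so along the segment its value at $\theta'$ is at most the maximum of its values at the two endpoints. Since $\theta_*(r)$ minimizes the norm over $B_r$ and lies in the span direction, its value is no larger than that at $\theta$. Together these give $\tilde\gamma^2_*(\epsilon,r)\le\norm{\Pi^{\intrinsic}_{\hat U}\theta}_2^2$.

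If this monotonicity argument proves delicate, I would fall back on Remark \ref{remark:A_plus_definition}-style explicit lower bounds. Finally, when $S_\epsilon\cap B_r=\emptyset$, every $\theta\in B_r$ satisfies $\norm{\Pi^{\intrinsic}_{\hat U}\theta}_2^2\ge\norm{\tilde\theta_*(r)}_2^2$ by definition, so the same intrinsic-branch bound gives the second case of $g$ directly.
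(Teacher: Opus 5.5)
Your analysis of the two component tests matches the paper's: the same mean and variance of the weighted noncentral $\chi^2$ statistic, $\sum_i w_i^2=\intrinsic\Sigma^2$, Chebyshev, and Bonferroni with the $\chi^2$ test. You also correctly isolate the step that the paper's proof merely asserts, namely that $\inf\{\norm{\Pi^{\intrinsic}_{\hat U}\theta}_2^2:\theta\in B_r,\ \norm{\theta}_2\ge\epsilon\}=\tilde\gamma_*^2(\epsilon,r)$ whenever $S_\epsilon\cap B_r\neq\emptyset$.

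Your argument for that step does not work. The claim that $\theta_*(r)$ has intrinsic projection no larger than that of $\theta$ is really the standard-projection fact: $\Pi_{\hat U}\theta\in B_r$ gives $\norm{\theta_*(r)}_2\le\norm{\Pi_{\hat U}\theta}_2$. For the weighted form with unequal $\lambda_i$ the claim is false, because the min-norm point of $B_r$ need not minimize $\norm{\Pi^{\intrinsic}_{\hat U}\cdot}_2$ over $B_r$. The monotonicity of $\tilde\gamma_*(\cdot,r)$ that you need is itself false. So no fallback can rescue this step, and the statement as written fails.

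Here is a counterexample. Take orthonormal $e_1,e_2$ and $\eta\le 1/4$. Let $\hat\theta_1=(e_1+e_2)/\sqrt2$ with $r_1=1-\eta$, and add $N$ predictions $\hat\theta_j=e_1$ with slack radii $r_j=10$. Then:
\begin{itemize}
\item $B_r=B_2^d(\hat\theta_1,1-\eta)$, $\theta_*(r)=\eta\hat\theta_1$, and $S_\eta\cap B_r=\{\theta_*(r)\}$;
\item $\lambda_1^2\in[N+\tfrac12,N+1]$ and $\intrinsic\Sigma^2\in[1,2]$;
\item $\norm{\Pi^{\intrinsic}_{\hat U}\theta}_2^2=(\inner{\hat\theta_1}{\theta}^2+N\inner{e_1}{\theta}^2)/\lambda_1^2$.
\end{itemize}
Hence $\tilde\gamma_*^2(\eta,r)\ge\eta^2/2$. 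But $\theta_0=2\eta e_2$ lies in $B_r$, has norm $2\eta$, and satisfies $\norm{\Pi^{\intrinsic}_{\hat U}\theta_0}_2^2\le 2\eta^2/N$.

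Now fix any $C$ and set $n=\lceil C\sqrt{\intrinsic\Sigma^2}/\tilde\gamma_*^2(\eta,r)\rceil\lesssim C/\eta^2$. Then $\eta$ meets the defining inequality. Every $\epsilon<\eta$ has $S_\epsilon\cap B_r=\emptyset$ and $n\norm{\tilde\theta_*(r)}_2^2\le n\norm{\Pi^{\intrinsic}_{\hat U}\theta_0}_2^2\lesssim C/N$, which falls below $C\sqrt{\intrinsic\Sigma^2}$ for large $N$. So $\epsilon_*(r)=\eta$, and $\theta_0$ belongs to the alternative. At $\theta_0$, however, $n\norm{\Pi^{\intrinsic}_{\hat U}\theta_0}_2^2\lesssim C/N$ and $n\norm{\theta_0}_2^2\lesssim C$. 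For $N\gg C$ and $\sqrt d\gg C$, both component tests of \eqref{eq:bonferroni_test_with_intrinsic_projection} reject at $\theta_0$ with probability close to their levels, so the test is not powerful. The paper's asserted equality fails on the same example.

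What your Chebyshev computation does prove, with no monotonicity lemma, is the statement with $\tilde\gamma_*^2(\epsilon,r)$ replaced by
\[
\bar\gamma^2(\epsilon,r)=\inf\{\norm{\Pi^{\intrinsic}_{\hat U}\theta}_2^2:\theta\in B_r,\ \norm{\theta}_2\ge\epsilon\}=\inf_{\rho\ge\epsilon}\tilde\gamma_*^2(\rho,r).
\]
This quantity is nondecreasing by construction. It also covers the case $S_\epsilon\cap B_r=\emptyset$ with $\epsilon<\norm{\theta_*(r)}_2$.

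Alternatively, your segment argument is valid as written when all nonzero singular values of $\hat U$ coincide, for example with orthogonal predictions, since then $\Pi^{\intrinsic}_{\hat U}=\Pi_{\hat U}$.

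A minor separate point: your claim that every $\rho\ge\epsilon_*(r)$ meets the defining inequality also needs the infimum defining $\epsilon_*(r)$ to be attained. The paper skips this as well.
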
 When the predictions are orthogonal, the intrinsic projection reduces to the standard projection. Consequently, Lemma \ref{lemma:upperbound_adaptive_test_arbitrary_predictions} reduces to Lemma \ref{lemma:upperbound_adaptive_test_orthogonal_predictions}, since when predictions are orthogonal it holds that $\tilde{\gamma}_*^2(\epsilon,r) = \gamma_*^2(\epsilon,r) = \norm{A_+(\epsilon,r)}_2^2$ and $\intrinsic \Sigma^2 = \Rank \hat{U} = k = m$.

Comparing the performance of intrinsic projection in Lemma \ref{lemma:upperbound_adaptive_test_arbitrary_predictions} to that of the standard projection in Lemma \ref{lemma:performance_adaptive_test_with_standard_projection}, the dependence on the dimension of the space is improved since it always holds that $\intrinsic \Sigma^2 \leq \Rank \hat{U} = k$. However, this comes at the cost of a smaller projection since $\norm{\Pi^{\intrinsic}_{\hat{U}}\theta}_2^2 \leq \norm{\Pi_{\hat{U}}\theta}_2^2$ for every $\theta \in \R^d$. 

\subsection{Dependence on the intrinsic dimension}

From an information standpoint, the projection test \eqref{eq:projection_test} discards valuable information about the relative importance of each orthogonal direction within the predictions' span. The intrinsic projection test \eqref{eq:intrinsic_projection_test} does not discard this information. Intuitively, we expect the test based on intrinsic projection to perform better than the test based on standard projection when the predictions are aligned, since their span will have a low intrinsic dimension.

To intuitively understand why we should expect its performance to depend on the intrinsic dimension of $\Sigma^2$, note that the intrinsic projection test \eqref{eq:intrinsic_projection_test} measures the norm of the random vector $(\Lambda_k/\lambda_1)\cdot\tilde{U}_k^TX$ whose mean is $(\Lambda_k/\lambda_1)\cdot\tilde{U}_k^T\theta$ and has covariance matrix $(\Lambda_k^2/\lambda_1^2)/n$. Based on the work of \citet{laurentNonAsymptoticMinimax2012}, we expect to be able to detect a deviation on the intrinsic projection of the data ($
H_0: \theta = 0 \vs H_1: \norm{\Pi^{\intrinsic}_{\hat{U}}\theta}_2 \geq \gamma$) as soon as $n \geq \sqrt{\trace \Lambda_k^4/\lambda_1^4}/\gamma^2$, where $\trace \Lambda_k^4/\lambda_1^4 = \intrinsic \Sigma^2$. The constructions used by \citet{laurentNonAsymptoticMinimax2012} do not directly apply to hypotheses \eqref{eq:union_gaussian_seq_l2_testing} but can be adapted to our problem. First, consider all the separation curves that satisfy the following property.

\begin{property}\label{property:separation_curve_arbitary_predictions} Let $\tilde{\epsilon}$ be a separation curve, and $\tilde{\gamma}(\epsilon,r)=\norm{A(\epsilon,r)}_2/\norm{\hat{U}}_{\op}$. If $\tilde{\epsilon}(r) = \epsilon$ for  $r\in \R_+^m$, then  $\tilde{\epsilon}(r') = \epsilon$ for all $r' \in \R_+^m$ such that $\tilde{\gamma}(\epsilon,r)=\tilde{\gamma}(\epsilon,r')$.
\end{property}

The following lemma provides some evidence for the potential dependence on the intrinsic dimension for adaptive tests. Compared to Lemma \ref{lemma:upperbound_adaptive_test_arbitrary_predictions}, the lower bound on the separation curve provided by Lemma \ref{lemma:lb_union_multiple_gaussian_seq_l2_testing} is loose since it depends on $\tilde{\gamma}$ rather than $\tilde{\gamma}_*$. Nevertheless, it conveys the message that the performance of adaptive tests should depend on the intrinsic dimension of $\Sigma^2$. The proof is deferred to Appendix \ref{appx:lb_union_multiple_gaussian_seq_l2_testing} of the supplementary material.

\begin{lemma}[Dependence of any adaptive test on the intrinsic dimension]\label{lemma:lb_union_multiple_gaussian_seq_l2_testing} For $\Rank \hat{U} \leq d/2$, consider hypotheses \eqref{eq:union_gaussian_seq_l2_testing} with a separation curve $\tilde{\epsilon}$ that satisfies Property \ref{property:separation_curve_arbitary_predictions}. If there exists $r \in \R_+^m$ such that $\tilde{\epsilon}(r) < C \cdot \tilde{\epsilon}_*(r)$ where \begin{equation}\label{eq:epsilon_star_arbitary_predictions_lowerbound}
\tilde{\epsilon}_*(r)= \inf\left\{\epsilon\geq 0 : n \geq C' \cdot \left(\frac{\sqrt{\intrinsic \Sigma^2}}{\tilde{\gamma}^{2}(\epsilon,r)} \wedge \frac{\sqrt{d}}{\epsilon^{2}}\right) \textand \epsilon \geq \sqrt{\frac{m}{\intrinsic \Sigma^2}}\cdot \tilde{\gamma}(\epsilon,r)\right\}, \end{equation} then no test can be valid and powerful.
\end{lemma}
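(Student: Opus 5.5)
We plan to follow the template of Lemma \ref{lemma:lowerbound_adaptive_test_orthogonal_predictions}: use Property \ref{property:separation_curve_arbitary_predictions} to turn one bad accuracy vector into a large family of accuracy vectors, and then place a random alternative on that family. Fix $r$ with $\tilde{\epsilon}(r)=\epsilon<C\cdot\tilde{\epsilon}_*(r)$. Work in the coordinates $\hat{U}=\tilde{U}_k\Lambda_k V^T$. Write $w=\hat U^T\theta=V\Lambda_k\tilde U_k^T\theta$, so that the vector of projections satisfies $A(\epsilon,r)=w$ up to the geometry of Remark \ref{remark:A_plus_definition}, and $\norm{w}_2\le\lambda_1\norm{\Pi_{\hat U}\theta}_2$.

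Any vector $a$ with $\norm{a}_2=\norm{A(\epsilon,r)}_2$ can be realised as $A(\epsilon,r')$ for some $r'$. This follows from $r_i'^2=\epsilon^2+h_i^2-2h_ia_i$, with $h_i$ fixed, whenever those $r'_i$ are nonnegative. By Property \ref{property:separation_curve_arbitary_predictions}, every such $r'$ has $\tilde{\epsilon}(r')=\epsilon$, because $\tilde\gamma$ depends on $r'$ only through $\norm{A}_2$. Hence every $\theta$ with $\norm{\theta}_2=\epsilon$ and $\norm{\hat U^T\theta}_2=\norm{A}_2$ lies in the alternative $\mathcal{P}(\tilde\epsilon)$. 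I must check that such $\theta$ also satisfy $\norm{\theta-\hat\theta_i}_2\le r'_i$. Since $\norm{\theta-\hat\theta_i}_2^2=\epsilon^2+h_i^2-2h_i\inner{\theta}{\hat u_i}$, the constraint holds with equality when $r'_i$ is defined from $a=\hat U^T\theta$. Nonnegativity of $r'$ needs $\epsilon^2+h_i^2\ge 2h_ia_i$, which holds automatically since $a_i\le\epsilon$.

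The random alternative is modelled on \citet{laurentNonAsymptoticMinimax2012}. Set $\theta=\tilde U_k\,\Lambda_k\,\xi\cdot\tau+\eta$, where $\xi$ is a vector of independent Rademacher signs. The scale $\tau$ is chosen with $\tau^2\sum_i\lambda_i^4\asymp\tilde\gamma^2\lambda_1^2$, and normalisation is enforced exactly via $\norm{\hat U^T\theta}_2^2=\tau^2\sum_i\lambda_i^6$ (signs do not affect norms). The weight pattern $\lambda_i^2$ in the $\tilde u_i$ coordinate is the prior that is least favourable for a heteroscedastic signal with effective trace $\sum\lambda_i^4/\lambda_1^4$. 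The remainder $\eta$ is drawn uniformly on a sphere in the orthogonal complement of the span, which has dimension at least $d/2$ because $\Rank\hat U\le d/2$. Its radius is set so that $\norm{\theta}_2=\epsilon$. This requires $\norm{\Pi_{\hat U}\theta}_2\le\epsilon$, and that is precisely what the side condition $\epsilon\ge\sqrt{m/\intrinsic\Sigma^2}\,\tilde\gamma$ is designed to ensure, after bounding $\norm{\Pi_{\hat U}\theta}_2^2=\tau^2\sum\lambda_i^2$ by Cauchy–Schwarz using $\sum_i\lambda_i^2=\trace\Sigma=m$.

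Then bound the $\chi^2$ divergence between the mixture and $P_0$. The span part and the complement part are independent, so the divergence factorises as $E\exp(n\inner{\theta}{\theta'})=E\exp(n\tau^2\sum_i\lambda_i^2\xi_i\xi'_i)\cdot E\exp(n\inner{\eta}{\eta'})$. The first factor is $\prod_i\cosh(n\tau^2\lambda_i^2)\le\exp\left(n^2\tau^4\sum_i\lambda_i^4/2\right)$. This stays bounded exactly when $n\lesssim\sqrt{\intrinsic\Sigma^2}/\tilde\gamma^2$, after substituting $\tau$ and tracking how the factor $\lambda_1$ enters through $\norm{\hat U}_{\op}$. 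The second factor is the classical uniform-sphere bound, which is bounded when $n\lesssim\sqrt{d}/\epsilon^2$. Taking the minimum gives both regimes.

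The main obstacle is the normalisation bookkeeping in the previous two paragraphs. The sign vector must produce a $\hat U^T\theta$ of exactly the prescribed norm, so that Property \ref{property:separation_curve_arbitary_predictions} places every draw in the alternative. At the same time the $\chi^2$ computation must still involve $\sum\lambda_i^4$ rather than $\sum\lambda_i^6$ or $k$. If the weights $\lambda_i^2$ fail to reconcile these two requirements, my fallback is to choose the weights to be $\lambda_i$ and reparametrise $\tau$. Once this construction is fixed, the rest is routine: Le Cam's two-point argument gives that any valid test has type-II error above $\beta$.
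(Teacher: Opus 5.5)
This is the paper's construction. Take $\gamma=\norm{A(\epsilon,r)}_2$, $\tau=\gamma/\norm{\lambda}_4^2$ and $\xi=w$. Then your span component $\tau\,\tilde U_k\Lambda_k\xi$ is exactly the paper's $\gamma\,\tilde U_k\Lambda_k^{-1}y_w$ with $y_w=\Lambda_k^2w/\norm{\lambda}_4^2$. The remaining steps also match: the reassignment $r_i'^2=\epsilon^2+h_i^2-2h_i\inner{\theta}{\hat u_i}$ combined with Property~\ref{property:separation_curve_arbitary_predictions}, the uniform sphere on the complement, and the $\prod_i\cosh$ bound. Your nonnegativity check via $a_i\le\epsilon$ is slightly cleaner than the paper's.

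The ``main obstacle'' you describe is an arithmetic slip. You have $\hat U^T\theta=\tau\,\tilde V_k\Lambda_k^2\xi$, so $\norm{\hat U^T\theta}_2^2=\tau^2\sum_i\lambda_i^4$, not $\tau^2\sum_i\lambda_i^6$. With your own choice of $\tau$ this equals $\gamma^2$ for every sign pattern. The first $\chi^2$ factor is then at most $\exp\bigl(n^2\gamma^4/(2\norm{\lambda}_4^4)\bigr)$, which gives $n\lesssim\norm{\lambda}_4^2/\gamma^2=\sqrt{\intrinsic\Sigma^2}/\tilde\gamma^2$. No fallback is needed.

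The weight your formula puts on $\tilde u_i$ is $\lambda_i$, not $\lambda_i^2$ as you wrote. Putting $\lambda_i^2$ there would only give the weaker threshold $\sum_i\lambda_i^6/(\gamma^2(\sum_i\lambda_i^8)^{1/2})$. Likewise, the norm check needs no Cauchy--Schwarz: $\norm{\Pi_{\hat U}\theta}_2^2=\tau^2\trace\Sigma=m\gamma^2/\norm{\lambda}_4^4=(m/\intrinsic\Sigma^2)\,\tilde\gamma^2/\lambda_1^2$. The stated side condition suffices because $\lambda_1^2=\norm{\Sigma}_{\op}\ge\Sigma_{11}=1$.

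One step is missing, in your write-up and in the paper's alike. From $\tilde\epsilon(r)<C\,\tilde\epsilon_*(r)$ you only learn that $\epsilon=\tilde\epsilon(r)$ violates at least one of the two conditions defining the set in \eqref{eq:epsilon_star_arbitary_predictions_lowerbound}. You do not learn that $n<C'(\cdots)$ \emph{and} that $\epsilon\ge\sqrt{m/\intrinsic\Sigma^2}\,\tilde\gamma(\epsilon,r)$ both hold; the paper's contrapositive makes the same slip. If the side condition fails, your prior does not exist.

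In that case the conclusion can actually be false. Take $m=1$ and $r_1<h_1$; the side condition forces $\tilde\epsilon_*(r)\ge h_1-r_1$. The constant curve $\tilde\epsilon\equiv\epsilon_0$ with $\epsilon_0<C(h_1-r_1)$ satisfies Property~\ref{property:separation_curve_arbitary_predictions}. Yet it is detected by the chi-squared test once $n\gtrsim\sqrt d/\epsilon_0^2$.

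So your argument proves the lemma only when the side condition holds at $\tilde\epsilon(r)$. This is the analogue of $S_\epsilon\cap B_r\neq\emptyset$ in the orthogonal case, and you should state that restriction explicitly.
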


\section{Simulations}\label{sec:simulations_blackbox}

In this section, we compare tests that differ in how they use the available predictions. The chi-squared test \eqref{eq:chi_squared_test} ignores predictions, whereas the projection test \eqref{eq:projection_test} and the intrinsic projection test \eqref{eq:intrinsic_projection_test} always use the predictions regardless of their quality. We also consider two adaptive tests based on standard projection \eqref{eq:bonferroni_test_with_standard_projection} and intrinsic projection \eqref{eq:bonferroni_test_with_intrinsic_projection}.

A trial consists of sampling a set of $m$ predictions $\{\hat{\theta}_1,\dots,\hat{\theta}_m\}$ that span an $m$-dimensional subspace and whose pairwise cosine similarities are all equal to $\rho\in[0,1)$, obtaining an observation $X$ from an adversarial distribution under the alternative hypothesis, providing the observation and the predictions to a test, and recording whether the test rejects the null hypothesis.

Henceforth, let $n\in\mathbb{N}$ be the sample size, $d\in \mathbb{N}$ be the ambient dimension of the data, $m\in [d-1]$ be the number of predictions, $\epsilon\in\R_+$ be the required separation, $r=\bar{r}\cdot\mathbb{1}_m\in\R_+^m$ be the vector of accuracies, so that all predictions have the same accuracy, and $h=\bar{h}\cdot\mathbb{1}_m\in\R_+^m$ be the vector of norms, so that all predictions have the same norm. We assume that $r$ has been chosen so that $S_\epsilon\cap B_r\neq\emptyset$ is satisfied.

We first sample $m$ predicted directions that span an $m$-dimensional subspace and whose pairwise cosine similarities are all equal to $\rho$. Let $\Sigma_\rho=(1-\rho)\cdot I_m+\rho\cdot\mathbb{1}_m\mathbb{1}_m^T$ be the target cosine-similarity matrix, and let $\Sigma_\rho=LL^T$ be its Cholesky decomposition. We sample $G\in\R^{d\times m}$ with $G_{i,j}\iid\Normal(0,1)$, let $G=QR$ be its reduced QR decomposition, and set $\hat{U}=\begin{bmatrix}\hat{u}_1,\dots,\hat{u}_m\end{bmatrix}=QL^T$. Since $Q^TQ=I_m$, it follows that $\hat{U}^T\hat{U}=\Sigma_\rho$. Therefore, $\norm{\hat{u}_i}_2=1$ for every $i\in[m]$ and $\Corr(\hat{u}_i,\hat{u}_j)=\rho$ for every distinct $i,j\in[m]$. Thus, the $m$ predicted directions $\{\hat{u}_1,\dots,\hat{u}_m\}$ are random vectors on the unit sphere with pairwise cosine similarity $\rho$. Furthermore, we let $\hat{\theta}_i=\bar{h}\cdot \hat{u}_i$ for $i \in [m]$.

Second, we sample an adversarially chosen $\theta$ based on the construction used to prove Lemma \ref{lemma:lowerbound_oracle_test_arbitrary_predictions}:
\begin{equation}\label{eq:simulated_theta}
\theta=\tilde{U}_m A_*(\epsilon,r)+\tilde{U}_m^\perp z\cdot\sqrt{\epsilon^2-\norm{A_*(\epsilon,r)}_2^2}.
\end{equation}
Here, the columns of $\tilde{U}_m$ form an orthonormal basis for the span of $\{\hat{u}_1,\dots,\hat{u}_m\}$, the columns of $\tilde{U}_m^\perp$ form an orthonormal basis for its orthogonal complement, and $z$ is sampled from $\text{Uniform}(S_2^{d-m-1})$. Since $m<d$ and $S_\epsilon\cap B_r\neq\emptyset$, $A_*(\epsilon,r)$ is the unique $m$-dimensional vector satisfying $\norm{A_*(\epsilon,r)}_2^2=\inf_{\theta\in B_r\cap S_\epsilon}\norm{\Pi_{\hat{U}}\theta}_2^2$. Because the predictions $\{\hat{\theta}_1,\dots,\hat{\theta}_m\}$ have the same accuracy, norm, and pairwise cosine similarity, it holds that $\norm{A_*(\epsilon,r)}_2=\gamma_*(\epsilon,r)=\tilde{\gamma}_*(\epsilon,r)$. Thus, every $\theta$ sampled from \eqref{eq:simulated_theta} is adversarial to tests that rely on standard or intrinsic projection. Finally, we sample $X|\theta\sim\Normal(\theta, I_d/n)$ and provide the observation $X$ and the predictions to the test under evaluation.

Throughout the simulations, we set $\alpha=0.05$, $n=100$, and $\epsilon=\bar{h}=0.3$, and vary $\bar{r}$ from $r_{\min}$ to $r_{\max}$. We choose $r_{\min}$ so that the assumption $S_\epsilon\cap B_r\neq\emptyset$ holds; that is, $r_{\min}=\min\{\tilde{r}\in\R_+:S_\epsilon\cap B_{\tilde{r}\cdot\mathbb{1}_m}\neq\emptyset\}=\epsilon\cdot[2-2\sqrt{\left(1+(m-1)\rho\right)/m}]^{1/2}$. Furthermore, we choose $r_{\max}=\epsilon\cdot\sqrt{2}$ so that $\norm{A_*(\epsilon,r_{\max}\cdot \mathbb{1}_m)}_2=0$, which implies that any projection onto the span of the predictions contains no useful information. For every configuration and test, we estimate power using $1{,}000$ independent trials.

% Therefore, $A_+(\epsilon,r)=A(\epsilon,r)=\mathbb{1}_m\cdot(2\epsilon^2-\bar{r}^2)/(2\epsilon)\geq0$.

\noindent\textbf{Consider the case of a single prediction.} We set $m=1$ and consider ambient dimensions $d\in\{2,10,100\}$. When $m=1$, the preceding construction samples one predicted direction $\hat{u}_1$ uniformly from the unit sphere. Figure \ref{fig:single_prediction} shows the results. Note that the chi-squared test suffers from the curse of dimensionality, whereas the projection test \eqref{eq:projection_test} does not, provided that the prediction is of high quality. However, for a low-quality prediction, the projection test performs worse than the chi-squared test. This is expected, as the projection test alone is not adaptive. A good trade-off between the projection and chi-squared tests is obtained by the adaptive test \eqref{eq:bonferroni_test_with_standard_projection}, which closely tracks the power of the projection test when the prediction is good but remains close to the power of the chi-squared test when the prediction is uninformative. This simulation illustrates that the adaptive test \eqref{eq:bonferroni_test_with_standard_projection} achieves a rate-wise free lunch, as described in Section \ref{sec:overview_blackbox}.

\begin{figure}[!b]
\centering
\includegraphics[width=\linewidth]{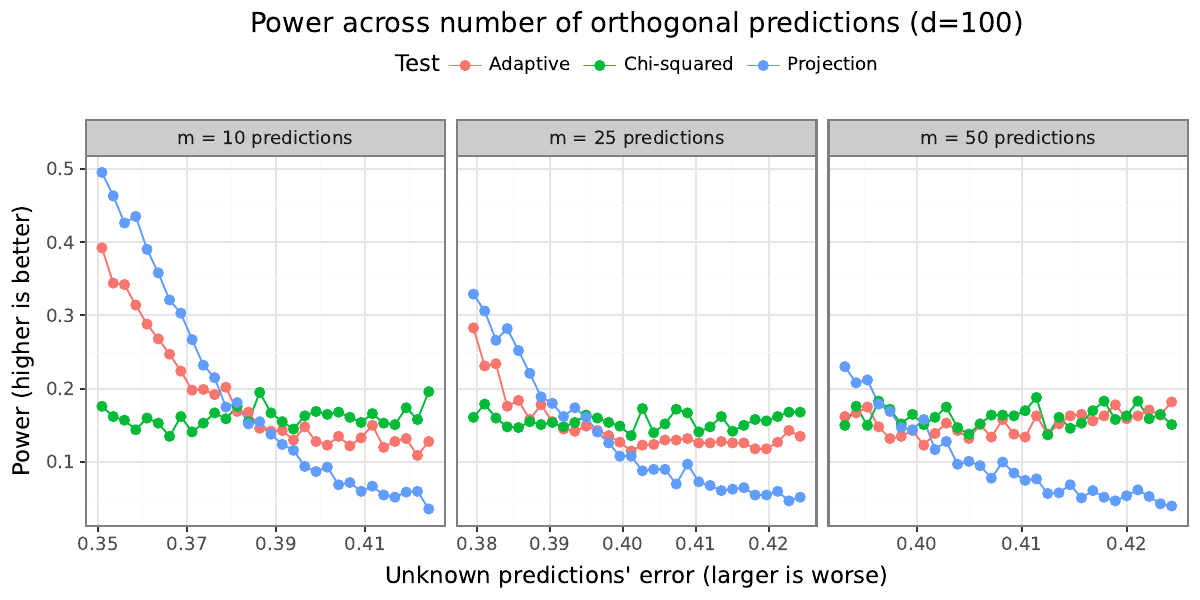}
\caption{Comparison between the chi-squared test \eqref{eq:chi_squared_test}, which ignores predictions, the projection test \eqref{eq:projection_test}, which always uses all predictions, and the adaptive test \eqref{eq:bonferroni_test_with_standard_projection}, which optimally adapts to the predictions' errors.}
\label{fig:orthogonal_predictions}
\end{figure}

\noindent\textbf{For orthogonal predictions}, we set $d=100$ and sample $\{\hat{u}_1,\dots,\hat{u}_m\}$ for $m\in\{10,25,50\}$ from the unit sphere such that they are orthogonal $(\rho=0)$. Figure \ref{fig:orthogonal_predictions} shows the results. Note that we progressively increase the number of orthogonal predicted directions until the number of predictions equals half of the ambient dimension of the data, since Lemma \ref{lemma:upperbound_adaptive_test_orthogonal_predictions} indicates that the adaptive test offers no rate-wise advantage over the chi-squared test at that point. Since the predictions are orthogonal, they represent the worst case from a geometric perspective, as this forces both the projection test \eqref{eq:projection_test} and the adaptive test \eqref{eq:bonferroni_test_with_standard_projection} to look for deviations from the null hypothesis in all predicted directions. Nevertheless, when the predictions are of high quality, prediction-based tests can still outperform the chi-squared test that ignores them.

\noindent\textbf{For aligned predictions}, we set $d=100$ and sample $\{\hat{u}_1,\dots,\hat{u}_m\}$ for $m=50$ from the unit sphere such that they span an $m$-dimensional subspace and all pairwise cosine similarities are equal to $\rho$. In this setting, it holds that \begin{equation}
\intrinsic \Sigma_\rho^2 = 1 + (m-1)\cdot \left(\frac{1-\rho}{1+(m-1)\rho}\right)^2.
\end{equation} Thus, by controlling their cosine similarity, we can control the intrinsic dimension of $\Sigma^2_\rho$. For instance, if predictions are orthogonal ($\rho=0$), it follows that $\intrinsic \Sigma^2_\rho=m$, while if predictions are almost parallel ($\rho\approx 1$), we have $\intrinsic \Sigma^2_\rho \approx 1$. For this experiment, we choose values for $\rho$ so that  $\intrinsic \Sigma^2_\rho$ is approximately $1$, $m/5$, or $m$. Figure \ref{fig:correlated_predictions} shows the results. First, note that when the predictions are orthogonal, methods based on intrinsic and standard projection perform equally. However, when there is high cosine similarity among the predictions, tests based on intrinsic projection, such as \eqref{eq:intrinsic_projection_test} and \eqref{eq:bonferroni_test_with_intrinsic_projection}, benefit from this alignment and outperform those based on standard projection. Furthermore, the adaptive intrinsic-projection test \eqref{eq:bonferroni_test_with_intrinsic_projection} appropriately relies on or ignores the predictions, providing a good trade-off between the intrinsic projection test \eqref{eq:intrinsic_projection_test} and the chi-squared test \eqref{eq:chi_squared_test}.

\section{Discussion}\label{sec:discussion_blackbox}

In this work, we have sought to provide answers to the question of optimal data aggregation with predictions, such as those produced by machine learning models. Although our results concern the Gaussian sequence model, the fundamental ideas are geometric in nature and therefore provide a way to generalize to other models. That is, insofar as a space is equipped with an inner product, both standard and intrinsic projections can be realized. 

At a high level, it is worth recalling that the problems of estimating and testing with access to black-box predictions are substantially different. As expected, when testing with black-box predictions, we are achieving faster rates than comparable estimation problems with similar guarantees. More importantly, testing requires only information about the \textit{direction} of a prediction. As a result, a prediction may remain useful for testing even when it lies far from the target, provided that it aligns well with the target of inference. Thus, decision-making can rely on predictions with weaker guarantees than those needed for estimation.

This observation suggests a practical criterion for training predictive models when their outputs feed into downstream decisions. In that setting, high cosine similarity with the target should matter more than distance to the target itself. This shift in evaluation could also yield computational gains, since in many modern machine learning pipelines, the cost of generating a prediction often increases with its quality. Therefore, decision procedures that use weaker yet well-aligned predictions may reduce computational cost without sacrificing operational performance.

Finally, other works have approached the problem of decision-making with black-box predictions by constructing confidence sets and then making decisions based on them. Although equivalent to the testing framework explored in this work, due to the duality between confidence sets and testing procedures, the width of said confidence sets is not fully indicative of the performance of decision-making based on them. This is because their width is fundamentally dominated by the rates of functional estimation, which typically differ from those of testing. 

Confidence sets remain highly useful for human-in-the-loop decision-making because they communicate information about the uncertainty involved in the decision-making process. However, as we move into automatic decision-making applications, the testing perspective becomes increasingly relevant, as it is solely concerned with the operational performance of large volumes of decisions without the need to interpret any single decision. Thus, we expect that the testing perspective will be a worthwhile framework for advancing research on large-scale automated decision-making with black-box predictions. 

\noindent\textbf{Acknowledgments} The authors thank Alexandra Carpentier, Ryan Tibshirani, Arun Kumar Kuchibhotla, Mikael Kuusela, Chad Schafer, Tudor Manole, and Richard Samworth for useful comments and discussions during the development of this work.

% \pagebreak

\begin{center}

{\large\bf SUPPLEMENTARY MATERIAL}

\end{center}

\appendix
\addcontentsline{toc}{section}{Supplementary material}
\etocdepthtag.toc{mtappendix}
\etocsettagdepth{mtchapter}{none}
\etocsettagdepth{mtappendix}{subsubsection}
\etocsettagdepth{mtreferences}{section}
\renewcommand{\contentsname}{Table of contents}
{\hypersetup{linkcolor=black}
\tableofcontents}

\renewcommand{\thesection}{\Alph{section}}
\numberwithin{equation}{section}
\numberwithin{theorem}{section}
\numberwithin{corollary}{section}
\numberwithin{proposition}{section}
\numberwithin{lemma}{section}

% Unique internal hyperlink destinations
\renewcommand{\theHsection}{supp.\Alph{section}}
\renewcommand{\theHequation}{supp.\Alph{section}.\arabic{equation}}

\pagebreak

\section{General upper bound on the performance of tests}

The following proposition provides a means to determine when a test controls both type-I and type-II errors uniformly over the null and alternative hypotheses, respectively. In particular, \eqref{eq:moment_condition} is good enough to capture, up to constants that depend on type-I and type-II errors, the minimum distance between the hypotheses required for a test to distinguish them.

\begin{proposition}\label{prop:chebyshev_bound} Let $\mathcal{P}_0$ and $\mathcal{P}_1(\epsilon)$ be sets of distributions, and given data $X\sim P$ consider the hypotheses \begin{equation}
H_0: P \in \mathcal{P}_0 \vs H_1: P \in \mathcal{P}_1(\epsilon).
\end{equation} Let $T$ be a test statistic, and  $q_{1-\alpha}(P,T)$ denote the $(1-\alpha)$-quantile of the test statistic under the distribution $P$ \begin{equation}
q_{1-\alpha}(P,T) = \inf\left\{u : P(T > u)\leq \alpha \right\}.
\end{equation} Define the test \begin{equation}
\psi(X) = I(T(X) > t) \where t = \sup_{P \in \mathcal{P}_0} q_{1-\alpha}(P,T),
\end{equation} then the test controls the type-I error uniformly over the null hypothesis: \begin{equation}
\sup_{P\in \mathcal{P}_0} P(\psi(X) = 1)\leq \alpha \period
\end{equation} Furthermore, if it holds that \begin{equation}\label{eq:power_condition_for_test_statistic}
t \leq \inf_{P \in \mathcal{P}_1} E_{P}[T(X)] - \sqrt{\frac{V_{P}[T(X)]}{\beta}},
\end{equation} the test controls the type-II error uniformly over the alternative hypothesis \begin{equation}
\sup_{P\in \mathcal{P}_1} P(\psi(X) = 0)\leq \beta \period
\end{equation} Consequently, the critical separation $\epsilon_*$ is upper bounded by $\epsilon$: \begin{equation}
\epsilon_* = \inf\{\epsilon \geq 0 : \inf_{\psi \in \Psi_{\alpha}\left(\mathcal{P}_0\right)}\sup_{P \in \mathcal{P}_1(\epsilon)}P(\psi(X)=0) \leq \beta\} \leq \epsilon.
\end{equation} Finally, since $q_{1-\alpha}(P,T)\leq E_P[T(X)] +  \sqrt{V_P[T(X)]/\alpha}$, it follows that \eqref{eq:power_condition_for_test_statistic} is implied by \begin{equation}\label{eq:moment_condition}
\sup_{P \in \mathcal{P}_0}E_{P}[T(X)] + \sqrt{\frac{V_{P}[T(X)]}{\alpha}} \leq \inf_{P \in \mathcal{P}_1}  E_{P}[T(X)] - \sqrt{\frac{V_{P}[T(X)]}{\beta}}\period
\end{equation}
\end{proposition}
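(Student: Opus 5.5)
The proof splits into three steps: type-I error control, type-II error control via Chebyshev, and the final moment-based sufficient condition.

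\emph{Type-I error.} Fix $P\in\mathcal{P}_0$. By definition $t\geq q_{1-\alpha}(P,T)$, so $P(T>t)\leq P(T>q_{1-\alpha}(P,T))$. It then suffices to show $P(T>q_{1-\alpha}(P,T))\leq\alpha$. This follows from the right-continuity of $u\mapsto P(T>u)$: take $u_j\downarrow q_{1-\alpha}(P,T)$ with $P(T>u_j)\leq\alpha$; the events $\{T>u_j\}$ increase to $\{T>q_{1-\alpha}\}$, so continuity from below gives the bound. Taking the supremum over $\mathcal{P}_0$ yields uniform type-I control.

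\emph{Type-II error.} Fix $P\in\mathcal{P}_1$. Assume condition \eqref{eq:power_condition_for_test_statistic}, and write $\mu=E_P[T]$ and $\sigma^2=V_P[T]$, so that $t\leq\mu-\sigma/\sqrt{\beta}$. Then $\{\psi=0\}=\{T\leq t\}\subseteq\{T-\mu\leq-\sigma/\sqrt{\beta}\}\subseteq\{|T-\mu|\geq\sigma/\sqrt\beta\}$. Chebyshev's inequality bounds the probability of the last event by $\beta$. The case $\sigma=0$ needs a separate look: $T=\mu$ almost surely, and the event $\{T\leq t\}$ with $t\leq\mu$ may still occur. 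This boundary issue is the one delicate point; I would handle it by noting that the bound is claimed as $\leq\beta$, and using the one-sided Cantelli inequality gives strict room, or by treating $T\leq t$ versus the strict version as immaterial up to constants. Taking the supremum over $\mathcal{P}_1$ gives uniform type-II control. Hence $\psi\in\Psi_\alpha(\mathcal{P}_0)$ has worst-case type-II error at most $\beta$ at separation $\epsilon$, and the claimed upper bound $\epsilon_*\leq\epsilon$ is immediate from the definition of the infimum.

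\emph{Moment condition.} For any $P$, Chebyshev gives $P(T>\mu+\sigma/\sqrt{\alpha})\leq P(|T-\mu|\geq\sigma/\sqrt\alpha)\leq\alpha$. Therefore $q_{1-\alpha}(P,T)\leq\mu+\sigma/\sqrt\alpha$, since this value belongs to the set whose infimum defines the quantile. Taking the supremum over $\mathcal{P}_0$ yields $t\leq\sup_{P\in\mathcal{P}_0}E_P[T]+\sqrt{V_P[T]/\alpha}$, understood as the supremum of the sum. Combined with \eqref{eq:moment_condition}, this gives \eqref{eq:power_condition_for_test_statistic}, which completes the argument. The only step beyond routine work is the quantile right-continuity argument in the type-I part.
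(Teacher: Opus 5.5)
Your three steps (right-continuity of $u\mapsto P(T>u)$ for type-I control, Chebyshev for type-II control, Chebyshev again for the quantile bound) form the natural argument. The paper states this proposition without proof, but it clearly has the same Chebyshev reasoning in mind. For $V_P[T(X)]>0$ everything you write is correct.

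The gap is your treatment of the degenerate case $\sigma=0$. Neither of your proposed fixes works, because the claim is false there. If $V_P[T]=0$ and $t=E_P[T]$, condition \eqref{eq:power_condition_for_test_statistic} holds with equality, yet $P(\psi(X)=0)=P(T\le t)=1$. Concretely:
\begin{itemize}
\item take $\mathcal{P}_0=\{\mathcal{N}(0,1)\}$ and $T(X)=X$, so that $t=z_{1-\alpha}$;
\item let $\mathcal{P}_1$ contain the point mass at $z_{1-\alpha}$;
\item then the test never rejects under that alternative.
\end{itemize}
Cantelli's bound $\sigma^2/(\sigma^2+a^2)$ gives strict room, namely $\beta/(1+\beta)$, only when $\sigma>0$; at $\sigma=a=0$ it is vacuous. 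Nor can you appeal to ``immaterial up to constants'': the proposition is an exact, constant-free statement. The right move is to state the missing hypothesis explicitly. Either assume $V_P[T(X)]>0$ for all $P\in\mathcal{P}_1$, or require the strict inequality $t<E_P[T]$ whenever $V_P[T]=0$, in which case $P(T\le t)=0$ trivially. This is harmless for the paper, since every statistic it feeds into the proposition (the unit-variance $T_r$ and the weighted noncentral chi-squared statistics) has strictly positive variance.

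A smaller instance of the same issue appears in your last step. When $\sigma=0$, the chain $P(T>\mu+\sigma/\sqrt{\alpha})\le P(|T-\mu|\ge\sigma/\sqrt{\alpha})\le\alpha$ breaks, because the middle probability equals $1$. The conclusion $q_{1-\alpha}(P,T)\le\mu+\sigma/\sqrt{\alpha}$ is still true, since $P(T>\mu)=0$. To fix the argument, either use the strict event $\{|T-\mu|>\sigma/\sqrt{\alpha}\}$, which Chebyshev controls for $\sigma>0$ and which is null for $\sigma=0$, or treat $\sigma=0$ separately.
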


\subsection{Bonferroni correction}\label{appx:Bonferroni}

\begin{restatable}{proposition}{Bonferroni}\label{prop:Bonferroni} Let $\mathcal{P}_0$ and $\mathcal{P}_1(\epsilon)$ be sets of distributions, and given data $X\sim P$ consider the hypotheses \begin{equation}
H_0: P \in \mathcal{P}_0 \vs H_1: P \in \mathcal{P}_1(\epsilon).
\end{equation} Let $\psi^{\alpha}_i$ for $i \in \{1,2\}$ be tests such that they control the type-I error uniformly by $\alpha$. Additionally, they control the type-II error uniformly by $\beta$ whenever $\epsilon \geq \epsilon_i$ respectively. Then, the test \begin{equation}
\psi^{\alpha}(X) = \psi^{\alpha \cdot C}_1(X) \lor \psi^{\alpha\cdot (1-C)}_2(X) \quad \for C \in (0,1)
\end{equation} controls the type-I error uniformly by $\alpha$. Furthermore, it controls the type-II error uniformly by $\beta$ whenever $\epsilon \geq \epsilon_1 \wedge \epsilon_2$.
\end{restatable}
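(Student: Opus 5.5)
The statement to prove is the Bonferroni proposition (Proposition \ref{prop:Bonferroni}). The plan is to verify type-I control by a union bound and type-II control by monotonicity of rejection regions.

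\textbf{Type-I error.} Fix $P\in\mathcal{P}_0$. The combined test rejects only if at least one component rejects, so
\begin{equation}
P(\psi^{\alpha}(X)=1)\leq P(\psi_1^{\alpha C}(X)=1)+P(\psi_2^{\alpha(1-C)}(X)=1).
\end{equation}
Each $\psi_i^{\alpha'}$ controls the type-I error uniformly at its own nominal level $\alpha'$. The right-hand side is therefore at most $\alpha C+\alpha(1-C)=\alpha$, and taking the supremum over $\mathcal{P}_0$ gives uniform validity.

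\textbf{Type-II error.} Suppose $\epsilon\geq\epsilon_1\wedge\epsilon_2$, and without loss of generality $\epsilon\geq\epsilon_1$. For any $P\in\mathcal{P}_1(\epsilon)$,
\begin{equation}
P(\psi^{\alpha}(X)=0)\leq P(\psi_1^{\alpha C}(X)=0),
\end{equation}
because the combined test fails to reject only when both components fail. It then suffices that $\psi_1^{\alpha C}$ has uniform type-II error at most $\beta$ on $\mathcal{P}_1(\epsilon)$.

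\textbf{Main obstacle.} The hypothesis states that $\psi_1$ has power $\beta$ for $\epsilon\geq\epsilon_1$, but it is phrased at level $\alpha$, whereas we use level $\alpha C$. I would read the hypothesis as holding for the family $\psi_i^{\alpha'}$ at every level $\alpha'\in(0,1)$, with thresholds $\epsilon_i=\epsilon_i(\alpha',\beta)$.

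In all the paper's applications, Proposition \ref{prop:chebyshev_bound} supplies such thresholds. Replacing $\alpha$ by $\alpha C$ only changes the factor $\sqrt{1/\alpha}$ to $\sqrt{1/(\alpha C)}$ in the moment condition \eqref{eq:moment_condition}, so $\epsilon_i$ changes by a constant depending only on $\alpha,\beta,C$. This is absorbed into the constants $C$ of the lemmas.

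I would state this interpretation explicitly, namely that $\epsilon_i$ denotes the separation at which $\psi_i$ at its allotted level is powerful. The statement itself is the union bound above.

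We also need the alternative $\mathcal{P}_1(\epsilon)$ to shrink as $\epsilon$ increases, which holds for the norm-separated alternatives used throughout. This guarantees that power at $\epsilon_1$ persists for all $\epsilon\geq\epsilon_1$.
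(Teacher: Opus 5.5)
Your proposal is correct and matches the paper's proof. The paper bounds the type-I error by inclusion--exclusion, dropping the nonnegative term $E_P[\psi_1^{\alpha C}\wedge\psi_2^{\alpha(1-C)}]$, which is your union bound, and it bounds the type-II error via $E_P[\psi^{\alpha}]\geq E_P[\psi_1^{\alpha C}]\lor E_P[\psi_2^{\alpha(1-C)}]$. Your reading of $\epsilon_i$ as the threshold at the allotted level $\alpha C$ or $\alpha(1-C)$ is a fair clarification that the paper leaves implicit. The extra monotonicity assumption on $\mathcal{P}_1(\epsilon)$ is unnecessary, because the hypothesis already grants power for every $\epsilon\geq\epsilon_i$.
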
 \begin{proof}
Under the null hypothesis, we have that \begin{align}
\sup_{P \in \mathcal{P}_0}E_P\left[\psi^{\alpha}\right] &= \sup_{P \in \mathcal{P}_0}E_P\left[\psi^{\alpha \cdot C}_1 \lor \psi^{\alpha\cdot (1-C)}_2\right]\\
&=\sup_{P \in \mathcal{P}_0} E_P\left[\psi^{\alpha \cdot C}_1 \right] + E_P\left[\psi^{\alpha\cdot (1-C)}_2\right] - E_P\left[\psi^{\alpha \cdot C}_1 \wedge \psi^{\alpha\cdot (1-C)}_2\right]\\
&\leq \alpha
\end{align} Under the alternative hypothesis, it follows that \begin{equation}
\inf_{P \in \mathcal{P}_1}E_P\left[\psi^{\alpha}\right] \geq \inf_{P \in \mathcal{P}_1(\epsilon)} \left(E_P\left[\psi^{\alpha \cdot C}_1 \right] \lor E_P\left[\psi^{\alpha\cdot (1-C)}_2\right]\right) \geq 1-\beta
\end{equation} whenever $
\epsilon \geq \epsilon_1 \wedge \epsilon_2.
$
\end{proof}

\section{General lower bound on the performance of tests}

\begin{proposition}[\citet{ingsterNonparametricGoodnessofFitTesting2003},\citet{tsybakovIntroductionNonparametricEstimation2009}]\label{prop:two_fuzzy_hypotheses} Let $P_0$ be a distribution and $\mathcal{P}_1(\epsilon)$ be a set of distributions, and given data $X\sim P$ consider the hypotheses \begin{equation}
H_0: P = P_0 \vs H_1: P \in \mathcal{P}_1(\epsilon).
\end{equation} Let $\pi$ be a distribution supported on $\mathcal{P}_1(\epsilon)$. It follows that minimax risk \eqref{eq:minimax_risk_definition} is lower bounded \begin{equation}
R_*(\mathcal{P}_1(\epsilon)) > \beta  \text{ if }\chi^2\left(E_{P \sim \pi}P,P_0\right)\leq C_{\alpha,\beta}
\end{equation} where $C_{\alpha,\beta}$ is a positive constant that depends only on $\alpha$ and $\beta$. 
\end{proposition}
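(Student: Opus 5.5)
The plan is the standard two-point (fuzzy hypotheses) reduction via total variation and Cauchy--Schwarz. Write $\bar P = E_{P\sim\pi}P$ for the mixture. Because $\pi$ is supported on $\mathcal{P}_1(\epsilon)$, every test $\psi\in\Psi_\alpha$ satisfies
\begin{equation}
\sup_{P\in\mathcal{P}_1(\epsilon)} P(\psi=0) \geq E_{P\sim\pi} P(\psi=0) = \bar P(\psi=0).
\end{equation}
So it suffices to lower bound $\bar P(\psi=0)$ uniformly over valid tests.

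First I compare $\bar P$ with $P_0$ on the event $\{\psi=0\}$. This gives
\begin{equation}
\bar P(\psi=0) \geq P_0(\psi=0) - \mathrm{TV}(\bar P,P_0) \geq 1-\alpha-\mathrm{TV}(\bar P,P_0).
\end{equation}
Next I control the total variation by the chi-squared divergence via Cauchy--Schwarz:
\begin{equation}
2\,\mathrm{TV}(\bar P,P_0)=E_{P_0}\left|\frac{d\bar P}{dP_0}-1\right| \leq \sqrt{\chi^2(\bar P,P_0)}.
\end{equation}
If $\bar P$ is not absolutely continuous with respect to $P_0$, the divergence is infinite and there is nothing to prove. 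This yields $\bar P(\psi=0)\geq 1-\alpha-\tfrac12\sqrt{\chi^2(\bar P,P_0)}$.

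It remains to choose the constant. Since $\beta<1-\alpha$, I take $C_{\alpha,\beta}$ with $\tfrac12\sqrt{C_{\alpha,\beta}}<1-\alpha-\beta$, for instance $C_{\alpha,\beta}=2(1-\alpha-\beta)^2$. Then $\chi^2(\bar P,P_0)\le C_{\alpha,\beta}$ implies $\bar P(\psi=0)>\beta$ for every $\psi\in\Psi_\alpha$. The bound is uniform in $\psi$, so it survives the infimum over $\Psi_\alpha$ and gives $R_*(\mathcal{P}_1(\epsilon))>\beta$.

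One subtlety is that the infimum over tests need not be attained. The argument handles this because the lower bound $1-\alpha-\tfrac12\sqrt{C_{\alpha,\beta}}$ is a fixed number strictly greater than $\beta$, so the infimum is also strictly greater than $\beta$. Randomized tests are covered in the same way by replacing $I(\psi=0)$ with $1-\psi\in[0,1]$, since the total variation bound applies to any function taking values in $[0,1]$.

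The argument has no serious obstacle. The only care needed is the strictness of the final inequality and the measurability and absolute-continuity caveats.
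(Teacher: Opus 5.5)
Your proposal is correct. The chain $\sup_{P\in\mathcal{P}_1(\epsilon)}P(\psi=0)\ge \bar P(\psi=0)\ge 1-\alpha-\mathrm{TV}(\bar P,P_0)$, together with the Cauchy--Schwarz bound $2\,\mathrm{TV}\le\sqrt{\chi^2}$ and the choice $C_{\alpha,\beta}=2(1-\alpha-\beta)^2$ (valid since $\beta<1-\alpha$), gives the strict inequality uniformly over $\Psi_\alpha$. The paper gives no proof of its own and defers to Ingster--Suslina and Tsybakov, and your mixture/total-variation/Cauchy--Schwarz reduction is essentially the standard argument found there.
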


\section{Oracle inference}

\subsection{Performance of the oracle test (Lemma \ref{lemma:upperbound_oracle_test_arbitrary_predictions})}\label{appx:ub_multiple_orthogonal_gaussian_seq_l2_testing}

The proof of Lemma \ref{lemma:upperbound_oracle_test_arbitrary_predictions} depends on the performance of the chi-squared test (Lemma \ref{lemma:chi_squared_test}) and the quantification of the worst-case projection onto the span of the predictions under the alternative hypothesis (Lemma \ref{lemma:Astar}). In the following, we state both lemmas and prove Lemma \ref{lemma:upperbound_oracle_test_arbitrary_predictions}. The proof of Lemma \ref{lemma:Astar} is provided at the end of this appendix.

The following lemma quantifies the performance of the chi-squared test.

\begin{lemma}[\citet{ingsterAsymptoticallyMinimaxHypothesisI1993}]\label{lemma:chi_squared_test} For hypotheses \eqref{eq:multiple_orthogonal_gaussian_seq_l2_testing}, there exists a positive constant $C$ that depends only on $\alpha$ and $\beta$ such that the chi-squared test \eqref{eq:chi_squared_test} is powerful whenever \begin{equation}\label{eq:chi_squared_test_rate}
n \geq C \cdot \frac{\sqrt{d}}{\epsilon^2}.
\end{equation}
\end{lemma}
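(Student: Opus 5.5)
The plan is to invoke the moment criterion \eqref{eq:moment_condition} of Proposition \ref{prop:chebyshev_bound}, taking $T(X)=n\norm{X}_2^2$. Under $P_\theta$, the vector $\sqrt{n}X\sim\Normal(\sqrt{n}\theta,I_d)$, so $T$ is a noncentral chi-squared with $d$ degrees of freedom and noncentrality $\lambda=n\norm{\theta}_2^2$. Its moments are
\begin{equation}
E_\theta[T]=d+n\norm{\theta}_2^2, \qquad V_\theta[T]=2d+4n\norm{\theta}_2^2.
\end{equation}
The null $\mathcal{P}_0=\{P_0\}$ is a singleton, so the threshold $t$ in Proposition \ref{prop:chebyshev_bound} is the quantile $\chi^2_{d,1-\alpha}$. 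Hence the test coincides with \eqref{eq:chi_squared_test} and is automatically valid. For the threshold we use the bound $\chi^2_{d,1-\alpha}\le d+\sqrt{2d/\alpha}$, which is the Chebyshev bound quoted at the end of that proposition.

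For power, note that the alternative of \eqref{eq:multiple_orthogonal_gaussian_seq_l2_testing} is contained in $\{\norm{\theta}_2\ge\epsilon\}$. So it suffices to check \eqref{eq:power_condition_for_test_statistic} uniformly over $\norm{\theta}_2\ge\epsilon$; the localization to $B_r$ plays no role. Writing $\lambda=n\norm{\theta}_2^2\ge n\epsilon^2$, the requirement becomes
\begin{equation}
d+\sqrt{2d/\alpha}\le d+\lambda-\sqrt{(2d+4\lambda)/\beta},
\end{equation}
which rearranges to
\begin{equation}
\lambda\ge\sqrt{2d/\alpha}+\sqrt{2d/\beta}+2\sqrt{\lambda/\beta}.
\end{equation}
Here I used $\sqrt{a+b}\le\sqrt a+\sqrt b$.

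Now set $\lambda\ge C\sqrt d$ with $C$ large, depending only on $\alpha$ and $\beta$. Then the first two terms on the right are at most $\lambda/2$. For the last term, $2\sqrt{\lambda/\beta}\le\lambda/2$ as soon as $\lambda\ge16/\beta$, and this follows from $\lambda\ge C\sqrt d\ge C$ because $d\ge1$. The one point needing care is that the right-hand side depends on $\lambda$. The inequality must hold for every $\lambda\ge n\epsilon^2$, not only at the boundary. This is handled by observing that $\lambda-2\sqrt{\lambda/\beta}$ is increasing once $\lambda\ge1/\beta$. Therefore $n\ge C\sqrt d/\epsilon^2$ gives the inequality uniformly, and Proposition \ref{prop:chebyshev_bound} delivers type-II error at most $\beta$, which is \eqref{eq:chi_squared_test_rate}. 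I expect no genuine obstacle; the only step that requires attention is this monotonicity, which makes the worst case over the alternative the boundary $\norm{\theta}_2=\epsilon$.
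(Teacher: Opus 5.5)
Your argument is correct and follows the route the paper's machinery is built for. The paper gives no proof of this lemma beyond citing Ingster, but applying the moment criterion of Proposition \ref{prop:chebyshev_bound} to the noncentral chi-squared statistic, with mean $d+n\norm{\theta}_2^2$ and variance $2d+4n\norm{\theta}_2^2$, is the same computation it carries out for its other quadratic-form tests (e.g., the intrinsic projection test in the proof of Lemma \ref{lemma:upperbound_adaptive_test_arbitrary_predictions}). Your closing monotonicity remark is harmless but unnecessary: the two bounds $\sqrt{d}\,(\sqrt{2/\alpha}+\sqrt{2/\beta})\le\lambda/2$ and $2\sqrt{\lambda/\beta}\le\lambda/2$ already hold for every $\lambda\ge C\sqrt{d}$, not only at the boundary $\norm{\theta}_2=\epsilon$.
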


The next lemma quantifies the worst-case projection onto the span of the predictions under the alternative hypothesis.
\begin{lemma}\label{lemma:Astar} If $\Rank \hat{U} < d$ and $S_\epsilon \cap B_r \neq \emptyset$, it follows that there exists a unique $(\Rank \hat{U})$-dimensional vector $A_*(\epsilon,r)$ such that \begin{equation}
A_*(\epsilon,r) = \arginf_{\theta \in S_\epsilon \cap B_r} \norm{\Pi_{\hat{U}}\theta}_2^2. 
\end{equation}
\end{lemma}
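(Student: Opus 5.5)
We read Lemma \ref{lemma:Astar} as saying that, writing $k=\Rank \hat{U}$ and $\tilde{U}_k$ for the orthonormal basis of the predictions' span, the minimizers $\theta$ of $\norm{\Pi_{\hat{U}}\theta}_2^2$ over $S_\epsilon \cap B_r$ (which exist by compactness) all share the same coordinate vector $\tilde{U}_k^T\theta \in \R^k$. The plan is to decompose $\theta = \tilde{U}_k a + w$ with $a\in\R^k$ and $w \perp \Span \hat{U}$. Every constraint depends on $w$ only through $\norm{w}_2$, because each $\hat{\theta}_i$ lies in the span:
\begin{equation}
\norm{\theta-\hat{\theta}_i}_2^2 = \norm{a - \tilde{U}_k^T\hat{\theta}_i}_2^2 + \norm{w}_2^2, \qquad \norm{\theta}_2^2 = \norm{a}_2^2 + \norm{w}_2^2 .
\end{equation}
On the sphere $S_\epsilon$ we have $\norm{w}_2^2 = \epsilon^2 - \norm{a}_2^2$, so the constraint $\theta\in B_r$ becomes $\epsilon^2 - 2\inner{a}{\tilde{U}_k^T\hat{\theta}_i} + h_i^2 \le r_i^2$ for every $i$. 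Each of these is a \emph{linear} (half-space) constraint on $a$. The remaining requirement is $\norm{a}_2^2 \le \epsilon^2$.

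Since $k<d$, the orthogonal complement of the span is nontrivial. Hence every $a$ with $\norm{a}_2\le\epsilon$ is realized by some $w$ of the right length. This gives the reduction
\begin{equation}
\inf_{\theta\in S_\epsilon\cap B_r}\norm{\Pi_{\hat{U}}\theta}_2^2 = \inf\Bigl\{\norm{a}_2^2 : a\in K\Bigr\},
\end{equation}
where $K = \{a\in\R^k : \norm{a}_2\le \epsilon,\ \inner{a}{\tilde{U}_k^T\hat{\theta}_i}\ge (\epsilon^2+h_i^2-r_i^2)/2 \ \forall i\}$. The set $K$ is an intersection of half-spaces with a closed ball, so it is closed and convex. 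It is nonempty because $S_\epsilon\cap B_r\neq\emptyset$.

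Uniqueness then follows by minimizing the strictly convex function $\norm{a}_2^2$ over the nonempty closed convex set $K$. The minimizer exists and is unique: it is the Euclidean projection of the origin onto $K$. We call it $A_*(\epsilon,r)$. Any minimizer $\theta$ of the original problem has $\tilde{U}_k^T\theta = A_*(\epsilon,r)$, and conversely $\theta = \tilde{U}_k A_*(\epsilon,r) + w$ with $\norm{w}_2^2=\epsilon^2-\norm{A_*}_2^2$ attains the infimum. This establishes the claim, with the argmin understood as the common projected coordinate vector. The same computation recovers $A_*=A_+$ in the orthogonal case, since there $K$ is a box-like set whose nearest point to the origin is $A_+$ coordinatewise.

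The main obstacle is justifying the reduction, specifically the step where the nonconvex sphere constraint turns into the convex ball constraint $\norm{a}_2\le\epsilon$ on the projected coordinates. This step needs $k<d$ so that $w$ can absorb the slack $\epsilon^2-\norm{a}_2^2$; when $k=d$ the sphere constraint persists and uniqueness can fail. The rest of the argument is routine.
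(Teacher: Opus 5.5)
Your proposal is correct and takes essentially the same route as the paper. Both arguments decompose $\theta=\tilde{U}_k a + w$ and use $k<d$ to show that the attainable projected coordinates form exactly the closed convex set $\{a:\norm{a}_2\le\epsilon,\ \hat{U}^T\tilde{U}_k a\ge A(\epsilon,r)\}$; your half-space constraints are the paper's after dividing by $h_i$. Both then conclude uniqueness from minimizing the strictly convex $\norm{a}_2^2$ over this nonempty set.
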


We proceed to prove Lemma \ref{lemma:upperbound_oracle_test_arbitrary_predictions}.

\begin{proof}[Proof of Lemma \ref{lemma:upperbound_oracle_test_arbitrary_predictions}]

In the following, we indicate the conditions under which the chi-squared test \eqref{eq:chi_squared_test}, the worst-case cosine similarity test \eqref{eq:correlation_test}, and the likelihood-ratio test \eqref{eq:likelihood_ratio_test} are valid and powerful. Finally, we combine the conditions to state when the oracle test \eqref{eq:oracle_test} is valid and powerful.

Throughout the proof, we let  $\Rank \hat{U}=k$, and let the reduced singular value decomposition of $\hat{U}$ be \begin{equation}\label{eq:reduced_svd_1}
\hat{U}= \tilde{U}_k\Lambda_k\tilde{V}_k^T,
\end{equation} where the columns of $\tilde{U}_k$ span the same $k$-dimensional subspace spanned by $\hat{U}$. Furthermore, let the columns of $\tilde{U}_k^\perp$ be a basis for its orthogonal complement, a $(d-k)$-dimensional subspace.

\noindent\textbf{Analysis when $S_\epsilon$ and $B_r$ are not disjoint.} If $S_\epsilon \cap B_r \neq \emptyset$, the oracle test \eqref{eq:oracle_test} performs a Bonferroni correction between the chi-squared test \eqref{eq:chi_squared_test} and the worst-case cosine similarity test \eqref{eq:correlation_test}: \begin{equation}\label{eq:oracle_test_under_nonempty_intersection}
\psi(X) = \psi_{\chi^2}^{\alpha/2}(X) \lor \psi_{\Corr}^{\alpha/2}(X) \quad \textif S_\epsilon \cap B_r \neq \emptyset.
\end{equation} Let us first analyze the chi-squared test. By Lemma \ref{lemma:chi_squared_test}, it follows that the oracle test is powerful whenever \begin{equation}\label{eq:cond_chi2_power}
n \geq C_0 \cdot \frac{\sqrt{d}}{\epsilon^2} \textand S_\epsilon \cap B_r \neq \emptyset.
\end{equation} where $C_0$ is a constant that depends only on $\beta$ and $\alpha$. Turning to the  worst cosine similarity test \eqref{eq:correlation_test}, it holds that  $\psi_{\Corr}^{\alpha/2}(X) = I\left(T_r(X) \geq z_{1-{\alpha/2} }\right)$ where \begin{equation} 
T_r(X)=\sqrt{n}\cdot \inner{\tilde{U}_k^TX}{\tilde{u}(\epsilon,r)} \textand \tilde{u}(\epsilon,r) = \frac{A_*(\epsilon,r)}{\norm{A_*(\epsilon,r)}_2}.
\end{equation} By \eqref{eq:oracle_test_under_nonempty_intersection}, Proposition \ref{prop:chebyshev_bound}, and the fact that \begin{equation}
E_{P_\theta}[T_r(X)] = \sqrt{n}\cdot \inner{\tilde{U}_k^T\theta}{\tilde{u}(\epsilon,r)} \textand V_{P_\theta}[T_r(X)] = 1\quad \forall \theta \in \R^d,
\end{equation}  it holds that the oracle test is powerful whenever \begin{equation}\label{eq:power_condition_correlation_test}
\inf_{\theta \in H_1(\epsilon,r)} E_{P_\theta}[T_r(X)] \geq C_1 \cdot 1 \textand S_\epsilon \cap B_r \neq\emptyset
\end{equation} where $
H_1(\epsilon,r) = \{\theta \in \R^d: \norm{\theta}_2 \geq \epsilon \textand \theta \in B_r\}$ and $C_1$ is a constant that depends on $\beta$ and $\alpha$. Since $S_\epsilon \cap B_r \neq\emptyset$, it holds that \begin{equation}
\inf_{\theta \in H_1(\epsilon,r)} E_{P_\theta}[T_r(X)] = \inf_{\theta \in S_\epsilon \cap B_r} E_{P_\theta}[T_r(X)]. 
\end{equation} We compute the RHS of \eqref{eq:power_condition_correlation_test}. Since $k < d$ and $S_\epsilon \cap B_r \neq \emptyset$, it follows by Lemma \ref{lemma:Astar} that there exists a unique $k$-dimensional vector $A_*(\epsilon,r)$ such that \begin{equation}
\norm{A_*(\epsilon,r)}_2^2 = \inf_{\theta \in B_r \cap S_\epsilon}\norm{\Pi_{\hat{U}}\theta}_2^2 = \inf_{\theta \in B_r \cap S_\epsilon}\norm{\tilde{U}_k^T\theta}_2^2 = \gamma_*^2(\epsilon,r).
\end{equation} Consequently, it holds that \begin{equation}
\inner{\tilde{U}_k^T\theta-A_*(\epsilon,r)}{A_*(\epsilon,r)} \geq 0 \quad \forall \theta \in B_r \cap S_\epsilon,
\end{equation} and \begin{align}
\inf_{\theta \in B_r \cap S_\epsilon} \inner{\tilde{U}_k^T\theta}{A_*(\epsilon,r)} &= \inf_{\theta \in B_r \cap S_\epsilon} \norm{A_*(\epsilon,r)}_2^2 + \inner{\tilde{U}_k^T\theta-A_*(\epsilon,r)}{A_*(\epsilon,r)}\\
&=\norm{A_*(\epsilon,r)}_2^2.
\end{align} Therefore, it holds that \begin{equation}\label{eq:power_condition_correlation_test_2}
\inf_{\theta \in S_\epsilon \cap B_r} E_{P_\theta}[T_r(X)] = \sqrt{n}\cdot \norm{A_*(\epsilon,r)}_2= \sqrt{n}\cdot \gamma_*(\epsilon,r).
\end{equation} Thus, by \eqref{eq:power_condition_correlation_test} and \eqref{eq:power_condition_correlation_test_2}, it holds that the oracle test \eqref{eq:oracle_test} is powerful whenever \begin{equation}\label{eq:cond_proj_power}
\sqrt{n}\cdot \gamma_*(\epsilon,r) \geq C_1 \textand S_\epsilon \cap B_r \neq\emptyset,
\end{equation} where $C_1$ is a positive constant that depends only on $\alpha$ and $\beta$. Finally, by \eqref{eq:cond_chi2_power}, \eqref{eq:cond_proj_power} and Proposition \ref{prop:Bonferroni}, the oracle test is valid and powerful whenever \begin{equation}\label{eq:cond_bonferroni_power}
n \geq C_2 \cdot \left(\frac{1}{\gamma_*^2(\epsilon,r)} \wedge \frac{\sqrt{d}}{\epsilon^2}\right) \textand S_\epsilon \cap B_r \neq\emptyset,
\end{equation} where $C_2$ is a positive constant that depends only on $\alpha$ and $\beta$.

\noindent\textbf{Analysis of oracle test when $S_\epsilon$ and $B_r$ are disjoint.} When $S_\epsilon \cap B_r = \emptyset$, there is a unique point under the alternative hypothesis that is closest to the null hypothesis, and the oracle test performs a likelihood ratio between them: \begin{equation}
\psi(X) = \psi^{\alpha}_{\LR}(X) \quad \textif S_\epsilon \cap B_r = \emptyset.
\end{equation} It follows that the oracle test is powerful whenever \begin{equation}\label{eq:cond_lrt_power}
n \geq C_3 \cdot \norm{\theta_*(r)}_2^{-2} \textand S_\epsilon \cap B_r = \emptyset,
\end{equation} where $C_3$ is a positive constant that depends only on $\alpha$ and $\beta$.

\noindent\textbf{In summary}, by \eqref{eq:cond_bonferroni_power} and  \eqref{eq:cond_lrt_power}, it holds that the oracle test \eqref{eq:oracle_test} is valid and powerful whenever \begin{equation}
n \geq C\cdot \begin{dcases}
\frac{1}{\gamma_*^2(\epsilon,r)} \wedge \frac{\sqrt{d}}{\epsilon^2} &\textif S_\epsilon \cap B_r \neq \emptyset\\
\frac{1}{\norm{\theta_*(r)}_2^2}&\textif S_\epsilon \cap B_r = \emptyset
\end{dcases}
\end{equation} where $C$ is a positive constant that depends only on $\alpha$ and $\beta$. 

\end{proof}

We finish this appendix by proving Lemma \ref{lemma:Astar}.

\begin{proof}[Proof of Lemma \ref{lemma:Astar}]

Let $\Rank \hat{U}=k$, using the reduced singular value decomposition of $\hat{U}$ \eqref{eq:reduced_svd_1}, we can rewrite $A_*(\epsilon,r)$ as  \begin{equation}\label{eq:Astar_def}
A_*(\epsilon,r) = \arginf_{a \in \mathcal{A}(\epsilon,r)} \norm{a}_2^2 \where \mathcal{A}(\epsilon,r) = \left\{\tilde{U}_k^T\theta: \theta \in S_\epsilon \cap B_r\right\} \subseteq \R^k.
\end{equation} Since $S_\epsilon \cap B_r \neq \emptyset$, it follows that $\mathcal{A}(\epsilon,r) \neq \emptyset$. We prove that if $k < d$, it holds that \begin{equation}
\mathcal{A}(\epsilon,r) = \left\{a \in \R^k : \norm{a}_2 \leq \epsilon \textand \hat{U}^T\tilde{U}_ka \geq A(\epsilon,r)\right\}.
\end{equation} Thus, $\mathcal{A}(\epsilon,r)$ is the intersection of a convex ball and closed half-spaces, so it is closed and convex. Consequently, the minimizer of \eqref{eq:Astar_def} is unique. 

First, we prove that for any $a \in \R^k$ such that $\norm{a}_2 \leq \epsilon$ and $\hat{U}^T\tilde{U}_ka \geq A(\epsilon,r)$, there exists a $\theta \in S_\epsilon \cap B_r$ such that $\tilde{U}_k^T\theta=a$. Since $k < d$, there exists unit vector $z \in \Span \tilde{U}_k^\perp$. Let \begin{equation}
\theta  = \tilde{U}_ka + z \cdot \sqrt{\epsilon^2-\norm{a}_2^2}.
\end{equation} By definition, it follows that $\norm{\theta}_2 = \epsilon$ and $\tilde{U}_k^T\theta=a$. Furthermore, for $i \in [m]$, it holds that \begin{align}
\norm{\theta-\hat{\theta}_i}_2^2 &= \norm{\theta}_2^2 + h_i^2 - 2h_i\cdot \inner{\theta}{\hat{u}_i}\\
&= \norm{\theta}_2^2 + h_i^2 - 2h_i\cdot \inner{\tilde{U}_ka}{\hat{u}_i}\\
&= \norm{\theta}_2^2 + h_i^2 - 2h_i\cdot [\hat{U}^T\tilde{U}_ka]_i
\end{align} Since $\hat{U}^T\tilde{U}_ka \geq A(\epsilon,r)$, it follows that \begin{equation}
\norm{\theta-\hat{\theta}_i}_2^2 \leq \epsilon^2 + h_i^2 - 2h_i\cdot A_i(\epsilon,r) = r_i^2.
\end{equation} Thus, $\theta \in B_r$, and consequently,  $\tilde{U}_k^T\theta=a$ and $\theta \in B_r \cap S_\epsilon$.

For the reverse inclusion, we prove that for any $\theta \in S_\epsilon \cap B_r$ such that $\tilde{U}_k^T\theta=a$, it holds that $\norm{a}_2 \leq \epsilon$ and $\hat{U}^T\tilde{U}_ka \geq A(\epsilon,r)$. Decompose $\theta$ as \begin{equation}
\theta = \tilde{U}_ka + z \where z \in \Span \tilde{U}_k^\perp.
\end{equation} Since $\theta \in S_\epsilon$, it holds that $\epsilon^2 = \norm{\theta}_2^2 = \norm{a}_2^2 + \norm{z}_2^2$. Thus, $\norm{a}_2 \leq \epsilon$. Furthermore, since $\theta \in B_r$, for $i \in [m]$, it holds that \begin{equation}
r_i^2 \geq \norm{\theta-\hat{\theta}_i}_2^2 = \epsilon^2 + h_i^2 - 2h_i\cdot [\hat{U}^T\tilde{U}_ka]_i, 
\end{equation} which is equivalent to \begin{equation}
[\hat{U}^T\tilde{U}_ka]_i \geq A_i(\epsilon,r) \for i \in [m].
\end{equation} That is, $\hat{U}^T\tilde{U}_ka \geq A(\epsilon,r)$. In summary, $\norm{a}_2 \leq \epsilon$ and $\hat{U}^T\tilde{U}_ka \geq A(\epsilon,r)$.

\end{proof}

\subsection{Lower bound on the performance of oracle tests (Lemma \ref{lemma:lowerbound_oracle_test_arbitrary_predictions})}\label{appx:lb_multiple_orthogonal_gaussian_seq_l2_testing}

We will need the following result before proving Lemma \ref{lemma:lowerbound_oracle_test_arbitrary_predictions}.

\begin{proposition}[\citet{ingsterNonparametricGoodnessofFitTesting2003}]\label{prop:IngsterSuslina} Let $P_\theta$ denote the distribution $\Normal(\theta, I_d/n)$ for $\theta \in \R^d$. Consider $P_0$ and $P_\pi = E_{\theta \in \pi}[P_{\theta}]$. It holds that \begin{equation}
1 + \chi^2\left(P_\pi,P_0\right) = E_{\theta,\theta'\iid \pi}\left[h(\theta,\theta')\right] \where h(\theta,\theta') = \exp{\left(n \cdot \inner{\theta}{\theta'}\right)}.
\end{equation}
\end{proposition}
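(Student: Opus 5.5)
We write the mixture density directly. For a prior $\pi$ on $\R^d$, the mixture $P_\pi$ has density with respect to $P_0$ equal to the likelihood ratio
\begin{equation}
L(x)=E_{\theta\sim\pi}\left[\frac{dP_\theta}{dP_0}(x)\right]=E_{\theta\sim\pi}\left[\exp\left(n\inner{\theta}{x}-\tfrac{n}{2}\norm{\theta}_2^2\right)\right].
\end{equation}
This holds because the Gaussian density with covariance $I_d/n$ is proportional to $\exp(-\tfrac{n}{2}\norm{x-\theta}_2^2)$. Expanding the square cancels the $\norm{x}_2^2$ term against the null density.

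Next we use $1+\chi^2(P_\pi,P_0)=E_{P_0}[L(X)^2]$. Squaring $L$ turns it into a double expectation over two independent draws $\theta,\theta'\iid\pi$. By Tonelli's theorem (the integrand is nonnegative) we may exchange the order of integration:
\begin{equation}
E_{P_0}[L^2]=E_{\theta,\theta'}\left[\exp\left(-\tfrac{n}{2}(\norm{\theta}_2^2+\norm{\theta'}_2^2)\right)E_{X\sim P_0}\exp\left(n\inner{\theta+\theta'}{X}\right)\right].
\end{equation}

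The inner expectation is the Gaussian moment generating function. Under $P_0$, $X\sim\Normal(0,I_d/n)$, so $E\exp(\inner{t}{X})=\exp(\norm{t}_2^2/(2n))$. Taking $t=n(\theta+\theta')$ gives $\exp(\tfrac{n}{2}\norm{\theta+\theta'}_2^2)$.

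Combining the exponents,
\begin{equation}
\tfrac n2\left(\norm{\theta+\theta'}_2^2-\norm{\theta}_2^2-\norm{\theta'}_2^2\right)=n\inner{\theta}{\theta'},
\end{equation}
which yields $1+\chi^2(P_\pi,P_0)=E_{\theta,\theta'\iid\pi}[h(\theta,\theta')]$.

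The only delicate point is justifying the exchange of integrals, and Tonelli handles it. The identity is to be read in $[0,\infty]$, so no integrability assumption on $\pi$ is needed.
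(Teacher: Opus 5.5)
Your derivation is correct and complete. You compute the mixture likelihood ratio, write its square as a double prior expectation, and use Tonelli (justified by nonnegativity) to swap it with the $P_0$-expectation. Then the Gaussian moment generating function and polarization give $n\langle\theta,\theta'\rangle$, and reading the identity in $[0,\infty]$ removes any integrability assumption on $\pi$.

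The paper does not prove this proposition; it simply cites it from Ingster and Suslina. Your argument is the standard derivation of this identity, so it supplies exactly what the paper omits.
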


\noindent We proceed to prove Lemma \ref{lemma:lowerbound_oracle_test_arbitrary_predictions}.

\begin{proof}[Proof of Lemma \ref{lemma:lowerbound_oracle_test_arbitrary_predictions}]

Let $\Rank \hat{U}=k$, and let the reduced singular value decomposition  of $\hat{U}$ be \begin{equation}
\hat{U}= \tilde{U}_k\Lambda_k\tilde{V}_k^T  \where \Lambda_k = \text{Diagonal}(\lambda_1,\dots,\lambda_k),
\end{equation} $\lambda_1^2 \geq \dots \geq \lambda_k^2 >0$, $\tilde{U}_k$ spans the same $k$-dimensional subspace spanned by $\hat{U}$. Furthermore, let $\tilde{U}_k^\perp$ be a basis for its orthogonal complement, a $(d-k)$-dimensional subspace. 

Since $k \leq d/2$ and $S_\epsilon \cap B_r \neq \emptyset$, it follows  by Lemma \ref{lemma:Astar} that there exists a unique $k$-dimensional vector $A_*(\epsilon,r)$ that satisfies \begin{equation}\label{eq:definition_A_tilde}
\norm{A_*(\epsilon,r)}_2^2 = \min_{\theta \in S_\epsilon \cap B_r} \norm{\Pi_{\hat{U}}\theta}_2^2 = \min_{\theta \in S_\epsilon \cap B_r} \norm{\tilde{U}_k^T\theta}_2^2.
\end{equation} Define $\pi$ to be the distribution of $\theta$ where
\begin{equation}\label{eq:multiple_orthogonal_pi_definition}
\theta = \tilde{U}_k \cdot A_*(\epsilon,r) + \frac{\rho(\epsilon)}{\sqrt{d-k}}\cdot \tilde{U}_k^\perp \cdot z,
\end{equation} where $z \sim \text{Uniform}\left(\sqrt{d-k} \cdot S_2^{d-k-1}\right)$ and $\rho(\epsilon)=\sqrt{\epsilon^2-\norm{A_*(\epsilon,r)}_2^2}$. Note that by \eqref{eq:definition_A_tilde}, \begin{equation}
\norm{A_*(\epsilon,r)}_2^2 = \min_{\theta \in S_\epsilon \cap B_r}\norm{\Pi_{\tilde{U}_k}\theta}_2^2 \leq  \min_{\theta \in S_\epsilon \cap B_r}\norm{\theta}_2^2  =\epsilon^2.
\end{equation} Thus, $\rho(\epsilon)$ is well defined. Furthermore, by the definition of $\pi$, it holds that \begin{equation}
\norm{\theta}_2 = \epsilon \textand \hat{U}^T\theta =  \tilde{V}_k\Lambda_kA_*(\epsilon,r)  \quad \forall \theta \in \supp \pi.
\end{equation} By \eqref{eq:definition_A_tilde}, it follows that there exists $\tilde{\theta} \in B_r \cap S_\epsilon$ such that \begin{equation}
A_*(\epsilon,r) = \tilde{U}_k^T\tilde{\theta}.
\end{equation} Consequently, we have that \begin{equation}
\norm{\theta}_2 = \epsilon \textand \hat{U}^T\theta =  \tilde{V}_k\Lambda_k\tilde{U}_k^T\tilde{\theta} = \hat{U}^T\tilde{\theta} \geq A(\epsilon,r) \quad \forall \theta \in \supp \pi,
\end{equation} since any $\tilde{\theta} \in S_\epsilon \cap B_r$ satisfies the constraint  $\hat{U}^T\tilde{\theta}\geq A(\epsilon,r)$. This implies that $\pi$ is supported on the alternative hypothesis since \begin{equation}
\norm{\theta}_2 = \epsilon \textand \norm{\theta-\hat{\theta}_i}_2\leq r_i \for i \in [m] \iff \norm{\theta}_2 = \epsilon \textand\hat{U}^T\theta \geq A(\epsilon,r).
\end{equation} By Proposition \ref{prop:IngsterSuslina} and \eqref{eq:multiple_orthogonal_pi_definition}, it holds that \begin{equation}
1 + \chi^2\left(P_\pi,P_0\right) = E_{\theta,\theta'\iid \pi}\exp{\left(n \cdot \inner{\theta}{\theta'}\right)}
\end{equation} where \begin{equation}
\inner{\theta}{\theta'} = \norm{A_*(\epsilon,r)}_2^2 + \frac{\rho^2(\epsilon)}{d-k} \cdot \inner{z}{z'} \textand z,z' \iid \text{Uniform}\left(\sqrt{d-k} \cdot S_2^{d-k-1}\right).
\end{equation} Hence, we have that \begin{equation}
1 + \chi^2\left(P_\pi,P_0\right) = \exp\left(n \cdot \norm{A_*(\epsilon,r)}_2^2 \right) \cdot E_{z,z'\iid \text{Uniform}\left(\sqrt{d-k} \cdot S_2^{d-k-1}\right)}\exp\left(\frac{n\rho^2(\epsilon)}{d-k} \cdot \inner{z}{z'} \right).
\end{equation} Since the distribution of $z$ is $C_0$-sub-Gaussian, where $C_0$ is a positive constant that does not depend on $d$, it follows that \begin{equation}
1 + \chi^2\left(P_\pi,P_0\right) \leq \exp\left(n \cdot \norm{A_*(\epsilon,r)}_2^2 \right) \cdot E_{z'}\exp\left(\frac{C^2_0}{2}\cdot \frac{n^2}{(d-k)^2} \cdot \rho^4(\epsilon) \cdot \norm{z'}_2^2 \right)
\end{equation} where $\norm{z'}_2^2=d-k$. Thus, for any $\eta>0$, there exists $C_\eta$ such that $ \chi^2\left(P_\pi,P_0\right) \leq \eta$, whenever \begin{equation}
n \cdot \norm{A_*(\epsilon,r)}_2^2 + \frac{C^2_0}{2}\cdot \frac{n^2}{d-k}\cdot \rho^4(\epsilon) \leq C_\eta.
\end{equation} Using the assumption that $\Rank \hat{U} = k \leq d/2$ and the fact that $\rho^4(\epsilon) \leq \epsilon^4$, the above equation is implied by \begin{equation}
n \cdot \norm{A_*(\epsilon,r)}_2^2 + C^2_0\cdot \frac{n^2}{d}\cdot \epsilon^4 \leq C_\eta.
\end{equation} The statement of the lemma follows by Proposition \ref{prop:two_fuzzy_hypotheses}.
\end{proof}

\section{Adapting to orthogonal predictions}

\subsection{Performance of the adaptive test (Lemma \ref{lemma:upperbound_adaptive_test_orthogonal_predictions})}\label{appx:ub_union_multiple_orthogonal_gaussian_seq_l2_testing}

If tighter control on the type-I error is needed, we can alternatively aggregate the test statistics of the test \eqref{eq:projection_test} and \eqref{eq:orthogonal_projection_test}:  \begin{equation}\label{eq:no_bonferroni_test_with_standard_projection}
\psi^{\alpha}(X)= I\left(n \cdot \frac{\norm{\Pi_{\hat{U}}X}_2^2}{\sqrt{\Rank \hat{U}}} +  n \cdot \frac{\norm{\Pi_{\hat{U}^\perp}X}_2^2}{\sqrt{d-\Rank \hat{U}}} \geq q_{1-\alpha}\right)
\end{equation} where $q_{1-\alpha}$ is the $(1-\alpha)$-quantile of the mixture $$\frac{\chi^2_{\Rank \hat{U}}}{\sqrt{\Rank \hat{U}}} + \frac{\chi^2_{d-\Rank \hat{U}}}{\sqrt{d-\Rank \hat{U}}}.$$ The test \eqref{eq:no_bonferroni_test_with_standard_projection} achieves the same type-I and type-II error guarantees as the test \eqref{eq:bonferroni_test_with_standard_projection} up to constants. Thus, we prove Lemma \ref{lemma:upperbound_adaptive_test_orthogonal_predictions} using the test \eqref{eq:no_bonferroni_test_with_standard_projection}, but note that the same result holds for the test \eqref{eq:bonferroni_test_with_standard_projection}.

\begin{proof}[Proof of Lemma \ref{lemma:upperbound_adaptive_test_orthogonal_predictions}]

Since the predictions are mutually orthogonal, $\Rank \hat{U} = k = m$. Thus, $k$ and $m$ are equal. Throughout the proof, we use $k$ rather than $m$.

\noindent\textbf{Power analysis for the test \eqref{eq:no_bonferroni_test_with_standard_projection}.} For the test statistic of the adaptive test \eqref{eq:no_bonferroni_test_with_standard_projection}, it holds that \begin{equation}
n \cdot \frac{\norm{\Pi_{\hat{U}}X}_2^2}{\sqrt{k}} +  n \cdot \frac{\norm{\Pi_{\hat{U}^\perp}X}_2^2}{\sqrt{d-k}} \sim \frac{\chi^2_k\left(n \cdot \norm{\Pi_{\hat{U}}\theta}_2^2 \right)}{\sqrt{k}} + \frac{\chi^2_{d-k}\left(n \cdot \norm{\Pi_{\hat{U}^\perp}\theta}_2^2 \right)}{\sqrt{d-k}}  \quad \forall \theta \in \R^d.
\end{equation} Let $H_1$ denote the set of distributions under the alternative hypothesis \begin{align}
H_1 &= \bigcup_{r \in \R_+^k}H_1(\epsilon(r),r)\\
\where H_1(\epsilon,r) &= \left\{\theta \in \R^d: \norm{\theta}_2\geq \epsilon \textand \norm{\theta-\hat{\theta}_i}_2 \leq r_i \for i \in [k]\right\}.
\end{align} By Proposition \ref{prop:chebyshev_bound}, it follows that the test \eqref{eq:no_bonferroni_test_with_standard_projection} controls the type-II error by $\beta$ whenever \begin{equation}\label{eq:union_orthogonal_projection_test_condition}
n  \cdot \inf_{\theta \in H_1} f(\theta) \geq C \where f(\theta) = \frac{\norm{\Pi_{\hat{U}}\theta}_2^2}{\sqrt{k}} + \frac{\norm{\Pi_{\hat{U}^\perp}\theta}_2^2}{\sqrt{d-k}}
\end{equation} and $C$ is a constant that only depends on $\alpha$ and $\beta$.

\noindent\textbf{Minimum projection.} We proceed to characterize the minimum projection onto the span of $\hat{U}$. For any $\theta \in H_1$, there exists $r \in \R_+^k$ such that $\theta \in H_1(\epsilon(r),r)$. Furthermore, note that \begin{equation}
\gamma_*^2(\epsilon(r),r) \leq \epsilon^2(r) \iff S_{\epsilon(r)} \cap B_r \not= \emptyset  \where B_r = \bigcap_{i=1}^k B_2^d(\hat{\theta}_i,r_i).
\end{equation} In particular, if $\gamma_*^2(\epsilon(r),r) \leq \epsilon^2(r)$, we have that \begin{equation}
\inf_{\theta \in H_1(\epsilon(r),r)} f(\theta) = \inf_{\theta \in H_1(\epsilon(r),r)} \frac{\norm{\Pi_{\hat{U}}\theta}_2^2}{\sqrt{k}} + \frac{\epsilon^2(r)-\norm{\Pi_{\hat{U}}\theta}_2^2}{\sqrt{d-k}} \textand 
\inf_{\theta \in H_1(\epsilon(r),r)} \norm{\Pi_{\hat{U}}\theta}_2^2 = \gamma_*^2(\epsilon(r),r).
\end{equation} Therefore, the solution to the optimization is \begin{align}
\inf_{\theta \in H_1(\epsilon(r),r)}f(\theta) &= \inf_{\norm{\Pi_{\hat{U}}\theta}_2^2 \in [\gamma_*^2(\epsilon(r),r),\epsilon^2(r)]} \frac{\norm{\Pi_{\hat{U}}\theta}_2^2}{\sqrt{k}} + \frac{\epsilon^2(r)-\norm{\Pi_{\hat{U}}\theta}_2^2}{\sqrt{d-k}}\\
&= \begin{dcases}
\frac{\epsilon^2(r)}{\sqrt{k}} &\textif k > d/2\\
% \frac{2\epsilon^2(r)}{\sqrt{d-k}+\sqrt{k}} &\textif \gamma_*^2(\epsilon(r),r) \leq \epsilon^2(r) \cdot c_k \\
\frac{\gamma_*^2(\epsilon(r),r)}{\sqrt{k}} + \frac{\epsilon^2(r)-\gamma_*^2(\epsilon(r),r)}{\sqrt{d-k}} &\textif k \leq d/2%\textif \epsilon^2(r) \cdot c_k\leq \gamma_*^2(\epsilon(r),r) \leq \epsilon^2(r)
\end{dcases}\\
&\geq \begin{dcases}
\frac{\epsilon^2(r)}{\sqrt{d}} &\textif k > d/2 \textor \left(k \leq d/2 \textand \gamma_*^2(\epsilon(r),r) \leq \epsilon^2(r) \cdot c_k\right)\\
\frac{\gamma_*^2(\epsilon(r),r)}{\sqrt{k}}  &\textif k \leq d/2 \textand \epsilon^2(r) \cdot c_k\leq \gamma_*^2(\epsilon(r),r) \leq \epsilon^2(r)
\end{dcases}
% \label{eq:union_case1}
\end{align} where $c_k=\left[\sqrt{\frac{d}{k}-1}+1\right]^{-1}$. Equivalently, it holds that \begin{equation}
\inf_{\theta \in H_1(\epsilon(r),r)}f(\theta) 
\geq \begin{dcases}
\frac{\epsilon^2(r)}{\sqrt{d}} &\textif k > d/2\\
\frac{\gamma_*^2(\epsilon(r),r)}{\sqrt{k}}  \wedge \frac{\epsilon^2(r)}{\sqrt{d}} &\textif k \leq d/2 \textand S_{\epsilon(r)} \cap B_r \not= \emptyset
\end{dcases}
\label{eq:union_case1}
\end{equation}

\noindent\textbf{Disjoint neighborhoods.} Conversely, it holds that \begin{equation}
\gamma_*^2(\epsilon(r),r) > \epsilon^2(r) \iff S_{\epsilon(r)} \cap B_r = \emptyset \empty \where B_r = \bigcap_{i=1}^k B_2^d(\hat{\theta}_i,r_i)
\end{equation} Thus, for all $\theta \in B_r$ it holds that $\norm{\theta}_2 > \epsilon(r)$. Thus, $H_1(\epsilon(r),r)=B_r$, and the weighted minimum projection of $\theta$ onto the spans of $\hat{U}$ and $\hat{U}^\perp$ does not depend on $\epsilon(r)$: \begin{equation}
\inf_{\theta \in H_1(\epsilon(r),r)}f(\theta) = \inf_{\theta \in B_r}f(\theta) =\inf_{\theta \in B_r}\frac{\norm{\Pi_{\hat{U}}\theta}_2^2}{\sqrt{k}} + \frac{\norm{\Pi_{\hat{U}^\perp}\theta}_2^2}{\sqrt{d-k}}
\end{equation} We can rewrite this optimization. Namely, let \begin{equation}
\theta = \hat{U}x + \hat{U}^\perp z \where x \in \R^k \textand z \in \R^{d-k}.
\end{equation} Since $\hat{U}$ has orthonormal columns, it follows that \begin{equation}
\norm{\theta}_2^2 = \norm{x}_2^2 + \norm{z}_2^2 \textand \norm{\theta-\hat{\theta}_i}_2^2=\norm{x-h_i\cdot e_i}_2^2 + \norm{z}_2^2 \for i \in [k].
\end{equation} Thus, we can rewrite the optimization problem as \begin{equation}
\inf_{x \in \R^k, z \in \R^{d-k}} \frac{\norm{x}_2^2}{\sqrt{k}} +  \frac{\norm{z}_2^2}{\sqrt{d-k}}\st \norm{x}_2^2 \geq \sum_{i=1}^k\left(\frac{\norm{x}_2^2+h_i^2-r_i^2+\norm{z}_2^2}{2h_i}\right)_+^2.
\end{equation} Note that $z\neq 0$ increases  $\norm{z}_2^2$ and the norm of $x$ due to the constraint. Thus, the optimizer sets $z=0$. Therefore, we have that \begin{equation}
\inf_{\theta \in H_1(\epsilon(r),r)}f(\theta) = \inf_{\theta \in B_r}f(\theta) =\inf_{\theta \in B_r}\frac{\norm{\Pi_{\hat{U}}\theta}_2^2}{\sqrt{k}} + \frac{\norm{\Pi_{\hat{U}^\perp}\theta}_2^2}{\sqrt{d-k}} = \inf_{\theta \in B_r}\frac{\norm{\Pi_{\hat{U}}\theta}_2^2}{\sqrt{k}} 
\end{equation} Recalling that $\inf_{\theta \in B_r}\norm{\Pi_{\hat{U}}\theta}_2^2=\inf_{\theta \in B_r}\norm{\theta}_2^2$, it follows that \begin{equation}\label{eq:union_case2}
\inf_{\theta \in H_1(\epsilon(r),r)}f(\theta) = \inf_{\theta \in B_r}f(\theta) = \inf_{\theta \in B_r}\frac{\norm{\theta}_2^2}{\sqrt{k}} = \frac{\norm{\theta_*(r)}_2^2}{\sqrt{k}}
\end{equation} where $\theta_*(r)$ is the unique closest point to the null hypothesis, defined in \eqref{eq:theta_star}.

\noindent\textbf{In summary,} by \eqref{eq:union_case1} and \eqref{eq:union_case2}, for any $\theta \in H_1$, there exists $r \in \R_+^k$ such that $\theta \in H_1(\epsilon(r),r)$, and it holds that \begin{equation}\label{eq:union_minimum_projection}
\inf_{\theta \in H_1(\epsilon(r),r)}f(\theta)\geq \begin{dcases}
\frac{\epsilon^2(r)}{\sqrt{d}} &\textif k > d/2\\
\frac{\gamma_*^2(\epsilon(r),r)}{\sqrt{k}}  \wedge \frac{\epsilon^2(r)}{\sqrt{d}}&\textif k \leq d/2 \textand S_{\epsilon(r)} \cap B_r \not= \emptyset \\
\frac{\norm{\theta_*(r)}_2^2}{\sqrt{k}} &\textif S_{\epsilon(r)} \cap B_r = \emptyset
\end{dcases}\end{equation} Consequently, by \eqref{eq:union_minimum_projection} and the definition of $\epsilon(r)$, see \eqref{eq:epsilon_star_orthogonal_predictions}, it follows that \begin{equation}
n\cdot \inf_{\theta \in H_1}f(\theta) = \inf_{r \in \R_+^k}\left[n\cdot \inf_{\theta \in H_1(\epsilon(r),r)}f(\theta)\right]\geq \inf_{r \in \R_+^k} C \geq C.
\end{equation} Thus, by \eqref{eq:union_orthogonal_projection_test_condition}, the type-II error is uniformly controlled under the alternative hypothesis by the projection test \eqref{eq:no_bonferroni_test_with_standard_projection}.

\end{proof}

\subsection{Lower bound on the performance of adaptive tests (Lemma \ref{lemma:lowerbound_adaptive_test_orthogonal_predictions})}\label{appx:lb_union_multiple_orthogonal_gaussian_seq_l2_testing}

\begin{proof}[Proof of Lemma \ref{lemma:lowerbound_adaptive_test_orthogonal_predictions}]

Given any separation curve $\tilde{\epsilon}$, define \begin{equation}
\mathcal{P}(\tilde{\epsilon},r) = \left\{P_\theta: \norm{\theta}_2 \geq \tilde{\epsilon}(r) \textand \norm{\theta-\hat{\theta}_i}_2 \leq r_i \for i \in [m]\right\},
\end{equation} and let $\mathcal{P}(\tilde{\epsilon})$ denote the set of distributions under the alternative hypothesis \begin{equation}\label{eq:P_epsilon_tilde_orthogonal_predictions}
\mathcal{P}(\tilde{\epsilon}) = \bigcup_{r \in \R^m_+} \mathcal{P}(\tilde{\epsilon},r).
\end{equation} For any $\tilde{\epsilon}$ separation curve that satisfies Property \ref{property:separation_curve_orthogonal_predictions}, we need to show that if \begin{equation}
\exists r\in \R^m_+ \st \tilde{\epsilon}(r) \leq C\cdot \epsilon_*(r) \implies R_*(\mathcal{P}(\tilde{\epsilon}))>\beta.
\end{equation} We prove this statement using the contrapositive. For any $\tilde{\epsilon}$ separation curve that satisfies Property \ref{property:separation_curve_orthogonal_predictions}, we show that if \begin{equation}
R_*(\mathcal{P}(\tilde{\epsilon}))\leq \beta \implies \forall r\in \R^m_+,\ \tilde{\epsilon}(r) \geq C\cdot \epsilon_*(r).
\end{equation}

Consider a separation curve $\tilde{\epsilon}$ that satisfies Property \ref{property:separation_curve_orthogonal_predictions} and $R_*(\mathcal{P}(\tilde{\epsilon}))\leq \beta$. Fix $r \in \R_+^m$. Then by Lemma \ref{lemma:point_lb_union_multiple_orthogonal_gaussian_seq_l2_testing}, for $\epsilon=\tilde{\epsilon}(r)$ it holds that \begin{equation}\label{eq:lb_condition_1}
n \geq C \cdot \left(\frac{\sqrt{d}}{\epsilon^2} \wedge \frac{\sqrt{m}}{\gamma_*^2(\epsilon,r)}\right) \textand \gamma_*(\epsilon,r) \leq \epsilon
\end{equation} and $C$ is a positive constant that depends only on $\alpha$ and $\beta$.  Since $S_\epsilon \cap B_r \neq \emptyset \iff \gamma_*(\epsilon,r) \leq \epsilon$, \eqref{eq:lb_condition_1} is equivalent to \begin{equation}\label{eq:lb_condition_2}
n \geq C \cdot \left(\frac{\sqrt{d}}{\epsilon^2} \wedge \frac{\sqrt{m}}{\gamma_*^2(\epsilon,r)}\right) \textand S_\epsilon \cap B_r \neq \emptyset.
\end{equation} By definition of $\epsilon_*(r)$, see \eqref{eq:epsilon_star_orthogonal_predictions_lowerbound}, it holds that \begin{equation} \label{eq:change_constant_orthogonal_2}
\epsilon_*(r) = \inf\left\{\epsilon\geq 0: n \geq C \cdot \left(\frac{\sqrt{d}}{\epsilon^2} \wedge \frac{\sqrt{m}}{\gamma_*^2(\epsilon,r)}\right) \textand S_\epsilon \cap B_r \neq \emptyset\right\}.
\end{equation} where we used the fact that $S_\epsilon \cap B_r \not= \emptyset \iff \gamma_*(\epsilon,r) \leq \epsilon$. Thus, it must hold that $
\tilde{\epsilon}(r) = \epsilon \geq  \epsilon_*(r).$ Since $r \in \R_+^m$ in the previous argument was arbitrary, the argument is valid for all $r \in \R_+^m$, which concludes the proof.
\end{proof}

Before proving Lemma \ref{lemma:point_lb_union_multiple_orthogonal_gaussian_seq_l2_testing}, we prove a looser result to warm up and get a simplified idea of the lower bound construction. First, we introduce a looser version of Property \ref{property:separation_curve_orthogonal_predictions}: \begin{property}[Same separation for equivalent accuracy vectors]\label{property:separation_curve_orthogonal_predictions_loose}
Let $\gamma(\epsilon,r)=\norm{A(\epsilon,r)}_2$. If $\tilde{\epsilon}(r) = \epsilon$ for  $r\in \R_+^m$, then  $\tilde{\epsilon}(r') = \epsilon$ for all $r' \in \R_+^m$ such that $\gamma(\epsilon,r)=\gamma(\epsilon,r')$.\end{property}

Note that the only difference between the sharp result of Lemma \ref{lemma:point_lb_union_multiple_orthogonal_gaussian_seq_l2_testing} and the looser result of Lemma \ref{lemma:alt_point_lb_union_multiple_orthogonal_gaussian_seq_l2_testing} is that the former uses $\norm{A_+(\epsilon,r)}_2$ in the rates while the latter uses the sub-optimal choice of $\norm{A(\epsilon,r)}_2$.

\begin{lemma}\label{lemma:alt_point_lb_union_multiple_orthogonal_gaussian_seq_l2_testing} For $m\leq d/2$, given separation curve $\tilde{\epsilon}$ with Property \ref{property:separation_curve_orthogonal_predictions_loose} and $r \in \R^m_+$, it holds that if \begin{equation}
n < C \cdot \left(\frac{\sqrt{m}}{\gamma^2(\epsilon,r)}\wedge \frac{\sqrt{d}}{\epsilon^2}\right) \textand \gamma(\epsilon,r)\leq \epsilon 
\end{equation} where $\epsilon = \tilde{\epsilon}(r)$, $\gamma(\epsilon,r)=\norm{A(\epsilon,r)}_2$,
and $C$ is a positive constant that depends on $\alpha$ and $\beta$, then the minimax risk \eqref{eq:minimax_risk_definition} over $\mathcal{P}(\tilde{\epsilon})$ \eqref{eq:P_epsilon_tilde_orthogonal_predictions} is not controlled:
$R_*(\mathcal{P}(\tilde{\epsilon})) > \beta$.
\end{lemma}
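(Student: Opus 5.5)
The plan is to use Le Cam's two fuzzy hypotheses method (Proposition \ref{prop:two_fuzzy_hypotheses}) with a prior supported on $\mathcal{P}(\tilde{\epsilon})$ that uses many accuracy vectors at once, and then to bound its $\chi^2$ divergence via Proposition \ref{prop:IngsterSuslina}. Since the predictions are orthogonal, I would take $\hat{U}$ to have orthonormal columns and write $\theta = \hat{U}x + \hat{U}^\perp z$. Then $\norm{\theta-\hat{\theta}_i}_2\le r_i$ together with $\norm{\theta}_2=\epsilon$ is equivalent to $x_i \ge A_i(\epsilon,r)$, and the coordinates of $A(\epsilon,r)$ range over $\R$ as $r_i$ ranges over $\R_+$.

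\textbf{Construction of the prior.} Set $\gamma=\gamma(\epsilon,r)$ and $\epsilon=\tilde{\epsilon}(r)$. Draw $x=\gamma\, y$ with $y\sim\text{Uniform}(S_2^{m-1})$, which corresponds to the loose alternative \eqref{eq:worst_loose_alternative}. Independently draw $z$ uniform on the sphere of radius $\sqrt{\epsilon^2-\gamma^2}$ in $\Span \hat{U}^\perp$, which requires $\gamma\le\epsilon$. For each realization define $r'$ by $A(\epsilon,r') = x$, that is, $r_i'^2 = \epsilon^2+h_i^2-2h_ix_i$. This is nonnegative because $|x_i|\le\epsilon$ gives $\epsilon^2+h_i^2-2h_ix_i\ge(\epsilon-h_i)^2$. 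Then $\gamma(\epsilon,r')=\norm{x}_2=\gamma=\gamma(\epsilon,r)$, so Property \ref{property:separation_curve_orthogonal_predictions_loose} gives $\tilde{\epsilon}(r')=\epsilon$. Combined with $\norm{\theta}_2=\epsilon$ and $\theta\in B_{r'}$, this places $P_\theta\in\mathcal{P}(\tilde{\epsilon},r')\subseteq\mathcal{P}(\tilde{\epsilon})$, so the prior is supported on the alternative. This step is where the property is essential: each $\theta$ in the support is assigned its own accuracy vector, and the property guarantees that the separation stays equal to $\epsilon$.

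\textbf{$\chi^2$ bound.} By Proposition \ref{prop:IngsterSuslina},
$$1+\chi^2 = E\exp(n\gamma^2\inner{y}{y'})\cdot E\exp(n\inner{z}{z'}).$$
The second factor is bounded exactly as in the proof of Lemma \ref{lemma:lowerbound_oracle_test_arbitrary_predictions}, using sub-Gaussianity of the uniform measure on the sphere of dimension $d-m\ge d/2$. It is at most $\exp(C_0 n^2\epsilon^4/d)$. For the first factor, $\inner{y}{y'}$ is $(C_0/m)$-sub-Gaussian, so it is at most $\exp(C_0 n^2\gamma^4/m)$. Both factors are therefore bounded by an arbitrary $1+\eta$ once
$$n\le c\left(\frac{\sqrt{m}}{\gamma^2}\wedge\frac{\sqrt{d}}{\epsilon^2}\right).$$
Proposition \ref{prop:two_fuzzy_hypotheses} then yields $R_*(\mathcal{P}(\tilde{\epsilon}))>\beta$.

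\textbf{Main obstacle.} The delicate step is checking that the reparametrization $x\mapsto r'$ is legitimate, namely that every point of the randomized support genuinely lies in some $\mathcal{P}(\tilde{\epsilon},r')$ with the same $\epsilon$. I would handle this with the explicit formula for $r'$ and the bound $|x_i|\le\norm{x}_2\le\epsilon$ noted above. A secondary issue is that $\norm{A(\epsilon,r)}_2$ counts negative coordinates, which is why the resulting bound involves $\gamma$ rather than $\gamma_*$ and is only loose. That looseness is acceptable for this warm-up lemma.
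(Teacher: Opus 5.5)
Your proposal is correct and follows the paper's proof: the same prior (a direction uniform on the sphere in the predictions' span scaled by $\gamma$, plus an independent uniform point on the sphere of radius $\sqrt{\epsilon^2-\gamma^2}$ in the orthogonal complement), the same reparametrization $r_i'^2=\epsilon^2+h_i^2-2h_i x_i$ combined with Property \ref{property:separation_curve_orthogonal_predictions_loose} to place the prior in the alternative, and the same sub-Gaussian bounds on the two Ingster--Suslina factors. One small imprecision: the coordinates of $A(\epsilon,r)$ do not range over all of $\R$, since they are capped at $(\epsilon^2+h_i^2)/(2h_i)\geq\epsilon$; your explicit check $|x_i|\leq\epsilon$ is exactly what is needed and makes the argument go through.
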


\begin{proof}[Proof of Lemma \ref{lemma:alt_point_lb_union_multiple_orthogonal_gaussian_seq_l2_testing}] Since the predictions are mutually orthogonal, $\Rank \hat{U} = k = m$. Thus, $k$ and $m$ are equal. Throughout the proof, we use $k$ rather than $m$. Henceforth, fix $r_* \in \R_+^k$ and let \begin{equation}
\epsilon = \tilde{\epsilon}(r_*) \textand \gamma = \gamma(\epsilon,r_*) = \norm{A(\epsilon,r_*)}_2.
\end{equation}

\par\noindent\textbf{Constructing accuracy vectors that match the desired norm.} Given $y \in S_2^{k-1}(0,1)$, $\gamma$ and $\epsilon$, define $k$-dimensional vector $r(\epsilon,\gamma,y)$ as follows \begin{equation}
r_i = \sqrt{\epsilon^2 + h_i^2 - 2h_i y_i \gamma} \for i \in [k].
\end{equation} If $0 \leq \gamma \leq \epsilon$, it follows that \begin{equation}
\epsilon^2 + h_i^2 - 2h_i y_i \gamma \geq (\epsilon-h_i)^2 \geq 0 \for i \in [k],
\end{equation} which guarantees that $r(\epsilon,\gamma,y) \in \R^k_+$. Furthermore, it holds that \begin{equation}
A(\epsilon ,r) = \gamma \cdot y \textand \norm{A(\epsilon,r)}_2 = \gamma \for r=r(\epsilon,\gamma,y).
\end{equation}

\noindent\textbf{Constructing distribution supported on the alternative hypothesis.} Recall that $\hat{U}$ spans a $k$-dimensional subspace. Henceforth, let $\hat{U}^\perp$ be a basis for its orthogonal complement, a $(d-k)$-dimensional subspace. Let $\pi$ be the distribution of \begin{equation}\label{eq:alt_union_multiple_orthogonal_pi_definition}
\theta = \hat{U}A(\epsilon ,r)  + \hat{U}^\perp z \cdot \frac{\rho(\gamma)}{\sqrt{d-k}}
\end{equation} where $r=r\left(\epsilon,\gamma,\frac{y}{\sqrt{k}}\right)$, $\rho(\gamma) = \sqrt{\epsilon^2-\gamma^2}$ and \begin{equation}
y \sim \text{Uniform}\left(\sqrt{k} \cdot S_2^{k-1}\right), z \sim \text{Uniform}\left(\sqrt{d-k} \cdot S_2^{d-k-1}\right) \textand y \perp\!\!\!\perp z.
\end{equation} By definition of $\pi$ it holds that \begin{equation}
\forall \theta \in \supp \pi, \exists y \in S_2^{k-1} \st \hat{U}^T\theta = A(\ \epsilon\ ,\ r(\epsilon,\gamma,y) \ ) \textand \norm{\theta}_2=\epsilon,
\end{equation} thus \begin{equation}
\forall \theta \in \supp \pi, \exists y \in S_2^{k-1} \st \norm{\theta}_2=\epsilon \textand \norm{\theta-\hat{\theta}_i}_2\leq r_i(\epsilon,\gamma,y) \for i \in [k].
\end{equation} Finally, Property \ref{property:separation_curve_orthogonal_predictions_loose} implies that $\pi$ is supported on the alternative hypothesis.

\noindent\textbf{Chi-squared distance between null hypothesis and mixture.} By Proposition \ref{prop:IngsterSuslina} and  \eqref{eq:alt_union_multiple_orthogonal_pi_definition}, it holds that \begin{equation}
1 + \chi^2\left(P_\pi,P_0\right) = E_{\theta,\theta'\iid \pi}\exp{\left(n \cdot \inner{\theta}{\theta'}\right)}
\end{equation} where \begin{equation}
\inner{\theta}{\theta'} = \frac{\gamma^2}{k} \inner{y}{y'} + \frac{\rho^2(\gamma)}{d-k} \cdot \inner{z}{z'}.
\end{equation} Hence, we have that \begin{equation}
1 + \chi^2\left(P_\pi,P_0\right) = E_{y,y'}\exp\left(\frac{n\gamma^2}{k} \cdot \inner{y}{y'} \right) \cdot E_{z,z'}\exp\left(\frac{n\rho^2(\gamma)}{d-k} \cdot \inner{z}{z'} \right).
\end{equation} Since the distributions of $y$ and $z$ are $C_0$-sub-Gaussian, where $C_0$ is a positive constant that does not depend on $d$, it follows that \begin{equation}
1 + \chi^2\left(P_\pi,P_0\right) \leq E_{y'}\exp\left(\frac{C^2_0}{2}\cdot \frac{n^2}{k^2} \cdot \gamma^4 \cdot \norm{y'}_2^2 \right) \cdot E_{z'}\exp\left(\frac{C^2_0}{2}\cdot \frac{n^2}{(d-k)^2} \cdot \rho^4(\gamma) \cdot \norm{z'}_2^2 \right)
\end{equation} where $\norm{y'}_2^2=k$ and $\norm{z'}_2^2=d-k$. Thus, for any $\eta>0$, there exists $C_\eta$ such that $ \chi^2\left(P_\pi,P_0\right) \leq \eta$, whenever \begin{equation}
n^2 \cdot \frac{\gamma^4}{k} + \frac{n^2}{d-k}\cdot \rho^4(\gamma) \leq C_\eta
\end{equation} where $C_\eta$ is a positive constant that depends only on $\eta$. Thus, by Proposition \ref{prop:two_fuzzy_hypotheses}, it follows that there exists a positive constant $C$ that depends only on $\alpha$ and $\beta$ such that whenever \begin{equation}
n < C \cdot \min\left(\frac{\sqrt{k}}{\gamma^2},\frac{\sqrt{d-k}}{\epsilon^2-\gamma^2}\right) \textand \gamma^2 \leq \epsilon^2,
\end{equation} it follows that $
R_*(\mathcal{P}(\tilde{\epsilon})) > \beta$. The statement of the lemma follows by noting that for $k \leq d/2$, it holds that \begin{equation}
\min\left(\frac{\sqrt{k}}{\gamma^2},\frac{\sqrt{d-k}}{\epsilon^2-\gamma^2}\right) \geq \tilde{C} \cdot \min\left(\frac{\sqrt{k}}{\gamma^2},\frac{\sqrt{d}}{\epsilon^2}\right) %\begin{dcases}
% \sqrt{d}\cdot \epsilon^{-2} &\textif \gamma^{2} \leq \epsilon^2 \cdot c_k\\
% \sqrt{k} \cdot \gamma^{-2} &\textif \epsilon^2 \cdot c_k \leq \gamma^{2} \leq \epsilon^2
% \end{dcases}
\end{equation} where $\tilde{C}$ is a positive constant.
\end{proof}

We are ready to improve the lower bound argument by proving Lemma \ref{lemma:point_lb_union_multiple_orthogonal_gaussian_seq_l2_testing}. The construction in Lemma \ref{lemma:alt_point_lb_union_multiple_orthogonal_gaussian_seq_l2_testing} is loose because for some $\theta$ in the support of $\pi$, some of the entries of $\hat{U}^T\theta$ are allowed to be negative. Thus, we should restrict the support of $\pi$ so that  $\hat{U}^T\theta \geq 0$ element-wise. A natural idea is to restrict $y$ in \eqref{eq:alt_union_multiple_orthogonal_pi_definition} to belong to the positive orthant, but doing so would destroy the sub-Gaussian property on which the lemma relies. This is because $y \sim \text{Uniform}(\{x\in \R^m \st \norm{x}=\sqrt{m} \textand x \geq 0\})$ is $C$-sub-Gaussian but $C$ depends on the dimension $m$.

To correct this issue in the proof of Lemma \ref{lemma:lowerbound_adaptive_test_orthogonal_predictions}, we will construct the support of $\pi$ using \textit{sparse} $y$ vectors that belong to the positive orthant. We will use the standard fact about the hypergeometric and binomial distributions. \begin{proposition}[Convex ordering of hypergeometric and binomial random variables]\label{prop:convex_ordering}
For $X \sim \text{Hypergeometric}(N,k,n)$ and $Y \sim \text{Binomial}(n,\frac{k}{N})$, it holds that $E[f(X)]\leq E[f(Y)]$ for every convex function $f$. Consequently, for every $\lambda\geq 0$ it holds that \begin{equation}
E[\exp(\lambda X)] \leq E[\exp(\lambda Y)] = \left(1-\frac{k}{N}+\frac{k}{N}\exp(\lambda)\right)^n \leq \exp\left(\frac{k\cdot n}{N}(e^\lambda-1)\right)
\end{equation}
\end{proposition}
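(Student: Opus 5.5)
We need to prove the convex ordering of the hypergeometric and binomial distributions, and then the exponential moment bound.

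The plan is to realize both random variables on a common probability space, so that the hypergeometric variable is a conditional expectation of the binomial one; Jensen's inequality then gives the convex ordering. This is Hoeffding's classical 1963 argument. Consider an urn of $N$ balls, $k$ of them marked, and write $c_j\in\{0,1\}$ for the indicator that ball $j$ is marked. Let $I_1,\dots,I_n$ be i.i.d. uniform draws from $[N]$ (with replacement) and set $Y=\sum_{t=1}^n c_{I_t}$, so that $Y\sim\text{Binomial}(n,k/N)$. Sampling without replacement gives $X=\sum_{t=1}^n c_{J_t}$ for a uniformly random ordered $n$-subset $(J_1,\dots,J_n)$ of distinct indices.

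The key step is to exhibit $X$ in the form $E[\,\cdot\,\mid Y\text{-related }\sigma\text{-field}]$. Concretely, I would show that for every convex $f$ one has $E[f(Y)]\geq E[f(X)]$ by writing $E[f(Y)]=E[\,g(\text{set of distinct indices drawn})\,]$. Conditional on the multiset pattern of the draws, I would use symmetry: a with-replacement sample with $\ell$ distinct values can be expressed as an average over injective samples. Hoeffding's version avoids this bookkeeping by proving directly that $E[f(\sum_t c_{I_t})]$ is an average of $E[f(\sum_t a_t)]$ over vectors $(a_t)$ that are, by exchangeability, convex combinations of without-replacement sums with the same mean. I will follow Hoeffding's Theorem 4 and cite it, since this is a standard fact (the paper itself calls it standard). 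This combinatorial symmetrization is the main obstacle if done from scratch. The cleanest self-contained route I would try first is the martingale coupling. Draw the without-replacement sequence first. Then build the with-replacement sequence so that each $I_t$ either repeats an earlier index (with the appropriate probability) or takes the next fresh index $J_\cdot$. One then checks that $E[Y\mid X\text{-filtration}]$ has the right form; more simply, $X=E[Y\mid \mathcal{G}]$ for a suitable $\mathcal{G}$, after which Jensen applies.

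Given convex ordering, the remaining steps are routine. Take $f(x)=e^{\lambda x}$, which is convex for $\lambda\ge0$; this yields $E[e^{\lambda X}]\le E[e^{\lambda Y}]$. The binomial moment generating function factorizes over $n$ i.i.d. Bernoulli$(k/N)$ terms, giving $(1-\tfrac kN+\tfrac kN e^\lambda)^n=(1+\tfrac kN(e^\lambda-1))^n$. Finally, $1+u\le e^u$ with $u=\tfrac kN(e^\lambda-1)\ge 0$ gives the bound $\exp(\tfrac{kn}{N}(e^\lambda-1))$.
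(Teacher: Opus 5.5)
Your proposal is correct and matches the paper. The paper gives no proof: it invokes the convex ordering as a standard fact (Hoeffding 1963, Theorem 4), and its displayed consequences follow exactly as you derive them, from $f(x)=e^{\lambda x}$, the Bernoulli moment generating function, and $1+u\le e^u$. If you do write out your coupling, condition on the unordered set $\{J_1,\dots,J_n\}$ rather than the ordered draws $(J_1,\dots,J_n)$, because earlier fresh indices have larger expected multiplicity; given the unordered set, the order is uniform and independent of the multiplicity pattern, which yields $E[Y\mid\mathcal{G}]=X$.
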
 In the following, we prove Lemma \ref{lemma:point_lb_union_multiple_orthogonal_gaussian_seq_l2_testing}.

\begin{lemma}\label{lemma:point_lb_union_multiple_orthogonal_gaussian_seq_l2_testing} For $m\leq d/2$, given separation curve $\tilde{\epsilon}$ with Property \ref{property:separation_curve_orthogonal_predictions} and $r \in \R^m_+$, it holds that if \begin{equation}
n < C \cdot \left(\frac{\sqrt{d}}{\epsilon^2} \wedge \frac{\sqrt{m}}{\gamma_*^2(\epsilon,r)}\right) \textand \gamma_*(\epsilon,r) \leq \epsilon, 
\end{equation}where $\epsilon = \tilde{\epsilon}(r)$, $\gamma_*(\epsilon,r)=\norm{A_+(\epsilon,r)}_2$, and $C$ is a positive constant that depends on $\alpha$ and $\beta$, then the minimax risk \eqref{eq:minimax_risk_definition} over $\mathcal{P}(\tilde{\epsilon})$ \eqref{eq:P_epsilon_tilde_orthogonal_predictions} is not controlled: $
R_*(\mathcal{P}(\tilde{\epsilon})) > \beta.$\end{lemma}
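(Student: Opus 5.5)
Following the strategy of Lemma \ref{lemma:alt_point_lb_union_multiple_orthogonal_gaussian_seq_l2_testing}, I would build a prior $\pi$ on the alternative and bound $\chi^2(P_\pi,P_0)$ with Proposition \ref{prop:IngsterSuslina}. The one change is that the random direction inside the prediction span is taken to be sparse and nonnegative, as in \eqref{eq:worst_alternative}, rather than uniform on the sphere. As before $k=m$. Fix $r_*$, and set $\epsilon=\tilde{\epsilon}(r_*)$ and $\gamma=\gamma_*(\epsilon,r_*)=\norm{A_+(\epsilon,r_*)}_2\leq\epsilon$. Let $s=\lceil\sqrt{k}\rceil$ and let $\mathcal{S}$ be the collection of subsets of $[k]$ of size $s$.

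\textbf{Accuracy vectors and support.} For $S\in\mathcal{S}$ let $y_S=\mathbb{1}_S/\sqrt{s}\in S_2^{k-1}$, which is nonnegative. Define $r_S=r(\epsilon,\gamma,y_S)$ by $r_{S,i}^2=\epsilon^2+h_i^2-2h_i\gamma[y_S]_i$. Nonnegativity follows exactly as before, since $\gamma\leq\epsilon$. We then have $A(\epsilon,r_S)=\gamma y_S\geq 0$, so $A_+(\epsilon,r_S)=\gamma y_S$ and $\gamma_*(\epsilon,r_S)=\gamma=\gamma_*(\epsilon,r_*)$. Here we use that orthogonal predictions give $\gamma_*=\norm{A_+}_2$, which is where nonnegativity matters. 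Property \ref{property:separation_curve_orthogonal_predictions} then gives $\tilde{\epsilon}(r_S)=\epsilon$.

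Let $\theta=\gamma\hat{U}y_S+\hat{U}^\perp z\,\rho/\sqrt{d-k}$, where $\rho=\sqrt{\epsilon^2-\gamma^2}$, $S\sim\mathrm{Uniform}(\mathcal{S})$ and $z\sim\mathrm{Uniform}(\sqrt{d-k}\,S_2^{d-k-1})$ are independent. Then $\norm{\theta}_2=\epsilon$ and $\hat{U}^T\theta=A(\epsilon,r_S)$. By the equivalence used in Lemma \ref{lemma:alt_point_lb_union_multiple_orthogonal_gaussian_seq_l2_testing}, $\theta\in\mathcal{P}(\tilde{\epsilon},r_S)\subseteq\mathcal{P}(\tilde{\epsilon})$.

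\textbf{Chi-squared computation.} The cross term is
\begin{equation}
\inner{\theta}{\theta'}=\frac{\gamma^2}{s}\,|S\cap S'|+\frac{\rho^2}{d-k}\inner{z}{z'}.
\end{equation}
The $z$ factor is handled by sub-Gaussianity exactly as before. It is at most $\exp(C n^2\epsilon^4/(d-k))$, which is bounded because $k\leq d/2$ and $n\lesssim\sqrt{d}/\epsilon^2$.

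For the $S$ factor, note that $|S\cap S'|\sim\mathrm{Hypergeometric}(k,s,s)$. Proposition \ref{prop:convex_ordering} with $\lambda=n\gamma^2/s$ gives
\begin{equation}
E\exp\!\left(\frac{n\gamma^2}{s}|S\cap S'|\right)\leq\exp\!\left(\frac{s^2}{k}\left(e^{n\gamma^2/s}-1\right)\right).
\end{equation}
If $n\gamma^2\leq c\sqrt{k}$ with $c$ small, then $\lambda\leq c'$, so $e^\lambda-1\leq 2\lambda$. The exponent is then at most $2s\,n\gamma^2/k\lesssim c$, since $s\asymp\sqrt{k}$ and $n\gamma^2\lesssim\sqrt{k}$.

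This step is the crux of the argument. The uniform distribution on the positive orthant fails because its projections are too diffuse, whereas the sparse supports of size $\sqrt{k}$ keep the overlap Poisson-like with mean $s^2/k\asymp 1$. The only delicate points are the ceiling in $s$, which costs constants, and the case of small $k$, where $s$ is comparable to $k$ but the bound still reads $O(n\gamma^2)$.

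\textbf{Conclusion.} Combining the two factors, $\chi^2(P_\pi,P_0)\leq\eta$ whenever
\begin{equation}
n<C\left(\frac{\sqrt{m}}{\gamma_*^2(\epsilon,r)}\wedge\frac{\sqrt{d}}{\epsilon^2}\right).
\end{equation}
Proposition \ref{prop:two_fuzzy_hypotheses} then gives $R_*(\mathcal{P}(\tilde{\epsilon}))>\beta$.
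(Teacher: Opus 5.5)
Your proposal is correct and follows the paper's proof essentially step for step. It uses the same sparse nonnegative prior on size-$\sqrt{k}$ subsets with accuracy vectors $r(\epsilon,\gamma,S)$, the same hypergeometric-to-binomial convex ordering (Proposition \ref{prop:convex_ordering}) for the $|S\cap S'|$ factor, and the same sub-Gaussian bound for the orthogonal-complement factor. Your choice $s=\lceil\sqrt{k}\rceil$ and the explicit step $e^{\lambda}-1\leq 2\lambda$ only spell out the integrality and small-exponent details that the paper's choice $M=\sqrt{k}$ leaves implicit.
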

\begin{proof} Since the predictions are mutually orthogonal, $\Rank \hat{U} = k = m$. Thus, $k$ and $m$ are equal. Throughout the proof, we use $k$ rather than $m$. Henceforth, fix $r_* \in \R^k_+$, and let \begin{equation}
\epsilon = \tilde{\epsilon}(r_*) \textand \gamma = \gamma_*(\epsilon,r_*) = \norm{A_+(\epsilon,r_*)}_2.
\end{equation}

\par\noindent\textbf{Constructing accuracy vectors that match the desired norm.} Given a subset $S \subseteq \{1,\dots,k\}$ of size $M \in [k]$,  $\gamma$ and $\epsilon$, define the $k$-dimensional vector $r(\epsilon,\gamma,S)$ as follows \begin{equation}
r_i = \sqrt{\epsilon^2 + h_i^2 - 2h_i\gamma\cdot \frac{I(i \in S)}{\sqrt{M}}} \for i \in [k].
\end{equation} If $0 \leq \gamma \leq \epsilon$, it follows that \begin{equation}
\epsilon^2 + h_i^2 - 2h_i \gamma \cdot \frac{I(i \in S)}{\sqrt{M}} \geq (\epsilon-h_i)^2 \geq 0 \for i \in [k],
\end{equation} which guarantees that $r(\epsilon,\gamma,S) \in \R^k_+$. Furthermore, it holds that \begin{equation} A_i(\epsilon,r) = \gamma \cdot \frac{I(i\in S)}{\sqrt{M}} \geq 0\textand \norm{A_+(\epsilon,r)}_2 = \gamma \for r=r(\epsilon,\gamma,S).
\end{equation}

\noindent\textbf{Constructing a distribution supported on the alternative hypothesis.} Recall that $\hat{U}$ spans a $k$-dimensional subspace. Henceforth, let $\hat{U}^\perp$ be a basis for its orthogonal complement, a $(d-k)$-dimensional subspace. Furthermore, let $\mathcal S$ denote the collection of all $\binom{k}{M}$ subsets of $[k]$ having size $M$. Let $\pi$ be the distribution of \begin{equation}\label{eq:union_multiple_orthogonal_pi_definition}
\theta = \hat{U}A(\epsilon ,r)  + \hat{U}^\perp z \cdot \frac{\rho(\gamma)}{\sqrt{d-k}}
\end{equation} where $r=r\left(\epsilon,\gamma,S\right)$, $\rho(\gamma) = \sqrt{\epsilon^2-\gamma^2}$, and \begin{equation}
S \sim \text{Uniform}\left(\mathcal{S}\right), z \sim \text{Uniform}\left(\sqrt{d-k} \cdot S_2^{d-k-1}\right) \textand S \perp\!\!\!\perp z.
\end{equation} By definition of $\pi$ it holds that \begin{equation}
\forall \theta \in \supp \pi, \exists S \in \mathcal{S} \st \hat{U}^T\theta = A(\ \epsilon\ ,\ r(\epsilon,\gamma,S) \ ) \textand \norm{\theta}_2=\epsilon,
\end{equation} thus \begin{equation}
\forall \theta \in \supp \pi, \exists S \in \mathcal{S} \st \norm{\theta}_2=\epsilon \textand \norm{\theta-\hat{\theta}_i}_2\leq r_i(\epsilon,\gamma,S) \for i \in [k].
\end{equation} Due to Property \ref{property:separation_curve_orthogonal_predictions}, it holds that $\pi$ is supported on the alternative hypothesis.

\noindent\textbf{Chi-squared distance between null hypothesis and mixture.} By Proposition \ref{prop:IngsterSuslina} and \eqref{eq:union_multiple_orthogonal_pi_definition}, it holds that \begin{equation}
1 + \chi^2\left(P_\pi,P_0\right) = E_{\theta,\theta'\iid \pi}\exp{\left(n \cdot \inner{\theta}{\theta'}\right)}
\end{equation} where \begin{equation}
\inner{\theta}{\theta'} = \frac{\gamma^2}{M}  \cdot |S\cap S'| + \frac{\rho^2(\gamma)}{d-k} \cdot \inner{z}{z'}.
\end{equation} Hence, we have that \begin{equation}\label{eq:chi2}
1 + \chi^2\left(P_\pi,P_0\right) = E_{S,S'}\exp\left(\frac{n\gamma^2}{M} \cdot |S\cap S'| \right) \cdot E_{z,z'}\exp\left(\frac{n\rho^2(\gamma)}{d-k} \cdot \inner{z}{z'} \right).
\end{equation}

\noindent\textbf{Bound for second term in RHS of \eqref{eq:chi2}.} Since the distribution of $z$ is $C_0$-sub-Gaussian, where $C_0$ is a positive constant that does not depend on $d$, it follows that \begin{align}
E_{z,z'}\exp\left(\frac{n\rho^2(\gamma)}{d-k} \cdot \inner{z}{z'} \right) &\leq E_{z'}\exp\left(\frac{C^2_0}{2}\cdot \frac{n^2}{(d-k)^2} \cdot \rho^4(\gamma) \cdot \norm{z'}_2^2 \right)\\ &= \exp\left(\frac{C^2_0}{2}\cdot \frac{n^2}{d-k} \cdot \rho^4(\gamma) \right)
\end{align} where we used the fact that $\norm{z'}_2^2=d-k$ in the last equality.

\noindent\textbf{Bound for first term in RHS of \eqref{eq:chi2}.} Fix $S'$, and note that $|S\cap S'|$ can be rewritten as follows \begin{equation}
|S\cap S'| = \sum_{i \in S'}I(i \in S)
\end{equation} Fix $S'$. Consider an urn containing $k$ balls labeled by $[k]$, with the $M=|S'|$ balls indexed by $S'$ colored red and the remaining $k-M$ balls colored blue. Drawing $M=|S|$ balls uniformly without replacement produces the subset $S$. Therefore, $|S\cap S'|$ is the number of red balls drawn, and hence \begin{equation}
|S\cap S'|\ |S' \sim \text{Hypergeometric}(k,M,M).
\end{equation} Since the RHS does not depend on $S'$, it follows that \begin{equation}
|S\cap S'| \sim \text{Hypergeometric}(k,M,M).
\end{equation} By Proposition \ref{prop:convex_ordering}, it holds that \begin{equation}
E_{S,S'}\exp\left(\frac{n\gamma^2}{M} \cdot |S\cap S'| \right) \leq \exp\left(\frac{M^2}{k}\cdot \left(\exp\left(\frac{n\gamma^2}{M}\right)-1\right)\right)
\end{equation}

\noindent\textbf{Bound for \eqref{eq:chi2}.} Choosing $M=\sqrt{k}$, it follows that \begin{equation}
1 + \chi^2\left(P_\pi,P_0\right) \leq  \exp\left(\exp\left(\frac{n\gamma^2}{\sqrt{k}}\right)-1+\frac{C^2_0}{2}\cdot \frac{n^2}{d-k} \cdot \rho^4(\gamma) \right)
\end{equation} Thus, for any $\eta>0$, there exists $C_\eta$ such that $ \chi^2\left(P_\pi,P_0\right) < \eta$, whenever \begin{equation}
\exp\left(\frac{n\gamma^2}{\sqrt{k}}\right)-1+\frac{C^2_0}{2}\cdot \frac{n^2}{d-k} \cdot \rho^4(\gamma) < C_\eta.
\end{equation} where $C_\eta$ is a positive constant that depends on $\eta$. Thus, by Proposition \ref{prop:two_fuzzy_hypotheses}, it follows that there exists a positive constant $C$ that depends on $\alpha$ and $\beta$ such that whenever \begin{equation}
n < C \cdot \min\left(\frac{\sqrt{k}}{\gamma^2},\frac{\sqrt{d-k}}{\epsilon^2-\gamma^2}\right) \textand \gamma^2 \leq \epsilon^2,
\end{equation} it follows that $
R_*(\mathcal{P}(\tilde{\epsilon})) > \beta$. The statement of the lemma follows by noting that for $k \leq d/2$, it holds that \begin{equation}
\min\left(\frac{\sqrt{k}}{\gamma^2},\frac{\sqrt{d-k}}{\epsilon^2-\gamma^2}\right) \geq \tilde{C} \cdot \min\left(\frac{\sqrt{k}}{\gamma^2},\frac{\sqrt{d}}{\epsilon^2}\right)
\end{equation} where $\tilde{C}$ is a positive constant.
\end{proof}

\section{Adapting to arbitrary predictions}

\subsection{Performance of adaptive tests  (Lemmas \ref{lemma:performance_adaptive_test_with_standard_projection} and \ref{lemma:upperbound_adaptive_test_arbitrary_predictions})}\label{appx:ub_union_multiple_gaussian_seq_l2_testing}

In the following, we provide a proof of Lemma \ref{lemma:upperbound_adaptive_test_arbitrary_predictions}. We omit the proof of Lemma \ref{lemma:performance_adaptive_test_with_standard_projection} since it is analogous.

\begin{proof}[Proof of Lemma \ref{lemma:upperbound_adaptive_test_arbitrary_predictions}]

\noindent\textbf{Distribution of test statistic of the intrinsic projection test \eqref{eq:intrinsic_projection_test}.}
Recall that $\hat{U}$ is the $d\times m$ matrix where the $i$-th column is the prediction $\hat{u}_i$. Let $k=\Rank(\hat{U}) \leq \min(m,d)$. Then by the singular value decomposition we have that \begin{equation}\label{eq:reduced_svd}
\hat{U} = \tilde{U}_k\Lambda_k\tilde{V}_k^T \where \tilde{U}_k \in \R^{d \times k}, \Lambda_k\in \R^{k \times k} \textand \tilde{V}_k \in \R^{m \times k} 
\end{equation} where both $\tilde{U}_k$ and $\tilde{V}_k$ have orthonormal columns, and \begin{equation}
\Lambda_{k} = \text{Diagonal}(\lambda_1,\dots,\lambda_k) \st \lambda_1^2 \geq \lambda_2^2 \geq \dots \geq \lambda_k^2 > 0.
\end{equation} Therefore, it holds that \begin{equation}
T_{\hat{U}}(X)= \frac{n}{\norm{\hat{U}}^2_{\op}} \cdot \norm{\hat{U}^TX}_2^2 = \sum_{i=1}^k \frac{\lambda_i^2}{\lambda_1^2} \cdot \norm{\sqrt{n}\cdot \Pi_{\tilde{u}_i}X}_2^2.
\end{equation} It always holds that \begin{equation}
T_{\hat{U}}(X) \sim \sum_{i=1}^k \frac{\lambda_i^2}{\lambda_1^2} \cdot \chi^2_1\left(n \cdot \norm{\Pi_{\tilde{u}_i}\theta}_2^2\right) \quad \forall \theta \in \R^d
\end{equation} where each term is independent. Thus, the expectation and variance are \begin{align}
\lambda_1^2 \cdot E_{P_\theta}\left[T_{\hat{U}}(X)\right] &=  \norm{\lambda}_2^2 + n \cdot \sum_{i=1}^k \lambda_i^2 \cdot \norm{\Pi_{\tilde{u}_i}\theta}_2^2\\
&=  \norm{\lambda}_2^2 + n \cdot \norm{\Lambda_k \tilde{U}_k^T\theta}_2^2\\
&=  \norm{\lambda}_2^2 + n \cdot \norm{ \hat{U}^T\theta}_2^2, \label{eq:expectation_projection_test}\\
\lambda_1^4 \cdot V_{P_\theta}\left[T_{\hat{U}}(X)\right] &= 2 \cdot \norm{\lambda}_4^4 + 4n \cdot \sum_{i=1}^k \lambda_i^4 \cdot \norm{\Pi_{\tilde{u}_i}\theta}_2^2,\\
&\leq 2 \cdot \norm{\lambda}_4^4 + 4 \cdot \lambda_1^2 \cdot n \cdot \norm{\Lambda_k \tilde{U}_k^T\theta}_2^2\\
&=2 \cdot \norm{\lambda}_4^4 + 4 \cdot \lambda_1^2 \cdot n \cdot \norm{\hat{U}^T\theta}_2^2. \label{eq:variance_projection_test}
\end{align}

\noindent\textbf{Condition for the intrinsic projection test to be powerful.} Henceforth, let $\epsilon(r)$ be an arbitrary separation curve, and let $H_1$ denote the set of distributions under the alternative hypothesis \begin{align}
H_1 = \bigcup_{r \in \R_+^m}H_1(\epsilon(r),r)\where H_1(\epsilon,r) = \left\{\theta \in \R^d: \norm{\theta}_2\geq \epsilon \textand \theta \in B_r\right\}.
\end{align} By \eqref{eq:expectation_projection_test}, \eqref{eq:variance_projection_test}, and Proposition \ref{prop:chebyshev_bound}, it follows that the intrinsic projection test \eqref{eq:intrinsic_projection_test} is powerful whenever \begin{equation}\label{eq:type2_control_cond}
n  \cdot \inf_{\theta \in H_1}\norm{\hat{U}^T\theta}_2^2 \geq C \cdot \left(\norm{\lambda}_4^2 + \lambda_1^2\right)
\end{equation} where $C$ is a constant that only depends on $\alpha$ and $\beta$. Noting that $\norm{\lambda}_4^2 \geq \lambda_1^2$, the above is implied by \begin{equation}
n \cdot \inf_{\theta \in H_1}\norm{\hat{U}^T\theta}_2^2 \geq 2C \cdot \norm{\lambda}_4^2.
\end{equation} Equivalently, the condition can be written as follows \begin{equation}\label{eq:power_condition_for_arbitary_predictions}
n \cdot \inf_{\theta \in H_1}\frac{\norm{\hat{U}^T\theta}_2^2}{\lambda_1^2} \geq 2C \cdot \sqrt{\intrinsic \Sigma^2}\ \text{ since }\frac{\norm{\lambda}_4^2}{\lambda_1^2} =  \sqrt{\frac{\trace \Sigma^2}{\norm{\Sigma^2}_{\op}}} =  \sqrt{\intrinsic \Sigma^2}.
\end{equation}

\noindent\textbf{The minimum intrinsic projection.}  For any $\theta \in H_1$, there exists $r \in \R^m_+$ such that $\theta \in H_1(\epsilon(r),r)$. Since $\theta \in H_1(\epsilon(r),r)$, it holds that $H_1(\epsilon(r),r)\neq\emptyset$ and consequently $B_r \not\subset B_{\epsilon(r)}^\circ$. Therefore, one of two situations must happen: either $B_r \cap S_{\epsilon(r)} \neq \emptyset$, or $B_r \cap S_{\epsilon(r)} = \emptyset$. Consequently,  \begin{equation}\label{eq:minimum_projection}
\inf_{\theta \in H_1(\epsilon(r),r)} \frac{\norm{\hat{U}^T\theta}_2^2}{\lambda_1^2} = \begin{cases}
\tilde{\gamma}^2_*(\epsilon(r),r) &\textif B_r \cap S_{\epsilon(r)} \neq \emptyset\\
\norm{\tilde{\theta}_*(r)}_2^2 &\textif B_r \cap S_{\epsilon(r)} = \emptyset
\end{cases}
\end{equation}

\noindent\textbf{Condition for the aggregated test to be powerful.} Since \begin{equation}
\inf_{\theta \in H_1}\norm{\hat{U}^T\theta}_2^2 = \inf_{r \in \R_+^m}\inf_{\theta \in H_1(\epsilon(r),r)}\norm{\hat{U}^T\theta}_2^2,
\end{equation} by \eqref{eq:power_condition_for_arbitary_predictions} and \eqref{eq:minimum_projection}, the test is powerful whenever \begin{equation}\label{eq:power_condition_for_projection_test_arbitary_predictions}
2C \leq n \cdot \inf_{r \in \R_+^m} \frac{1}{\sqrt{\intrinsic \Sigma^2}} \cdot \begin{cases}
\tilde{\gamma}^2_*(\epsilon(r),r) &\textif B_r \cap S_{\epsilon(r)} \neq \emptyset\\
\norm{\tilde{\theta}_*(r)}_2^2 &\textif B_r \cap S_{\epsilon(r)} = \emptyset
\end{cases}
\end{equation} Furthermore, by Lemma \ref{lemma:chi_squared_test}, we know that the chi-squared test is powerful whenever \begin{equation}\label{eq:power_condition_for_chi_squared_test_arbitary_predictions}
n \cdot \inf_{r \in \R_+^m}\frac{\epsilon^2(r)}{\sqrt{d}} \geq C'
\end{equation} where $C'$ is a positive constant that depends on $\alpha$ and $\beta$. Consequently, by \eqref{eq:power_condition_for_projection_test_arbitary_predictions}, \eqref{eq:power_condition_for_chi_squared_test_arbitary_predictions}, and Proposition \ref{prop:Bonferroni} it holds that the aggregated test \eqref{eq:bonferroni_test_with_intrinsic_projection} is powerful whenever \begin{equation}
2C \lor C' \leq n \cdot \left(\left[\inf_{r \in \R_+^m}\frac{\epsilon^2(r)}{\sqrt{d}} \right] \wedge  \inf_{r \in \R_+^m}\frac{1}{\sqrt{\intrinsic \Sigma^2}} \cdot \begin{cases}
\tilde{\gamma}^2_*(\epsilon(r),r) &\textif B_r \cap S_{\epsilon(r)} \neq \emptyset\\
\norm{\tilde{\theta}_*(r)}_2^2 &\textif B_r \cap S_{\epsilon(r)} = \emptyset
\end{cases}.\right)
\end{equation} which is implied by \begin{equation}
2C \lor C' \leq n \cdot \inf_{r \in \R_+^m}\left(\frac{\epsilon^2(r)}{\sqrt{d}} \wedge  \frac{1}{\sqrt{\intrinsic \Sigma^2}} \cdot \begin{cases}
\tilde{\gamma}^2_*(\epsilon(r),r) &\textif B_r \cap S_{\epsilon(r)} \neq \emptyset\\
\norm{\tilde{\theta}_*(r)}_2^2 &\textif B_r \cap S_{\epsilon(r)} = \emptyset
\end{cases}.\right)
\end{equation} The lemma is proved by noting that $\epsilon(r) = \epsilon_*(r)$, see \eqref{eq:epsilon_star_arbitary_predictions}, satisfies the above condition.

\end{proof}

\subsection{Lower bound on the performance of adaptive tests (Lemma \ref{lemma:lb_union_multiple_gaussian_seq_l2_testing})}\label{appx:lb_union_multiple_gaussian_seq_l2_testing}

The proof of Lemma \ref{lemma:lb_union_multiple_gaussian_seq_l2_testing} follows by an argument analogous to that used for Lemma \ref{lemma:lowerbound_adaptive_test_orthogonal_predictions}.

\begin{proof}[Proof of Lemma \ref{lemma:lb_union_multiple_gaussian_seq_l2_testing}]

Given any separation curve $\tilde{\epsilon}$, let \begin{equation}
\mathcal{P}(\tilde{\epsilon},r) = \left\{P_\theta: \norm{\theta}_2 \geq \tilde{\epsilon}(r) \textand \norm{\theta-\hat{\theta}_i}_2 \leq r_i \for i \in [m]\right\},
\end{equation} and \begin{equation}\label{eq:P_epsilon_tilde_arbitary_predictions}
\mathcal{P}(\tilde{\epsilon}) = \bigcup_{r \in \R^m_+} \mathcal{P}(\tilde{\epsilon},r).
\end{equation} For any $\tilde{\epsilon}$ separation curve that satisfies Property \ref{property:separation_curve_arbitary_predictions}, we need to show that if \begin{equation}
\exists r\in \R^m_+ \st \tilde{\epsilon}(r) \leq C\cdot \tilde{\epsilon}_*(r) \implies R_*(\mathcal{P}(\tilde{\epsilon}))>\beta.
\end{equation} We prove this statement using the contrapositive. For any $r\mapsto \tilde{\epsilon}(r)$ separation curve that satisfies Property \ref{property:separation_curve_arbitary_predictions}, we show that if \begin{equation}
R_*(\mathcal{P}(\tilde{\epsilon}))\leq \beta \implies \forall r\in \R^m_+,\ \tilde{\epsilon}(r) \geq C\cdot \tilde{\epsilon}_*(r).
\end{equation}

Consider a separation curve $\tilde{\epsilon}$ that satisfies Property \ref{property:separation_curve_arbitary_predictions} and $R_*(\mathcal{P}(\tilde{\epsilon}))\leq \beta$. Fix $r \in \R_+^m$; then, by Lemma \ref{lemma:alt_point_lb_union_multiple_gaussian_seq_l2_testing}, for $\epsilon=\tilde{\epsilon}(r)$, it holds that \begin{equation}
n \geq C \cdot \left(\frac{\sqrt{\intrinsic \Sigma^2}}{\tilde{\gamma}^2(\epsilon,r)}\wedge \frac{\sqrt{d}}{\epsilon^2}\right)   \textand \tilde{\gamma}(\epsilon,r) \cdot \sqrt{\frac{m}{\intrinsic \Sigma^2}} \leq \epsilon,
\end{equation} where $C$ is a positive constant that depends only on $\alpha$ and $\beta$. Finally, by definition of $\tilde{\epsilon}_*(r)$, see \eqref{eq:epsilon_star_arbitary_predictions_lowerbound}, it holds that \begin{equation}
\tilde{\epsilon}_*(r) = \inf\left\{\epsilon\geq 0: n \geq C \cdot \left(\frac{\sqrt{\intrinsic \Sigma^2}}{\tilde{\gamma}^2(\epsilon,r)}\wedge \frac{\sqrt{d}}{\epsilon^2}\right)   \textand \tilde{\gamma}(\epsilon,r) \cdot \sqrt{\frac{m}{\intrinsic \Sigma^2}} \leq \epsilon\right\},
\end{equation} Thus, it must hold that $
\tilde{\epsilon}(r) \geq \tilde{\epsilon}_*(r)$.  Since $r \in \R_+^m$ in the previous argument was arbitrary, the argument is valid for all $r \in \R_+^m$, which concludes the proof.
\end{proof}

In the following, we prove the necessary Lemma \ref{lemma:alt_point_lb_union_multiple_gaussian_seq_l2_testing}.

\begin{lemma}\label{lemma:alt_point_lb_union_multiple_gaussian_seq_l2_testing} For $k\leq d/2$, given separation curve $\tilde{\epsilon}$ with Property \ref{property:separation_curve_arbitary_predictions} and $r \in \R^m_+$, it holds that if \begin{align}
n < C \cdot \left(\frac{\sqrt{\intrinsic \Sigma^2}}{\tilde{\gamma}^2(\epsilon,r)}\wedge \frac{\sqrt{d}}{\epsilon^2}\right) \textand \tilde{\gamma}(\epsilon,r) \cdot \sqrt{\frac{m}{\intrinsic \Sigma^2}} \leq \epsilon\end{align}
where $\epsilon = \tilde{\epsilon}(r)$, $\tilde{\gamma}^2(\epsilon,r)=\norm{A(\epsilon,r)}^2_2/\norm{\hat{U}}^2_{\op}$, and $C$ is a positive constant that depends on $\alpha$ and $\beta$, then the minimax risk \eqref{eq:minimax_risk_definition} on $\mathcal{P}(\tilde{\epsilon})$ \eqref{eq:P_epsilon_tilde_arbitary_predictions} is not controlled:
$R_*(\mathcal{P}(\tilde{\epsilon})) > \beta$.
\end{lemma}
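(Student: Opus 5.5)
The plan is to adapt the random-alternative construction from the proof of Lemma \ref{lemma:alt_point_lb_union_multiple_orthogonal_gaussian_seq_l2_testing} to non-orthogonal predictions, randomizing the value of $A(\epsilon,r)$ so that the induced prior has the covariance structure of \citet{laurentNonAsymptoticMinimax2012}. Fix $r_*$, put $\epsilon=\tilde{\epsilon}(r_*)$ and $\tilde{\gamma}=\tilde{\gamma}(\epsilon,r_*)$, and use the reduced SVD $\hat{U}=\tilde{U}_k\Lambda_k\tilde{V}_k^T$. The first step is to build, for each vector $a\in\R^m$ with $\norm{a}_2=\lambda_1\tilde{\gamma}$, the accuracy vector
\begin{equation}
r_i(a)=\sqrt{\epsilon^2+h_i^2-2h_ia_i},
\end{equation}
so that $A(\epsilon,r(a))=a$ and $\tilde{\gamma}(\epsilon,r(a))=\tilde{\gamma}$. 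By Property \ref{property:separation_curve_arbitary_predictions}, $\tilde{\epsilon}(r(a))=\epsilon$. Nonnegativity of $r_i^2$ needs $|a_i|\le\epsilon$, which is harmless since $\norm{a}_2\le\epsilon$ should follow from the side condition.

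Next I would take $\theta=\tilde{U}_kb+\tilde{U}_k^\perp z\,\rho/\sqrt{d-k}$ with $b\in\R^k$ random, $z$ uniform on $\sqrt{d-k}\,S_2^{d-k-1}$, and $\rho^2=\epsilon^2-\norm{b}_2^2$. Then $\hat{U}^T\theta=\tilde{V}_k\Lambda_kb$, and setting $a=\tilde{V}_k\Lambda_kb$ gives $\hat{U}^T\theta=A(\epsilon,r(a))$. This places $\theta$ on the boundary of every ball, so $\theta\in\mathcal{P}(\tilde{\epsilon},r(a))$. Following Laurent et al., I would choose $b_j=\tau\,\xi_j\,\lambda_j^{-1}w_j$ with Rademacher signs $\xi_j$ and weights $w_j=\lambda_j^2/\lambda_1^2$ (up to normalization). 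Then $\norm{\Lambda_kb}_2^2=\tau^2\sum_j w_j^2=\tau^2\intrinsic\Sigma^2$ is deterministic, and $\tau$ is fixed by $\norm{\Lambda_k b}_2=\lambda_1\tilde{\gamma}$. The side condition $\epsilon\ge\tilde{\gamma}\sqrt{m/\intrinsic\Sigma^2}$ is what should guarantee $\norm{b}_2\le\epsilon$, using $\lambda_j\le\lambda_1$ and $\sum_j\lambda_j^2=m$. Checking this inequality with the right weights is fiddly; if it fails I would switch to $b$ uniform on an ellipsoid and bound $\norm{b}_2^2\le\norm{\Lambda_kb}_2^2\cdot\max_j\lambda_j^{-2}$ differently.

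Then I apply Proposition \ref{prop:IngsterSuslina}. Because $\norm{b}$ is deterministic, the cross term is $n\inner{b}{b'}+n\rho^2\inner{z}{z'}/(d-k)$, and the two factors separate. The $z$ factor is handled exactly as before, giving the $\sqrt{d-k}\ge\sqrt{d/2}$ term via $\rho^2\le\epsilon^2$. For the Rademacher factor,
\begin{equation}
E\exp\Big(n\tau^2\sum_j\xi_j\xi_j'w_j^2/\lambda_j^2\Big)\le\exp\Big(\tfrac{n^2\tau^4}{2}\sum_j w_j^4\lambda_j^{-4}\Big).
\end{equation}
The main obstacle is tuning the weights so that this exponent becomes $n^2\tilde{\gamma}^4/\intrinsic\Sigma^2$ up to constants. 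Choosing $b_j\propto\xi_j\lambda_j$, so that $w_j\propto\lambda_j^2$, gives $\inner{b}{b'}=\tau^2\sum_j\lambda_j^2\xi_j\xi_j'$. Its variance is $\tau^4\sum_j\lambda_j^4$, with $\tau^2\sum_j\lambda_j^4=\lambda_1^2\tilde{\gamma}^2$, so the exponent is of order $n^2\lambda_1^4\tilde{\gamma}^4/\sum_j\lambda_j^4=n^2\tilde{\gamma}^4/\intrinsic\Sigma^2$, as desired. With this choice, $\norm{b}_2^2=\tau^2\sum_j\lambda_j^2=\lambda_1^2\tilde{\gamma}^2 m/\sum_j\lambda_j^4\le\tilde{\gamma}^2 m/\intrinsic\Sigma^2$, which matches the side condition exactly. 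Finally, Proposition \ref{prop:two_fuzzy_hypotheses} gives $R_*>\beta$ whenever $n\lesssim\sqrt{\intrinsic\Sigma^2}/\tilde{\gamma}^2\wedge\sqrt{d}/\epsilon^2$.
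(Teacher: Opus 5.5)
Your proposal is correct and follows the paper's proof. Your final prior $b_j=\tau\xi_j\lambda_j$ with $\tau^2=\lambda_1^2\tilde{\gamma}^2/\norm{\lambda}_4^4$ is exactly the paper's $\gamma\Lambda_k^{-1}y_w$ with $y_w=\Lambda_k^2 w/\norm{\lambda}_4^2$, and the accuracy vectors, the $\prod_j\cosh$ bound and the sub-Gaussian treatment of the $z$ factor all match. Your first weighted choice $w_j=\lambda_j^2/\lambda_1^2$ already gives this same prior, so there is no real tuning obstacle, and nonnegativity of $r_i^2(a)$ is automatic because $r_i^2(a)=\norm{\theta-\hat{\theta}_i}_2^2$ for the constructed $\theta$.
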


\begin{proof}[Proof of Lemma \ref{lemma:alt_point_lb_union_multiple_gaussian_seq_l2_testing}]Henceforth, fix $r_* \in \R_+^m$ and let \begin{equation}
\epsilon = \tilde{\epsilon}(r_*) \textand \gamma = \gamma(\epsilon,r_*) = \norm{A(\epsilon,r_*)}_2.
\end{equation}

\par\noindent\textbf{Constructing accuracy vectors that match the desired norm.} Given $y \in S_2^{m-1}$, $\gamma$ and $\epsilon$, define the $m$-dimensional vector $r(\epsilon,\gamma,y)$ as follows \begin{equation}
r_i = \sqrt{\epsilon^2 + h_i^2 - 2h_i y_i\gamma } \for i \in [m].
\end{equation} Since $\max_{i \in [m]}|y_i|\leq 1$, if $0 \leq \gamma \leq \epsilon$, it follows that \begin{equation}
\epsilon^2 + h_i^2 - 2h_i y_i \gamma \geq (\epsilon-h_i)^2 \geq 0 \for i \in [m],
\end{equation} which guarantees that $r(\epsilon,\gamma,y) \in \R^m_+$. Furthermore, it holds that \begin{equation}
A(\epsilon ,r) = \gamma \cdot y \textand \norm{A(\epsilon,r)}_2 = \gamma \for r=r(\epsilon,\gamma,y).
\end{equation}

\noindent\textbf{Constructing distribution supported on the alternative hypothesis.} Recall that $
\hat{U} = \tilde{U}_k\Lambda_k\tilde{V}_k^T$ is the reduced-form singular value decomposition of $\hat{U}$, see \eqref{eq:reduced_svd}, where $\tilde{U}_k$ spans a $k$-dimensional subspace. Henceforth, let $\tilde{U}_k^\perp$ be a basis for its orthogonal complement, a $(d-k)$-dimensional subspace. Let $\pi$ be the distribution of \begin{equation}\label{eq:alt_union_multiple_arbitary_pi_definition}
\theta = \tilde{U}_k\Lambda^{-1}_k\cdot y_w \cdot \gamma   + \tilde{U}_k^\perp z \cdot \frac{\rho(\gamma)}{\sqrt{d-k}}
\end{equation} where \begin{equation}
\rho(\gamma) = \sqrt{\epsilon^2-\norm{\Lambda^{-1}_k y_w}_2^2\cdot \gamma^2} \textand z \sim \text{Uniform}\left(\sqrt{d-k} \cdot S_2^{d-k-1}\right)
\end{equation} and \begin{equation}
y_w = \frac{\Lambda_k^2w}{\norm{\lambda}_4^2},\ w \in \{-1,1\}^k \textand w_i \iid \text{Uniform}\left(\{-1,1\}\right) \for i \in [k].
\end{equation} We require that $\epsilon^2 \geq \norm{\Lambda^{-1}_k y_w}_2^2\cdot \gamma^2$.
Noting that \begin{equation}
\norm{\Lambda^{-1}_k y_w}_2^2 = \frac{\norm{\lambda}_2^2}{\norm{\lambda}_4^4} = \frac{\trace \Sigma}{\trace \Sigma^2}=\frac{m}{\trace \Sigma^2}=\frac{m}{m+\sum_{i\neq j}\inner{\hat{u}_i}{\hat{u}_j}^2},
\end{equation} said condition is satisfied whenever \begin{equation}
\epsilon^2 \geq \frac{m}{\intrinsic \Sigma^2}\cdot \frac{\gamma^2}{\norm{\hat{U}}_{\op}^2}.
\end{equation} By definition of $\pi$ it holds that, for all $\theta \in \supp \pi$,\begin{equation}
\exists w \in \{-1,1\}^k \st \norm{\theta}_2=\epsilon \textand \hat{U}^T\theta = \gamma \cdot \tilde{V}_ky_w = A(\epsilon,r),\end{equation} where $r=r(\epsilon,\gamma,\tilde{V}_k y_w)$ and $\norm{\tilde{V}_k y_w}_2=1$ since $\norm{y_w}_2^2 = 1$ and $\tilde{V}_k$ has orthonormal columns. Thus \begin{equation}
\forall \theta \in \supp \pi, \exists w \in \{-1,1\}^k \st \norm{\theta}_2=\epsilon \textand \norm{\theta-\hat{\theta}_i}_2\leq r_i(\epsilon,\gamma,\tilde{V}_ky_w) \for i \in [m]
\end{equation} Finally, by Property \ref{property:separation_curve_arbitary_predictions}, it holds that $\pi$ is supported on the alternative hypothesis.

\noindent\textbf{Chi-squared distance between null hypothesis and mixture.} By Proposition \ref{prop:IngsterSuslina} and  \eqref{eq:alt_union_multiple_arbitary_pi_definition}, it holds that \begin{equation}
1 + \chi^2\left(P_\pi,P_0\right) = E_{\theta,\theta'\iid \pi}\exp{\left(n \cdot \inner{\theta}{\theta'}\right)}
\end{equation} where \begin{equation}
\inner{\theta}{\theta'} = \frac{\gamma^2}{\norm{\lambda}_4^4} \cdot \sum_{i=1}^k \lambda_i^{2}\cdot w_i w_i' + \frac{\rho^2(\gamma)}{d-k} \cdot \inner{z}{z'}.
\end{equation} Hence, we have that \begin{equation}
1 + \chi^2\left(P_\pi,P_0\right) = E_{w,w'}\exp\left(\frac{n\gamma^2}{\norm{\lambda}_4^4} \cdot \sum_{i=1}^k \lambda_i^{2}\cdot w_i w_i'\right) \cdot E_{z,z'}\exp\left(\frac{n\rho^2(\gamma)}{d-k} \cdot \inner{z}{z'} \right).
\end{equation} Since the distribution of $z$ is $C_0$-sub-Gaussian, where $C_0$ is a positive constant that does not depend on $d$, it follows that \begin{equation}
E_{z,z'}\exp\left(\frac{n\rho^2(\gamma)}{d-k} \cdot \inner{z}{z'} \right) \leq E_{z'}\exp\left(\frac{C^2_0}{2}\cdot \frac{n^2}{(d-k)^2} \cdot \rho^4(\gamma) \cdot \norm{z'}_2^2 \right),
\end{equation} where $\norm{z'}_2^2=d-k$. Furthermore, it holds that \begin{equation}
E_{w,w'}\exp\left(\frac{n\gamma^2}{\norm{\lambda}_4^4} \cdot \sum_{i=1}^k \lambda_i^{2}\cdot w_i w_i'\right) = \prod_{j=1}^k \cosh\left(\frac{n\gamma^2\lambda_j^2}{\norm{\lambda}_4^4}\right) \leq \exp\left(\frac{n^2\gamma^4}{2\norm{\lambda}_4^4}\right).
\end{equation}Thus, for any $\eta>0$, there exists $C_\eta$ such that $ \chi^2\left(P_\pi,P_0\right) \leq \eta$, whenever \begin{equation}
n^2 \cdot \frac{\gamma^4}{\norm{\lambda}_4^4} + \frac{n^2}{d-k}\cdot \rho^4(\gamma) \leq C_\eta,
\end{equation} where $C_\eta$ is a positive constant that depends on $\eta$. Using the assumption that $k \leq d/2$, by Proposition \ref{prop:two_fuzzy_hypotheses}, it follows that there exists a positive constant $C$ that depends on $\alpha$ and $\beta$ such that whenever \begin{equation}
n < C \cdot \min\left(\frac{\norm{\lambda}_4^2}{\gamma^2},\frac{\sqrt{d}}{\epsilon^2}\right),\ \epsilon^2 \geq \frac{m}{\intrinsic \Sigma^2}\cdot \frac{\gamma^2}{\norm{\hat{U}}_{\op}^2},\textand k\leq d/2,
\end{equation} it follows that $
R_*(\mathcal{P}(\tilde{\epsilon})) > \beta$. The statement of the lemma follows by noting that \begin{equation}
\frac{\norm{\lambda}_4^2}{\gamma^2} = \frac{\sqrt{\intrinsic \Sigma^2}}{\gamma^2/\norm{\Sigma}_{\op}} = \frac{\sqrt{\intrinsic \Sigma^2}}{\gamma^2/\norm{\hat{U}}^2_{\op}}  \text{ since } \intrinsic \Sigma^2 = \frac{\trace \Sigma^2}{\norm{\Sigma^2}_{\op}} = \frac{\norm{\lambda}_4^4}{\norm{\Sigma}_{\op}^2}.
\end{equation}
\end{proof}

\pagebreak

\etocdepthtag.toc{mtreferences}
\addcontentsline{toc}{section}{References}
\bibliography{guess}

\end{document}